\documentclass[11pt, a4paper]{amsart}
\usepackage{amsmath}
\usepackage{amssymb}
\setlength{\textwidth}{16cm} \setlength{\textheight}{21.5cm}
\setlength{\oddsidemargin}{0.0cm}
\setlength{\evensidemargin}{0.0cm}

\usepackage{color}
\parskip 4pt

\newtheorem{prop}{Proposition}[section]
\newtheorem{teo}{Theorem}[section]
\newtheorem{lema}{Lemma}[section]
\newtheorem{coro}{Corollary}[section]

\newtheorem{rem}{Remark}[section]

\def\ep{\varepsilon}

\def\a{\mathfrak a}
\def\R{\mathbb R}
\def\K{{\mathcal K}}
\def\A{{\mathcal A}}

\begin{document}

\title[Nonlocal diffusion
with absorption]{Large time  behavior for a nonlocal diffusion equation with
absorption and bounded initial data}

\author[J. Terra \and N. Wolanski]{Joana Terra\and Noemi Wolanski}

\thanks{
\noindent 2000 {\it Mathematics Subject Classification } 35K57,
35B40.}
\keywords{Nonlocal diffusion, Large time behavior.}
\address{Joana Terra\hfill\break\indent
Departamento  de Matem{\'a}tica, FCEyN \hfill\break\indent UBA (1428)
Buenos Aires, Argentina.} \email{{\tt jterra@dm.uba.ar} }

\address{Noemi Wolanski \hfill\break\indent
Departamento  de Matem{\'a}tica, FCEyN \hfill\break\indent UBA (1428)
Buenos Aires, Argentina.} \email{{\tt wolanski@dm.uba.ar} }

\date{}

\begin{abstract}
We study the large time behavior of nonnegative solutions of the
Cauchy problem $u_t=\int J(x-y)(u(y,t)-u(x,t))\,dy-u^p$,
$u(x,0)=u_0(x)\in L^\infty$, where $|x|^{\alpha}u_0(x)\rightarrow A>0$ as
$|x|\rightarrow\infty$. One of our main goals is the study of the critical case
$p=1+2/\alpha$ for $0<\alpha<N$, left open in previous articles, for which we prove that $t^{\alpha/2}|u(x,t)-U(x,t)|\to 0$ where $U$ is  the solution of the heat equation with absorption with  initial
datum $U(x,0)=C_{A,N}|x|^{-\alpha}$. Our proof, involving  sequences of rescalings of the solution, allows us to establish
also the large time behavior of solutions having more general
nonintegrable initial data $u_0$ in the supercritical case
and also
in the critical case ($p=1+2/N$)
for bounded and integrable $u_0$.
\end{abstract}

\maketitle

\date{}

\section{Introduction}
\label{Intro} \setcounter{equation}{0}

Consider the following nonlocal evolution problem with absorption
\begin{equation}\label{problem}
\begin{cases}
u_t =  \int J(x-y)\left(u(y,t)-u(x,t)\right)\,dy-u^p(x,t)& \mbox{in }
\ \R^N\times(0,\infty)\\
u(x,0) =  u_0(x)& \mbox{in }\ \R^N,
\end{cases}
\end{equation}
where $J\in C_0^{\infty}(\R^N)$ is radially symmetric, $J\geq 0$ with
$\int J=1$ and $u_0\ge0$ and bounded.

Equation \eqref{problem} can be seen as a model for the density of a
population at a certain point and given time. In fact, let $u$
represent such density and the kernel $J(x-y)$ represent the
probability distribution density of jumping from a point $x$ to a
point $y$. Then, using the symmetry of the kernel $J$, the diffusion
term $\int J(x-y)(u(y,t)-u(x,t))\,dy$ represents the difference
between the rate at which the population is arriving at the point $x$
and the rate at which it is leaving  $x$. The absorption term
$-u^p$ represents a rate of consumption due to an internal reaction.

This diffusion operator has been used to model several nonlocal
diffusion processes in the last few years. See for instance
\cite{BCh1,BCh2,BFRW,CF,F,Z}. In particular,
nonlocal diffusions are of interest in biological and biomedical
problems. Recently, these kind of nonlocal operators  have also been used  for
image enhancement  \cite{GO}.

\medskip
We are interested in the large time behavior of the solutions to
\eqref{problem} and how the space dimension $N$, the absorption
exponent $p$ and the assumptions on the initial data $u_0$
influence  the result.

These kind of problems have been widely studied for the heat equation with absorption or, more generally, the porous medium equation or other diffusion equations. (See, for instance, \cite{CQW,H,KP,KP2,KU,Za}).

\medskip

In the case of semilinear problems, one possible approach is the direct use of the variations of constants formula associated to the knowledge of a fundamental solution of the purely diffusive linear part.
This approach has been considered for problem \eqref{problem} in
\cite{PR,TW} and it
 allowed to study  the supercritical cases, in which the presence of
 the absorption
term does not influence the asymptotic behavior.

Another possible approach is to consider rescalings that leave the purely diffusive equation unchanged. This is done, for instance, for the porous medium or the evolutionary $p$-laplace equations.

At first sight, this last approach is not possible for \eqref{problem} since the nonlocal diffusion equation is not invariant under any rescaling. Anyhow, it is easy to see that if $u$ is a solution to \begin{equation}\label{equation}
u_t=\int J(x-y)(u(y,t)-u(x,t))\,dy = Lu,
\end{equation}
and, for $k>0$ and $f(k)$ any function of the parameter $k$,
$$
u^k (x,t)=f(k)u(k  x,k ^2 t),
$$
then $u^k $ is a solution to the following equation
\begin{equation}\label{equation-lambda}
v_t=k ^2\int J_k (x-y)\big(v(y,t)-v(x,t)\big)\,dy,
\end{equation}
where $J_k (x)=k ^N J(k  x)$. It is not difficult to prove that for a fixed smooth function $v$, the right hand side in \eqref{equation-lambda} converges, as $k$ goes to infinity, to $\a\,\Delta v$ where $\a$ is a constant that depends only on the kernel $J$ of the nonlocal operator.

This fact has already been used --with $\ep=k ^{-1}\to0$-- in order to prove that the solutions of the rescaled problems set in a fixed bounded domain converge to the solution of the heat equation with diffusivity $\a$. (See, for instance, \cite{CER,CERW2}).

\medskip

Therefore, since $k$ going to infinity  for $u^k(x,1)$ amounts to $t$ going to infinity for $u(x,t)$, it is not at all striking that the asymptotic behavior as $t$ goes to infinity of the solution of \eqref{problem} is the same as that of the solution of the equation obtained by replacing the nonlocal operator by  $\a\,\Delta$, as was proved in \cite{PR} when $u_0\in L^\infty\cap L^1$ and $p>1+2/N$ and in \cite{TW}
when $u_0$ is bounded and $|x|^\alpha u_0(x)\to A>0$ as $|x|\to\infty$ with $0<\alpha\le N$ and $p> 1+2/\alpha$.

Both in \cite{PR} and \cite{TW} no rescaling was
proposed and instead, the variations of constants formula was used.
Nevertheless, such method only led  to the study of the
  supercritical cases.

\medskip

As stated above, the idea of the rescaling method is that the behavior of $u(x,t)$ as $t\to\infty$ is that of $u^k(x,1)$ as $k \to\infty$ (for a suitable choice of $f(k)$).

In order to prove the convergence of the functions $u^k (x,1)$ on
compact sets of $\R^N$,
it is necessary to establish compactness of a family
of uniformly bounded solutions to the  equation satisfied by $u^k$.
 In the case of the heat or the porous medium equations, this compactness follows from their regularizing effect.

\medskip

The purpose of this paper is twofold. On one hand, to establish the
large time behavior in the critical case $p=1+2/\alpha$, $0<\alpha<N$
that was left open in \cite{TW}. On the other hand, to show how to use
the rescaling method in the present situation in which each $u^k $ is
a solution of a different equation.  And moreover, how to obtain, in the present situation, the compactness of the family $\{u^k\}$ from uniform $L^\infty$ bounds.

One very important issue that we had to overcome is the lack of a regularizing effect of
equation \eqref{equation} and its rescalings. In fact,
$u(x,t)$ is exactly as smooth as $u_0(x)$. This problem is overcome by the observation that
the fundamental solution of equation \eqref{equation} can be  decomposed as $e^{-t}\delta+W(x,t)$ with $W(x,t)$ smooth, as proved in \cite{ChChR}.
Then, the variations of constants formula for the rescaled equations allowed us to decompose $u^k(x,t)=v^k(x,t)+h^k(x,t)$ with $v^k\to0$ and $h^k$ smooth. So, the limit as $k\to\infty$ of $u^k(x,t)$ for $t>0$ is that of the smooth functions $h^k$, for which we can prove compactness by establishing uniform Holder estimates.

One of the main contributions of the present paper is a very sharp
estimate on the space-time  behavior of $W(x,t)$ (as well as its space
and time
derivatives). This estimate is invariant under the  rescaling $W_k (x,t)=k ^NW(k  x,k ^2t)$,
thus leading to estimates for the rescalings $u^k$ of the solution $u$.

This study of the good part of the fundamental solution and the idea of how to use the rescaling method in order to study asymptotics  related to the nonlocal diffusion equation \eqref{equation} give insight into how to attack other semilinear problems related to this equation like blow up or quenching problems (determine blow up and quenching profiles, for instance)  and thus they are of an independent interest.

\medskip

In the present paper we apply this method in order to study the asymptotic behavior as time goes to infinity of the solution to \eqref{problem} for general bounded initial data $u_0$. We find that
this behavior is determined by the way $u_0$ decays at infinity, retrieving for \eqref{problem} results that were known for the heat equation (see \cite{H,KP,KU}). In particular, this method allows to treat the critical case $p=1+2/\alpha$, $0<\alpha<N$ left open in our previous paper \cite{TW}. Moreover, we also complete the results of \cite{PR} for integrable initial data by proving that --in the critical case $p=1+2/N$-- there holds that $t^{N/2}u(x,t)\rightarrow0$ as $t\to\infty$ (as compared to the supercritical case $p>1+2/N$ where a nontrivial limit is achieved).

\medskip

 Moreover, since our approach allows for very general
 initial data, we find results for both integrable and  nonintegrable
 initial data that do not behave as a negative power at infinity, and
 we give some examples of application of our results
to such cases
at the end of this article.

Before presenting some examples of initial data to which our results apply let us introduce some notation.

\noindent{\bf Notation.} We consider  rescalings that depend on the behavior of $u_0$ at infinity.
For $k>0$ we denote by $v_k(x,t)=k^Nv(kx,k^2t)$ and, following the notation in \cite{KU}, we denote  by $v^k(x,t)=f(k)v(kx,k^2t)$ where
 $$f(k):=\frac{k^N}{\int_{B_k}u_0}.$$

\begin{rem}Since $u_0\ge 0$, $u_0\neq0$, there exist $\kappa>0$ and $x_0\in
\R^N$ such that $\int_{B_k(x_0)}u_0(x)\,dx\ge\kappa k^N$ for
$k>0$ small. Without loss of generality we will assume that $x_0=0$.
\end{rem}

We assume further.

\noindent{\bf Conditions on $f$}
\begin{enumerate}
\item[{\bf (F1)}] $u_0\in L^\infty(\R^N)$ and there exists $B>0$ such that
$f(|x|)u_0( x)\le B$.

\bigskip

\item[{\bf (F2)}] For every $\delta>0$, there exists $C_\delta>0$ such that $f(k)\le C_\delta f(l)$ if $k_0\le
k\le l\delta^{-1}$.

\bigskip

 \item[{\bf (F3)}] There exists $c_0\ge0$ such that $F(k):=f(k)^{1-p}k^2\to
c_0$ as $k\to\infty$.

\end{enumerate}

\medskip

\begin{rem} Assumption {\bf (F3)} implies that $f(k)\ge c_1k^{2/(p-1)}$
if $k\ge k_0$ and $k_0$ is large. Therefore, $f(k)\to\infty$ as $k\to\infty$.
\end{rem}

\bigskip

\noindent{\bf Examples}
By including the function $f$ in our rescaled sequence we are able to
deal with rather general initial conditions.
It is also interesting to note that, for $u_0\in L^\infty$
satisfying
\begin{equation}\label{condu0}
|x|^\alpha u_0(x)\to A>0\qquad\mbox{as }|x|\to\infty\mbox{ \ with }0<\alpha<N,
\end{equation}
the function $f$ behaves like  $k^\alpha$ as $k$ tends to
infinity. This is then, a rather usual rescaling in this case.

\bigskip

The following is a list of  examples of initial
data satisfying our assumptions.

\begin{enumerate}
\item[{\bf Ex. 1}] Assume $u_0\in L^\infty(\R^N)$ and $|x|^\alpha u_0(x)\to A>0$ as $|x|\to\infty$ with $0<\alpha<N$. Then,
\begin{align*}
&f(k)\sim k^\alpha\qquad\mbox{so we take it to be equal}\\
&F(k)=k^{-\alpha(p-1)+2}\to \begin{cases}0\quad&\mbox{if }p> 1+\frac2\alpha\\
1\quad&\mbox{if }p= 1+\frac2\alpha
\end{cases}
\end{align*}

\medskip

\item[{\bf Ex. 2}] Assume $u_0\in L^\infty(\R^N)$ and $|x|^N u_0(x)\to A>0$ as $|x|\to\infty$. Then,
\begin{align*}
&f(k)\sim \frac{k^N}{\log k}\qquad\mbox{so we take it to be equal}\\
&F(k)={k^{-N(p-1)+2}}(\log k)^{p-1}\to 0\in\R\qquad\mbox{if }p> 1+\frac2N.
\end{align*}

\medskip

\item[{\bf Ex. 3}] Assume $u_0\in L^\infty(\R^N)$ and $\displaystyle\frac{|x|^\alpha}{\log|x|} u_0(x)\to A>0$ as $|x|\to\infty$ with $0<\alpha<N$. Then,
\begin{align*}
&f(k)\sim \frac{k^\alpha}{\log k}\qquad\mbox{so we take it to be equal}\\
&F(k)=k^{-\alpha(p-1)+2}(\log k)^{p-1}\to 0\in\R\qquad\mbox{if }p> 1+\frac2\alpha.
\end{align*}

\medskip

\item[{\bf Ex. 4}] Assume $u_0\in L^\infty(\R^N)$ and $\displaystyle{|x|^\alpha}(\log|x|) u_0(x)\to A>0$ as $|x|\to\infty$ with $0<\alpha<N$. Then,
\begin{align*}
&f(k)\sim {k^\alpha}{\log k}\qquad\mbox{so we take it to be equal}\\
&F(k)=\frac{k^{-\alpha(p-1)+2}}{(\log k)^{p-1}}\to 0\quad\mbox{if }p\ge 1+\frac2\alpha.
\end{align*}

\medskip

\item[{\bf Ex. 5}] Assume $u_0\in L^\infty(\R^N)$ and $\displaystyle\frac{|x|^N}{\log|x|} u_0(x)\to A>0$ as $|x|\to\infty$. Then,
\begin{align*}
&f(k)\sim \frac{k^N}{\log^2 k}\qquad\mbox{so we take it to be equal}\\
&F(k)=k^{-N(p-1)+2}(\log k)^{2(p-1)}\to 0\quad\mbox{if }p> 1+\frac2N.
\end{align*}

\medskip

\item[{\bf Ex. 6}] Assume $u_0\in L^\infty(\R^N)$ and $\displaystyle{|x|^N}(\log|x|) u_0(x)\to A>0$ as $|x|\to\infty$. Then,
\begin{align*}
&f(k)\sim \frac{k^N}{\log\log k}\qquad\mbox{so we take it to be equal}\\
&F(k)=k^{-N(p-1)+2}(\log\log k)^{p-1}\to 0\quad\mbox{if }p> 1+\frac2N.
\end{align*}

\medskip

\item[{\bf Ex. 7}] Assume $u_0\in L^1(\R^N)$. Then,
\begin{align*}
&f(k)\sim k^N\qquad\mbox{so we take it to be equal}\\
&F(k)=k^{-N(p-1)+2}\to \begin{cases}0\quad&\mbox{if }p> 1+\frac2N\\
1\quad&\mbox{if }p= 1+\frac2N
\end{cases}
\end{align*}
\end{enumerate}

\bigskip

Let us just mention that the function $f(k)$ is related to the rate of
decay in time of the solution whereas $c_0=\lim_{k\to\infty}F(k)$
turns  out to be  the coefficient in front of the absorption term in the equation satisfied by the limiting profile.

\bigskip

For our main results we refer to Section 4.

\bigskip

The paper is organized as follows. In Section 2 we construct barriers
for the good part $W$ of the fundamental solution  of \eqref{equation}, as well as for
its space and time derivatives. By using these barriers we obtain an
upper bound for the solution $u_L$ to \eqref{equation} for all
times. This result improves the one we had established for finite time
intervals in \cite{TW} in case $u_0$ satisfies \eqref{condu0}. Also, this bound on $u_L$ implies that the rescaled functions $u^k$ are uniformly bounded in $\R^N\times[\tau,\infty)$ for every $\tau>0$.

In Section 3 we analyze the rescaled problems satisfied by  the $u^k$'s and prove that, under sequences, $\{u^k\}$  converges uniformly on
compact sets to a solution $U$ of the equation $U_t-{\a}\Delta U=-c_0U^p$, where
$c_0=\lim_{k\rightarrow\infty}F(k).$

Moreover, we obtain a general result stating that
 we can determine the limit function $U$. In
fact, we prove that if $u_0^k(x)\rightarrow\phi(x)$ as $k$ tends to infinity in the
sense of distributions and, for every $R>0$ there holds that
$$||u^k||_{L^1(B_R\times(0,\tau))}\leq C(R)\tau $$
 and either  there exists $\gamma>0$ such that
$$||u^k||_{L^p(B_R\times(0,\tau))}\leq C(R)\tau^{\gamma}$$
or, for every $\tau\le1$, $R>0$,
$$
\lim_{k\to\infty} ||u^k||_{L^p(B_R\times(0,\tau))}=0,$$
then $U$ satisfies moreover,
$$U(x,0)=\phi(x).$$

Then we establish, for the different cases of nonintegrable initial data $u_0$ considered in the examples, the
desired $L^1$ and $L^p$ bounds.

At the end of the section we analyze
the case where $u_0$ is integrable (in which case it does not satisfy an $L^p$
estimate as above) and determine $U$, both for $p$ supercritical and for $p$ critical.

Finally, in Section 4 we prove our main results on the asymptotic
behavior of the solution for initial data comprised in the examples
above. In particular, we obtain the behavior in the critical cases $p=1+2/\alpha$ when $u_0$ satisfies \eqref{condu0}, and $p=1+2/N$ when $u_0\in L^1\cap L^\infty$, that were left open in \cite{TW} and \cite{PR} respectively.

\bigskip

\section{Barriers and first asymptotic estimates}

In this section we analyze in detail the smooth part W of the
fundamental solution to the linear equation
\begin{equation}\label{eqL2}
u_t=\int J(x-y)(u(y,t)-u(x,t))\,dy = Lu.
\end{equation}
We establish upper bounds for W and also for its space and time
derivatives. These will be essential in Section 3, in order to prove the convergence of the rescaled sequence.

In \cite{ChChR} the authors observed that the fundamental
solution of \eqref{eqL2} can be written as
$$
U(x,t)=e^{-t}\delta +W(x,t)
$$
where $\delta$ is the Dirac measure and $W(x,t)$ is a smooth function. Then, in \cite{IR} the authors showed that
$$
|W(x,t)|\le C_0t^{-N/2}.
$$

In \cite{TW} we established that $W$ is a solution to the problem
\begin{equation}\label{eq-W}
\left\{\begin{aligned}
&W_t(x,t)=\int J(x-y)\big(W(y,t)-W(x,t)\big)\,dy + e^{-t}J(x)\\
&W(x,0)=0,
\end{aligned}\right.
\end{equation}
and used this fact to prove that $W\ge0$ and to obtain estimates in
$L^q(\R^N)$, which in turn allowed us to establish the asymptotic
behavior of solutions to the nonlocal problem with absorption in the
supercritical case.

In order to deal with the critical case, we will use the method of rescaled sequences for which we need the knowledge of the behavior of $W(x,t)$ as $|x|\to\infty$. We obtain this behavior from
sharp barriers. We have,
\begin{teo}\label{teo-barrier-W}
Let $W$ as above. There exists a constant $C>0$ depending only on $J$ and $N$ such that
\begin{equation}\label{barrier-W}
W(x,t)\le C\frac t{|x|^{N+2}}.
\end{equation}
\end{teo}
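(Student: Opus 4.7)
The plan is to prove the bound by comparison with an explicit supersolution of \eqref{eq-W}, splitting the analysis at the parabolic scale $|x|^2\sim t$. In the \emph{inner} region $|x|^2\le\lambda t$ (with $\lambda>0$ to be fixed later) the conclusion is immediate from the known uniform estimate of \cite{IR}: since $W(x,t)\le C_0 t^{-N/2}$ and $t^{-N/2}\le \lambda^{(N+2)/2}\,t/|x|^{N+2}$ whenever $|x|^2\le\lambda t$, the desired bound holds in this range with constant $C_0\lambda^{(N+2)/2}$. The substance of the proof is therefore entirely in the outer region $|x|^2\ge\lambda t$.

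For the outer region I would build a supersolution of the form
\[
\bar W(x,t)=M\,t\,\phi(x)+D(1-e^{-t})\psi(x),
\]
where $\phi\in C^\infty(\R^N)$ is a positive extension of the profile $|x|^{-(N+2)}$ that is bounded above and below by positive constants on each compact set, and $\psi\in C_c^\infty(\R^N)$ is a nonnegative cutoff chosen so that $D\psi\ge J$ pointwise. The analytic heart of the argument is the pointwise decay
\[
|L\phi(x)|\le\frac{K}{|x|^{N+4}}=\frac{K\,\phi(x)}{|x|^2}\qquad\text{for }|x|\ge 2R_J,
\]
obtained by Taylor-expanding $\phi(y)$ around $y=x$ inside $\int J(x-y)\phi(y)\,dy$: the constant term cancels $-\phi(x)$, the linear term vanishes since $J$ is radially symmetric, and the quadratic term yields the stated decay from the finite second moment $\int J(z)|z|^2\,dz$. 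A direct computation then gives
\[
\bar W_t-L\bar W=M\phi-Mt\,L\phi+De^{-t}\psi-D(1-e^{-t})L\psi,
\]
and the supersolution inequality $\bar W_t-L\bar W\ge e^{-t}J$ is verified by absorbing the source into $De^{-t}\psi$, using the key estimate to produce $M\phi-MtL\phi\ge M\phi/2\ge 0$ whenever $|x|^2\ge 2Kt$ (which fixes $\lambda=2K$), and taking $M$ large enough to handle the bounded contribution of $D(1-e^{-t})L\psi$ on its compact support (where $\phi$ is bounded below by a positive constant). Since $\bar W(\cdot,0)\equiv 0=W(\cdot,0)$, comparison for \eqref{eq-W} yields $W\le\bar W$, and reading off the size of $\bar W$ (dominated by $Mt\phi(x)=Mt/|x|^{N+2}$ for large $|x|$, and by a $t/|x|^{N+2}$-type bound on the compact set where $\psi\ne 0$ after enlarging constants) gives the theorem.

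The chief obstacle is the interaction of the two regions through the nonlocal operator $L$: at a point $(x,t)$ in the outer region, $LW(x,t)$ involves integration over $y\in B_{R_J}(x)$, and some of those $y$ lie in the inner region. This forces $\bar W$ to dominate $W$ on all of $\R^N\times[0,\infty)$ and not merely on $\{|x|^2\ge\lambda t\}$, so the comparison step cannot be localised. That global character of the supersolution is precisely the reason $\phi$ must be smoothed near the origin (the pure ansatz $|x|^{-(N+2)}$ is not locally integrable, so $J\ast|x|^{-(N+2)}$ is infinite on $\mathrm{supp}(J)$) and the reason the corrector $D(1-e^{-t})\psi$ is included: it soaks up the source $e^{-t}J$ without spoiling the $t/|x|^{N+2}$ decay at infinity.
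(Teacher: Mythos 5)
Your argument is, at its core, the same as the paper's: split at the parabolic scale, handle $|x|^2\lesssim t$ with the $C_0t^{-N/2}$ bound from \cite{IR}, and in the outer region compare $W$ with a multiple of $t|x|^{-(N+2)}$, the key input being the Taylor-expansion estimate $|L(|x|^{-(N+2)})|\le K|x|^{-(N+4)}$ for $|x|$ beyond the support of $J$. The comparison step as you set it up, however, has a genuine gap. You assert that $\bar W=Mt\phi+D(1-e^{-t})\psi$ must work globally and you invoke comparison using only the initial condition $\bar W(\cdot,0)=0$; but you verify the inequality $\bar W_t-L\bar W\ge e^{-t}J$ only where $|x|^2\ge 2Kt$, and it in fact \emph{fails} in the inner region for large $t$. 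On any annulus where $\phi$ coincides with $|x|^{-(N+2)}$ one has $L\phi>0$: the profile is subharmonic away from the origin ($\Delta |x|^{-(N+2)}=4(N+2)|x|^{-(N+4)}$), and the same Taylor expansion shows $L\phi=\a\,\Delta\phi+O(|x|^{-(N+5)})$ there. Hence at such a fixed $x$ the term $M\phi(x)-Mt\,L\phi(x)$ tends to $-\infty$ as $t\to\infty$, while $De^{-t}\psi-D(1-e^{-t})L\psi$ stays bounded (and vanishes for $x$ outside a fixed compact set). No choice of $M$, $D$, $\phi$, $\psi$ of the proposed form yields a global supersolution --- indeed a nonnegative integrable $\phi$ with $L\phi\le0$ everywhere would have to satisfy $L\phi\equiv0$, since $\int L\phi=0$ --- so the comparison as stated does not go through. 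Note also that you conflate two different requirements: that $\bar W$ \emph{dominate} $W$ everywhere (which is the correct exterior condition for a nonlocal Dirichlet comparison) and that $\bar W$ be a \emph{supersolution} everywhere (which is what a comparison from the initial condition alone would need, and which is false here).

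The repair is precisely the localisation you dismissed, and it is what the paper does: apply the comparison principle for the nonlocal Dirichlet problem on $\mathcal{A}=\{|x|\ge K\sqrt t\}\cap\{|x|\ge2R\}$. For such problems the exterior condition is that the barrier dominate $W$ on the whole complement $\mathcal{A}^c$ (not merely on a topological boundary) --- this is exactly the content of your observation that $LW(x,t)$ samples points of the inner region --- and that condition you already possess: on $\{|x|^2\le\lambda t\}$ your first paragraph gives $W\le C_0\lambda^{(N+2)/2}\,t|x|^{-(N+2)}$, and on $\{|x|\le 2R\}$ the elementary bound $W\le\|J\|_\infty t$ does the job, so $Ct|x|^{-(N+2)}\ge W$ on $\mathcal{A}^c$ once $C$ is large. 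The supersolution inequality then only needs to hold inside $\mathcal{A}$, where $J(x)=0$; consequently the corrector $D(1-e^{-t})\psi$ and the smoothing of $\phi$ near the origin become unnecessary, and the bare profile $Ct|x|^{-(N+2)}$ suffices. With that modification your proof coincides with the paper's.
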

\begin{proof}
First observe that $W(x,t)\le v_1(x,t):=\|J\|_\infty t$. In fact, in any finite time interval, the function
$v_1$ is a  bounded supersolution of the problem \eqref{eq-W} satisfied by $W$. Thus, the inequality follows from the comparison principle.

Now, let $v_2(x,t)= C\frac t{|x|^{N+2}}$. We will show that there is a constant $C$ depending only on $J$ and $N$ such that $v_2$ is a supersolution of the Dirichlet problem
$$
\left\{\begin{aligned}
&v_t-Lv=e^{-t}J(x)\qquad\mbox{in }\A:=\{|x|\ge K\sqrt t\}\cap\{|x|\ge2R\}\\
&v= W \qquad\qquad\quad\ \  \ \ \ \  \mbox{in the complement of }\A\\
&v(x,0)=0
\end{aligned}\right.
$$
satisfied by $W$.
Here $R$ is large enough and such that $B_R$ contains the support of $J$, and $K$ is a large enough constant to be determined.

In fact, $v_2\ge W$ in $\A^c$ if $C$ is large since $W\le \|J\|_\infty t$ and $W\le C_0 t^{-N/2}$.

On the other hand, ${v_2}_t= \frac C{|x|^{N+2}}$. In order to estimate $L v_2$ we use Taylor's expansion to get,
$$\begin{aligned}
&\frac1{|y|^{N+2}}-\frac1{|x|^{N+2}}=-\frac{N+2}{|x|^{N+4}}\,x\cdot(y-x)-
\frac12\frac{N+2}{|x|^{N+4}}\,
|y-x|^2\\
&\hskip1cm\ \
+\frac12\frac{(N+2)(N+4)}{|x|^{N+6}}\,\big|x\cdot(y-x)\big|^2+
\int_0^1O\Big(\frac{|y-x|^3}{|x+s(y-x)|^{N+5}}\Big)\,ds.
\end{aligned}
$$

Now, since $J$ is radially symmetric,

\begin{align*}
 -&\frac{N+2}{|x|^{N+4}}\sum_{i=1}^N x_i\int J(x-y)(y_i-x_i)\,dy=0,\\
\frac12&\frac{N+2}{|x|^{N+4}}\int J(x-y)|y-x|^2\,dy=\frac{(N+2)N}{|x|^{N+4}}\,\a,
\end{align*}
and $$\frac12\sum_{i,j=1}^N\frac{x_ix_j}{|x|^{N+6}}\big(-(N+2)(N+4)\big)\int J(x-y)(y_i-x_i)(y_j-x_j)\,dy= -\frac{(N+2)(N+4)}{|x|^{N+4}}\,\a,$$
where $\a=\frac1{2N}\int J(x)|x|^2\,dx$.

On the other hand, $x+s(y-x)\ge |x|-|y-x|\ge \frac12|x|$ if $|x|\ge 2R$ and $|y-x|\le R$. Thus, if $|x|\ge 2R$,
$$
\int\int_0^1
J(x-y)O\Big(\frac{|y-x|^3}{|x+s(y-x)|^{N+5}}\Big)\,ds\,dy\le \frac
{C_1}{|x|^{N+5}}.
$$

Putting everything together we get, if $|x|\ge 2R$ and $R$ is large enough,
$$
{v_2}_t-L v_2\ge \frac{C}{|x|^{N+2}}\Big(1-{C_2}\frac t{|x|^2}\Big).
$$

So that, if $|x|\ge K\sqrt t$ with $K$ large enough,
$$
{v_2}_y-L v_2\ge \frac{C/2}{|x|^{N+2}}
\ge e^{-t}J(x)
$$
if $C$ is large enough since $J$ is bounded and has compact support.
\end{proof}

Now we find a barrier for the space derivatives of $W$.

\begin{teo} Let $W$ be as above. There exists a constant $C>0$ depending only on $J$ and $N$ such that
\begin{equation}\label{barrier-nablaW}
|\nabla W(x,t)|\le C\frac t{|x|^{N+3}}.
\end{equation}
\end{teo}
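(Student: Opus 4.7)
The plan is to imitate the proof of Theorem \ref{teo-barrier-W} component by component. Differentiating \eqref{eq-W} in $x_i$, the function $W_i:=\partial_i W$ satisfies the same linear nonlocal equation as $W$, with the radially antisymmetric source $e^{-t}\partial_i J$ replacing $e^{-t}J$:
$$(W_i)_t = L W_i + e^{-t}\partial_i J(x), \qquad W_i(\cdot,0)=0.$$
Since $\partial_i J\in C_0^\infty(\R^N)$ is still compactly supported in $B_R$, the whole argument of Theorem \ref{teo-barrier-W} should go through with the exponent $N+2$ replaced by $N+3$.

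The candidate supersolution is $v(x,t)=Ct/|x|^{N+3}$, tested on the same region $\A=\{|x|\ge K\sqrt{t}\}\cap\{|x|\ge 2R\}$. Taylor-expanding $1/|y|^{N+3}$ around $x$ and using the radial symmetry of $J$ to cancel the linear term, the quadratic part reduces, via $\int J(z)z_iz_j\,dz=2\a\delta_{ij}$, to
$$Lv \;=\; Ct\,\frac{5(N+3)\a}{|x|^{N+5}}+O\!\left(\frac{Ct}{|x|^{N+6}}\right),$$
where the factor $5=(N+5)-N$ arises as the difference between the $x\otimes x$ and the trace contributions in the Hessian of $|y|^{-N-3}$. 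Combining with $v_t=C/|x|^{N+3}$ yields
$$v_t-Lv \;\ge\; \frac{C}{|x|^{N+3}}\Big(1-C_2\,\frac{t}{|x|^2}\Big) \;\ge\; \frac{C}{2|x|^{N+3}}$$
as soon as $K$ is chosen so large that $C_2/K^2\le 1/2$ (and $|x|\ge 2R$ absorbs the remainder). Since $\partial_i J$ vanishes on $\A$, this makes $v$ a supersolution for both $W_i$ and $-W_i$ there.

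The comparison argument then closes once we check $v\ge|W_i|$ on $\A^c$. For $|x|\le 2R$ this is immediate from the trivial estimate $|W_i|\le\|\partial_i J\|_\infty t$ by enlarging $C$ in a way that depends only on $R$. For $|x|\le K\sqrt{t}$, however, $v$ is as small as $Ct^{-(N+1)/2}/K^{N+3}$, so one needs a global $L^\infty$ decay of the form
$$|W_i(x,t)| \;\le\; C_1\,t^{-(N+1)/2},$$
sharper by a factor $t^{-1/2}$ than the bound $|W|\le C_0 t^{-N/2}$ of \cite{IR}. I expect this finer decay to be the main obstacle of the proof; it can be obtained by an adaptation of the Fourier-analytic argument in \cite{IR}, using $\wh{W_i}(\xi,t)=i\xi_i\big(e^{t(\wh{J}(\xi)-1)}-e^{-t}\big)$ together with $1-\wh{J}(\xi)\gtrsim\min(|\xi|^2,1)$ and $|\wh{J}(\xi)|\lesssim|\xi|^{-m}$ for arbitrary $m$, which control the $L^1_\xi$ norm of the right-hand side by $t^{-(N+1)/2}$ for $t\ge 1$ and by a constant for $t\le 1$. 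With this bound in hand the comparison principle for the nonlocal parabolic equation applied to $v\pm W_i$ gives $|W_i|\le v$ in $\A$; the bound for $|x|\le 2R$ being trivial, summing over $i=1,\dots,N$ produces \eqref{barrier-nablaW}.
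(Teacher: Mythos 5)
Your proposal follows the paper's proof essentially step for step: differentiate the equation so that $W_{x_i}$ solves the same nonlocal problem with source $e^{-t}J_{x_i}$, use $Ct/|x|^{N+3}$ as a barrier on $\{|x|\ge K\sqrt t\}\cap\{|x|\ge 2R\}$ via the same Taylor-expansion computation as in Theorem \ref{teo-barrier-W}, and close the comparison on the complement using the trivial bound $|\nabla W|\le \|\nabla J\|_\infty t$ together with the decay $\|\nabla W(\cdot,t)\|_\infty\le C_0 t^{-(N+1)/2}$, which the paper simply cites from \cite{IR} and you correctly rederive from the Fourier representation. The argument is correct and is the same approach as the paper's.
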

\begin{proof}
We proceed as above. First, by differentiating the equation satisfied by $W$ we find that $V_i=W_{x_i}$ is the solution to
\begin{equation}\label{eq-Vi}
\left\{\begin{aligned}
& {V_i}_t-L V_i=e^{-t}J_{x_i}\\
&V_i(x,0)=0
\end{aligned}\right.
\end{equation}

As with $W$ we find immediately that $|V_i|\le \|\nabla J\|_\infty t$. On the other hand, from the Fourier characterization of $W$ it can be seen that $\|\nabla W(\cdot,t)\|_\infty\le C_0 t^{-\frac{N+1}2}$. (See, for instance, \cite{IR}).
Therefore, for every $K>0$ there exists a constant $C$ such that
$$
\bar v(x,t):= C\frac t{|x|^{N+3}}\ge V_i\qquad\mbox{in }\A^c.
$$

On the other hand, the same type of computation as the one in Theorem \ref{teo-barrier-W} yields that, if $K$ is large enough, there exists $C$ such that $\bar v$ is a supersolution to the Dirichlet problem
$$
\left\{\begin{aligned}
&v_t-Lv=e^{-t}J_{x_i}(x)\qquad\mbox{in }\A:=\{|x|\ge K\sqrt t\}\cap\{|x|\ge2R\}\\
&v=V_i \qquad\qquad\quad\ \hskip1cm \mbox{in the complement of }\A\\
&v(x,0)=0
\end{aligned}\right.
$$
satisfied by $V_i$.

We conclude that $V_i\le \bar v$.

Analogously, $-\bar v$ is a subsolution to this problem. Thus,
$
|V_i|\le \bar v
$
and the theorem is proved.
\end{proof}

The estimate in \eqref{barrier-nablaW} allows to derive estimates of the $L^1$ norm of $\nabla W$. In fact,
$$
\begin{aligned}
\int |\nabla W(x,t)|\,dx&=\int_{|x|\le \sqrt t}|\nabla W(x,t)|\,dx+
\int_{|x|\ge \sqrt t}|\nabla W(x,t)|\,dx\\
&\le C_0\int_{|x|\le \sqrt t}t^{-\frac{N+1}2}\,dx+C\int_{|x|\ge \sqrt t}
\frac t{|x|^{N+3}}\,dx\\
&=C_{N,J}\Big[t^{-\frac{N+1}2}t^{\frac N2}+t t^{-\frac32}\Big]\\
&=C_{N,J} t^{-\frac12}.
\end{aligned}
$$

So, we obtain a first estimate that is suitable for large times
\begin{equation}\label{first-int-nablaW}
\int |\nabla W(x,t)|\,dx\le C_1 \,t^{-\frac12}
\end{equation}
with $C_1$ depending only on $J$ and $N$.

On the other hand, since $|\nabla W|\le Ct$, there holds that
\begin{equation}\label{otra-barrera-nablaW}
|\nabla W(x,t)|\le C\frac t{(1+|x|)^{N+3}}.
\end{equation}
Thus, we have the following estimate, which is better than the
previous one for small $t$.
\begin{equation}\label{second-int-nablaW}
\int |\nabla W(x,t)|\,dx\le C_1\,t.
\end{equation}

\bigskip

We obtain similar results for $W_t$. We have,
\begin{teo} Let $W$ be as above. There exists a constant $C>0$ depending only on $J$ and $N$ such that
\begin{equation}\label{barrier-Wt}
| W_t(x,t)|\le e^{-t}J(x)+C\frac t{(1+|x|)^{N+4}}.
\end{equation}
\end{teo}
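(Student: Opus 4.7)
The plan is to mimic the barrier scheme used in Theorems \ref{teo-barrier-W} and the preceding one for $\nabla W$, after peeling off the singular contribution $e^{-t}J(x)$ from $W_t$. Concretely, set $Z(x,t) := W_t(x,t) - e^{-t}J(x)$. Evaluating \eqref{eq-W} at $t=0$ one reads $W_t(x,0) = J(x)$, and differentiating \eqref{eq-W} in $t$ one checks that $Z$ solves
$$
\left\{\begin{aligned}
&Z_t - LZ = e^{-t}\,LJ(x)\qquad\mbox{in }\R^N\times(0,\infty),\\
&Z(x,0) = 0,
\end{aligned}\right.
$$
where $LJ = J*J - J$ is smooth and compactly supported, with support contained in $B_{2R}$ whenever $J$ is supported in $B_R$. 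By the triangle inequality, the theorem reduces to the pointwise estimate $|Z(x,t)| \le C\,t/(1+|x|)^{N+4}$.

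I would prove this in three steps parallel to those in Theorem \ref{teo-barrier-W}. First, a crude global bound $|Z(x,t)| \le \|LJ\|_\infty\,t$ obtained by comparing $Z$ against the constant-in-$x$ supersolution $\|LJ\|_\infty\,t$. Second, a sharp supnorm decay $\|Z(\cdot,t)\|_\infty \le C_0\,t^{-(N+2)/2}$ coming from the Fourier representation of $W$, in the spirit of \cite{IR} (heuristically $W_t$ gains one extra power of $t^{-1}$ over $W$, exactly as for the Gaussian kernel). Third, in the outer region $\A := \{|x|\ge K\sqrt t\}\cap\{|x|\ge 2R\}$, where the forcing $e^{-t}LJ(x)$ vanishes identically, use $\bar v(x,t) := C\,t/|x|^{N+4}$ as a supersolution. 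A Taylor expansion of $|y|^{-(N+4)}$ around $x$ up to third order, with the linear piece killed by the radial symmetry of $J$ exactly as in the proof of Theorem \ref{teo-barrier-W} (now with $N+2$ replaced by $N+4$), yields
$$
\bar v_t - L\bar v \ge \frac{C}{|x|^{N+4}}\Big(1 - C'\,\frac{t}{|x|^2}\Big) \ge \frac{C/2}{|x|^{N+4}} \ge 0
$$
in $\A$, provided $K$ is chosen large enough. Matching $\bar v$ against $|Z|$ on the parabolic piece of $\partial\A$ via the supnorm decay of Step 2, and on the spatial piece $|x|=2R$ via the crude bound of Step 1, both inequalities hold for $C$ large enough. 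The comparison principle for the Dirichlet problem on $\A$ then gives $|Z|\le \bar v$ in $\A$, since the same argument applied to $-\bar v$ shows $-Z\le\bar v$. Combining with Step 1 in the complementary region $\{|x|\le\max(K\sqrt t,2R)\}$ assembles into the desired global bound $|Z(x,t)|\le C\,t/(1+|x|)^{N+4}$.

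The one delicate point is the Taylor expansion in Step 3: the first-order term must cancel by radial symmetry of $J$, the $x\cdot(y-x)$ and $|y-x|^2$ pieces must combine into the familiar $\Delta$-like contribution that decays one power of $|x|^{-2}$ faster than $\bar v_t$, and the cubic remainder must be absorbed into $O(|x|^{-(N+6)})$ uniformly for $|y-x|\le R$ and $|x|\ge 2R$ using $|x+s(y-x)|\ge \tfrac12|x|$. Since this is mechanically identical to the computation already carried out with weight $|x|^{-(N+2)}$ in Theorem \ref{teo-barrier-W}, I expect no genuinely new difficulty; only the bookkeeping with the shifted exponent has to be checked.
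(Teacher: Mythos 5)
Your proposal is correct and follows essentially the same route as the paper: the same decomposition $V=W_t-e^{-t}J$ solving $V_t-LV=e^{-t}(J*J-J)$ with $V(x,0)=0$, the same crude linear-in-$t$ bound and Fourier decay $t^{-(N+2)/2}$ for the boundary matching, and the same barrier $Ct/|x|^{N+4}$ verified by the Taylor-expansion computation of Theorem \ref{teo-barrier-W} with the exponent shifted. No gaps.
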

\begin{proof}By differentiating the equation satisfied by $W$ we obtain
$$
(W_t)_t(x,t)=L W_t(x,t)-e^{-t}J(x).
$$

On the other hand, from the equation for $W$ we get
$$
W_t(x,0)=J(x).
$$

Let $V(x,t)=W_t(x,t)-e^{-t}J(x)$. Then,
\begin{equation}\label{V}
\left\{\begin{aligned}
& V_t-LV=e^{-t}(J*J-J)\\
&V(x,0)=0
\end{aligned}\right.
\end{equation}

Since $|J*J-J|\le 2\|J\|_\infty$ we get a first estimate for $V$: $|V(x,t)|\le 2\|J\|_\infty t$.

We can derive the estimate $|V(x,t)|\le C t^{-\frac{N+2}2}$ by differentiating $W$ with respect to time in its Fourier representation.

Now, proceeding as  above we see that there exist $C$ and $K$ large so that the function $C\frac t{|x|^{N+4}}$ is a supersolution of the following problem satisfied by $V$.
$$
\left\{\begin{aligned}
&v_t-Lv=e^{-t}(J*J-J)\qquad\mbox{in }\A:=\{|x|\ge K\sqrt t\}\cap\{|x|\ge2R\}\\
&v= V \qquad\qquad\quad\ \hskip2cm \mbox{in the complement of }\A\\
&v(x,0)=0
\end{aligned}\right.
$$

Analogously, $-C\frac t{|x|^{N+4}}$ is a subsolution to this problem.
 Therefore,
$$
|V(x,t)|\le C\frac t{|x|^{N+4}}.
$$

So that,
$$
|W_t(x,t)|\le e^{-t}J(x)+|V(x,t)|\le e^{-t}J(x)+C\frac t{|x|^{N+4}}.
$$

Since, $|V(x,t)|\le Ct$ we prove \eqref{barrier-Wt}.
\end{proof}

From estimate \eqref{barrier-Wt} we obtain the following estimates
\begin{align}\label{integral-Wt}
&\|W_t(\cdot,t)\|_{L^1(\R^N)}\le C t^{-1}\\
&\|W_t(\cdot,t)\|_{L^1(\R^N)}\le e^{-t}+ C t.
\end{align}

\bigskip

Finally, we construct a barrier for the solution $u_L$. We have,
\begin{prop}\label{barrier-uL} Let $u_L$ be the solution to
\eqref{eqL2} with initial datum $u_0$ satisfying our
assumptions. There exists a constant $C$ depending only on
$\|u_0\|_\infty$, $B,N,J$ such that
\begin{equation}\label{eq-barrier-uL}
\begin{aligned}
&f(t^{1/2} )u_L(x,t)\le C,\\
&f(|x|)u_L(x,t)\le  C.
\end{aligned}
\end{equation}

In particular, if $\mu>0$ is such that $k^\mu\le C f(k)$ if $k\ge1$ there holds that
\begin{equation}\label{bound-uL}
u_L(x,t)\le \frac C{(1+t^{1/2}+|x|)^\mu}
\end{equation}

Let us recall that this is always the case when $\mu=\frac2{p-1}$.
\end{prop}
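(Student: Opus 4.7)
The plan is to start from the representation $u_L(x,t) = e^{-t}u_0(x) + (W(\cdot,t) * u_0)(x)$ supplied by the decomposition $U = e^{-t}\delta + W$ of the fundamental solution, and to combine it with the sharp pointwise bound $W(x,t) \le C \min\bigl(t^{-N/2}, t|x|^{-(N+2)}\bigr)$ coming from Theorem \ref{teo-barrier-W} together with the $L^\infty$-estimate of \cite{IR}. The exponential term poses no difficulty: assumption (F1) yields $e^{-t}u_0(x) \le Be^{-t}/f(|x|) \le B/f(|x|)$, while the Remark gives $f(k) \le C k^N$ globally (since $\int_{B_k} u_0 \ge \kappa k_0^N$ for $k \ge k_0$), so $e^{-t}u_0(x) \le e^{-t}\|u_0\|_\infty \le Ce^{-t}t^{N/2}/f(\sqrt t) \le C'/f(\sqrt t)$. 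Throughout the argument I exploit the essential monotonicity supplied by (F2) with $\delta = 1$: there exists $C_1$ such that $f(k) \le C_1 f(l)$ whenever $k_0 \le k \le l$.

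For the convolution I work in two complementary regimes. In the diffusive regime $|x| \le 2\sqrt t$ I split $\int W(x-y,t) u_0(y)\,dy$ at $|x-y| = \sqrt t$: on the near zone $B_{\sqrt t}(x) \subset B_{3\sqrt t}$ the barrier $W \le Ct^{-N/2}$ and the identity $\int_{B_k} u_0 = k^N/f(k)$ yield a piece of order $C/f(3\sqrt t) \le C'/f(\sqrt t)$; on the far zone I decompose into dyadic annuli $A_j = \{2^j\sqrt t < |x-y| \le 2^{j+1}\sqrt t\}$, observe that $A_j \subset B_{2^{j+2}\sqrt t}$, and use $W \le Ct|x-y|^{-(N+2)}$ to see that the $A_j$-contribution is of order $C \cdot 2^{-2j}/f(2^{j+2}\sqrt t) \le C' \cdot 2^{-2j}/f(\sqrt t)$ via (F2); summing the geometric series gives $C/f(\sqrt t)$. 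In the tail regime $|x| > 2\sqrt t$ I split at $|y| = |x|/2$: on $|y| \le |x|/2$ we have $|x-y| \ge |x|/2 > \sqrt t$, so $W(x-y,t) \le Ct|x|^{-(N+2)}$, and combined with $\int_{B_{|x|/2}} u_0 = (|x|/2)^N/f(|x|/2)$, the constraint $t/|x|^2 \le 1/4$, and (F2), the piece is $\le C/f(|x|)$; on $|y| > |x|/2$ assumption (F1) gives $u_0(y) \le B/f(|y|) \le C/f(|x|)$, and then the mass identity $\int W(z,t)\,dz = 1-e^{-t} \le 1$ (a consequence of $\int J = 1$ and mass conservation for $u_t = Lu$) produces a contribution $\le C/f(|x|)$. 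Hence $u_L \le C/f(\sqrt t)$ in the first regime and $u_L \le C/f(|x|)$ in the second; each estimate converts into the other through (F2), yielding both inequalities of \eqref{eq-barrier-uL} in every regime (with the trivial bound $u_L \le \|u_0\|_\infty$ absorbing small values of $|x|$ or $\sqrt t$ where $f$ is bounded).

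To derive \eqref{bound-uL}, combine the two estimates into $u_L(x,t) \le C/\max(f(\sqrt t), f(|x|))$, which by essential monotonicity of $f$ is comparable to $C/f\bigl(\max(\sqrt t,|x|)\bigr)$; applying the hypothesis $k^\mu \le Cf(k)$ for $k \ge 1$ gives $u_L \le C/\max(\sqrt t,|x|)^\mu$ whenever $\max(\sqrt t,|x|) \ge 1$, and since $1 + \sqrt t + |x| \le 3\max(\sqrt t,|x|)$ in that range, this is the claimed $C/(1+\sqrt t+|x|)^\mu$; when $\max(\sqrt t,|x|) < 1$ the trivial bound $u_L \le \|u_0\|_\infty$ together with $1+\sqrt t+|x| \le 3$ closes the argument. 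The main delicate point is ensuring that the dyadic summation in the far zone and the repeated applications of (F2) do not accumulate $f$-dependent constants, which is exactly what (F2) with $\delta=1$ guarantees by making $f$ essentially nondecreasing above $k_0$ with a single universal constant $C_1$.
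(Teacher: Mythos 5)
Your proof is correct and follows essentially the same route as the paper: the representation $u_L=e^{-t}u_0+W(\cdot,t)*u_0$, the two barriers $W\le C_0t^{-N/2}$ and $W\le Ct|x|^{-(N+2)}$, the mass bound $\int W(\cdot,t)\le 1$, and assumptions {\bf (F1)}--{\bf (F2)} together with $f(2k)\le 2^Nf(k)$. The only difference is organizational: where you run a dyadic decomposition in $|x-y|$ over the far zone of the regime $|x|\le 2\sqrt t$, the paper simply splits the convolution at $|y|=\sqrt t$ (centered at the origin), bounding the outer part by $f(\sqrt t)u_0(y)\le C_1 f(|y|)u_0(y)\le C_1B$ via {\bf (F2)} and then integrating $W$, which avoids the geometric series; both versions are valid and give constants with the stated dependence.
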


\begin{rem} \eqref{bound-uL} improves the estimate found in \cite{TW}, Proposition 2.1 where the estimate was proved in finite time intervals when $u_0$ satisfies \eqref{condu0} with $\alpha>0$.
\end{rem}

\begin{proof} We already know that $u_L(x,t)\le \|u_0\|_\infty$.
Let us begin with the time estimate.  We have --by the estimates on
$W$ and our assumptions on $u_0$-- that for $t$ large,
$$
\begin{aligned}
&f(t^{1/2})u_L(x,t)=f(t^{1/2})e^{-t}u_0(x)+f(t^{1/2})\int
W(x-y,t)u_0(y)\,dy\\
&\ \ \le C +\frac1{\int_{B_{t^{1/2}}}u_0}\int_{|y|<t^{1/2}}t^{N/2}W(x-y,t)
u_0(y)\,dy +\int_{|y|>t^{1/2}}W(x-y,t)f(t^{1/2})u_0(y)\,dy \\
&\ \ \le C\Big(1+\int W(x-y,t)\,dy\Big)\le C.
\end{aligned}
$$

On the other hand, since $u_L$ is bounded and $f$ is locally
bounded,
$$
f(t^{1/2})u_L(x,t)\le C
$$
if $t$ is bounded.

Now, we estimate,
$$
\begin{aligned}
&f(|x|) u_L(x,t)=e^{-t}f(|x|) u_0(x)+f(|x|)\int W(x-y,t)u_0(y)\,dy\\
&\ \ \le B+f(|x|)\int_{|y|<\frac12|x|}W(x-y,t)u_0(y)\,dy+f(|x|)\int_{|y|\ge\frac12|x|}W(x-y,t)u_0(y)\,dy\\
&\ \ =B+I+II.
\end{aligned}
$$

Observe that $f(2k)=\frac {2^Nk^N}{\int_{B_{2k}}u_0}\le 2^Nf(k)$.
Thus,
$$
II\le C \int W(x-y,t)f(2|y|)u_0(y)\,dy\le C2^N B.
$$

In order to estimate $I$ we use the barrier of $W$. We have, since $|y|<\frac12|x|$ implies that $|x-y|>\frac12|x|$,
\begin{align*}
I&=f(|x|)\int_{|y|<\frac12|x|}W(x-y,t)u_0(y)\,dy\le C|x|^{-2} t\frac1{\int_{B_{|x|}}u_0}
\int_{|y|<\frac12|x|}u_0(y)\,dy\\
&\le  C\frac t{|x|^2}\le C\quad\mbox{if } |x|^{2}>t.
\end{align*}

On the other hand,  in the region $k_0^2\le |x|^2\le  t$   there
holds that
$$
f(|x|) u_L(x,t)\le C_1 f(t^{1/2})u_L(x,t)\le C.
$$

Finally, if $|x|\le k_0$, there holds that $f(|x|)u_L(x,t)\le C$.
So, the proposition is proved.\end{proof}

\begin{rem}\label{rmkulequL}
When $u_0\geq 0$ the solution $u_L$ of the homogeneous equation
\eqref{eqL2} with initial data $u_0$ is non-negative. Thus, $u_L$ is a
supersolution to \eqref{equation} and $0$ is a subsolution to
\eqref{equation}. By the comparison principle we deduce that
$$0\leq u(x,t)\leq u_L(x,t),$$
for every solution $u$ of \eqref{equation}. Hence, the estimates of the
previous proposition hold with $u_L$ replaced by $u$, that is,
$$f(|x|)u(x,t)\leq C\quad{\rm and }\quad  f(t^{1/2})u(x,t)\leq C.$$
\end{rem}

\bigskip

\section{The rescaled problem}
\setcounter{equation}{0}

In this section we analyze the rescaled problem. This is, the one
satisfied by the rescaled functions $u^k$. Using the bounds obtained
in the previous section we are able to prove that the rescaled
sequence $\{u^k\}$ has a convergent subsequence to a function
$U$. We establish the equation satisfied by the limit function $U$, as
well as the initial datum $U(x,0)$, depending on the conditions assumed
on $u_0$. In the case of nonintegrable initial data $u_0$, in order to
completely determine $U$, it is necessary to establish certain bounds on
the $L^1$ and $L^p$ norms of $u^k$. On the other hand, if $u_0$ is
integrable, we proceed in a different way, as can be seen at the end
of this section.

Let $u$ be a solution of
\begin{equation}\label{eq3}
\begin{cases}
u_t=Lu-u^p\qquad&\mbox{in }\,\R^N\times(0,\infty)\\
u(x,0) =  u_0(x)&\mbox{in }\,\R^N.
\end{cases}
\end{equation}
As defined in the introduction we denote by
$$u^k(x,t)=f(k)u(kx,k^2t),\,{\rm where }\;f(k)=\frac{k^N}{\int_{B_R}u_0}.$$

By Remark \ref{rmkulequL} we have that $f(t^{1/2})u(x,t)\le  C$. So
that,
by our assumption {\bf (F2)} on $f$, if $t\ge t_0$ there holds that
$$
u^k(x,t)\le C_{t_0}f(k\sqrt t)u(kx,k^2t)\le C_{t_0}.
$$

The function $u^k$ satisfies the following equation,
\begin{equation}\label{problem-rescaled}
u^k_t=k^2L_k u^k-F(k) (u^k)^p
\end{equation}
where $F(k)=f(k)^{1-p}k^2\to c_0\ge0$ as $k\to \infty$ by our assumptions, and the operator $L_k$ is defined by
\begin{equation}\label{Lk}
L_k v(x)=\big(J_k*v\big)(x)-v(x)=k^{N}\int J\big(k(x-y)\big)\big(v(y)-v(x)\big)\,dy.
\end{equation}

Our goal is to study the behavior of the sequence $\{u^k\}$. To do
so, we will decompose $u^k$ into an exponentially small part and
another one, $h^k$, depending on $W$, the smooth part of the
fundamental solution of the homogeneous linear problem.

Take $t_0>0$ and write $u(x,t)=e^{-(t-k^2t_0)}u(x,k^2t_0)+z(x,t)$. Then,
$$
z_t-Lz=e^{-(t-k^2t_0)}\big(J*u(\cdot,k^2t_0)\big)-u^p
$$
and $z(x,k^2t_0)=0$. After rescaling we have that
$u^k(x,t)=e^{-k^2(t-t_0)}u^k(x,t_0)+z^k(x,t)$. Observe that
$$e^{-k^2(t-t_0)}u^k(x,t_0)\le C_{t_0}e^{-k^2(t-t_0)}\to0,$$ as $k\to\infty$ uniformly in $t-t_0\ge c>0$,
so that the asymptotic behavior of $u^k$ for $k\to\infty$ is that of $z^k$.

By the variations of constants formula we get
$$
z(x,t)=\int_{k^2t_0}^t S(t-s)\Big[e^{-(s-k^2t_0)}\big(J*u(\cdot,k^2t_0)\big)(x)-u^p(x,s)\Big]\,ds
$$
where $S(t)$ is the semigroup associated to the homogeneous equation $u_t-Lu=0$.

Thus,
\begin{align*}
z(x,t)&= \int_{k^2t_0}^t e^{-(t-s)}\Big[e^{-(s-k^2t_0)}\big(J*u(\cdot,k^2t_0)\big)(x)-u^p(x,s)\Big]\,ds\\
&+\int_{k^2t_0}^t \int W(x-y,t-s)\Big[e^{-(s-k^2t_0)}\big(J*u(\cdot,k^2t_0)\big)(y)-u^p(y,s)\Big]\,dy\,ds\\
&=\int_{k^2t_0}^t e^{-(t-k^2t_0)}\big(J*u(\cdot,k^2t_0)\big)(x)\,ds-
\int_{k^2t_0}^t e^{-(t-s)}u^p(x,s)\,ds + h(x,t)\\
&=(t-k^2t_0)e^{-(t-k^2t_0)}\big(J*u(\cdot,k^2t_0)\big)(x)-
\int_{k^2t_0}^t e^{-(t-s)}u^p(x,s)\,ds + h(x,t)
\end{align*}
with
$$
h(x,t)=\int_{k^2t_0}^t \int W(x-y,t-s)\Big[e^{-(s-k^2t_0)}\big(J*u(\cdot,k^2t_0)\big)(y)-u^p(y,s)\Big]\,dy\,ds.
$$

Therefore,
\begin{align*}
z^k(x,t)&= k^2(t-t_0)e^{-k^2(t-t_0)}f(k)\big(J*u(\cdot,k^2t_0)\big)(kx)
-\int_{k^2t_0}^{k^2t}e^{-(k^2t-s)}f(k) u^p(kx,s)\,ds\\
&\ \ +h^k(x,t)\\
&=k^2(t-t_0)e^{-k^2(t-t_0)}f(k)\big(J*u(\cdot,k^2t_0)\big)(kx)-F(k)
\int_{t_0}^t e^{-k^2(t-s)}(u^k)^p(x,s)\,ds\\
&\ \ +h^k(x,t).
\end{align*}

There holds that,
\begin{align*}
&f(k) \big(J*u(\cdot,k^2t_0)\big)(kx)=\int J(kx-y)f(k) u(y,k^2t_0)\,dy\\
=&\big(J_k*u^k(\cdot,t_0)\big)(x).
\end{align*}

So that,
\begin{align*}
z^k(x,t)&= k^2(t-t_0)e^{-k^2(t-t_0)}\big(J_k*u^k(\cdot,t_0)\big)(x)-
F(k)
\int_{t_0}^t e^{-k^2(t-s)}(u^k)^p(x,s)\,ds\\
&\ \ +h^k(x,t).
\end{align*}

With similar computations we find that
\begin{align*}
h^k(x,t)&= k^2\int_{t_0}^t\int W_k(x-y,t-s) e^{-k^2(s-t_0)}\big(J_k*u^k(\cdot,t_0)\big)(y)\,dy\,ds\\
&\ \ -F(k)\int_{t_0}^t\int W_k(x-y,t-s) (u^k)^p(y,s)\,dy\,ds,
\end{align*}
where $W_k(x,t)=k^NW(kx,k^2t)$.

Let us now estimate the first term in the expansion of $z^k$. We have
$$
k^2(t-t_0)e^{-k^2(t-t_0)}\big(J_k*u^k(\cdot,t_0)\big)(x)\le C k^2(t-t_0)e^{-k^2(t-t_0)} C_{t_0}\to 0\quad\mbox{as }k\to\infty,
$$
uniformly in $t-t_0\ge c>0$.

For the second term, since $F(k)$ is bounded, we have the estimate
\begin{align*}
F(k)\int_{t_0}^t e^{-k^2(t-s)}(u^k)^p(x,s)\,ds&\le  C_{t_0}\int_{t_0}^{t}e^{-k^2(t-s)}\,ds\\
&\le C_{t_0} k^{-2}\to0\quad{as}\quad k\to\infty.
\end{align*}

Therefore, the asymptotic behavior as $k\to\infty$ of $u^k$ is that of $h^k$.

Since the functions $h^k$ are smooth, we can show that the family $\{h^k\}$ is
precompact in $C(\K)$ with $\K\subset\subset \R^N\times(t_0,\infty)$ by finding uniform Holder estimates.

We begin with estimates in space.
\begin{prop}\label{lipschitz} Let $t_0>0$ and $ T>2t_0$. There exists a constant $L>0$ such $|\nabla h^k(x,t)|\le L$ for $x\in \R^N$ if $t\in[2t_0,T]$.
\end{prop}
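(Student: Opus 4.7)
The plan is to differentiate the integral representation of $h^k$ in $x$ and estimate each of the two resulting terms by combining Young's inequality with $L^1$-estimates on $\nabla W_k$ obtained by rescaling \eqref{first-int-nablaW} and \eqref{second-int-nablaW}. Formally, differentiation under the integral gives
\begin{align*}
\nabla h^k(x,t) &= k^2\int_{t_0}^t e^{-k^2(s-t_0)}\,\bigl((\nabla W_k)(\cdot,t-s)*(J_k*u^k(\cdot,t_0))\bigr)(x)\,ds\\
&\quad -F(k)\int_{t_0}^t \bigl((\nabla W_k)(\cdot,t-s)*(u^k)^p(\cdot,s)\bigr)(x)\,ds.
\end{align*}
By Remark \ref{rmkulequL} and condition {\bf (F2)}, $\|u^k(\cdot,s)\|_\infty \le C_{t_0}$ for $s \ge t_0$, so $\|J_k*u^k(\cdot,t_0)\|_\infty \le C_{t_0}$ and $\|(u^k)^p(\cdot,s)\|_\infty \le C_{t_0}^p$; also $F(k)$ is bounded by {\bf (F3)}. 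Hence each convolution is controlled, via Young, by $\|\nabla W_k(\cdot,t-s)\|_{L^1}$ times a constant independent of $k$.

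The core ingredient is the rescaled $L^1$ bound
$$
\|\nabla W_k(\cdot,\tau)\|_{L^1} = k\,\|\nabla W(\cdot,k^2\tau)\|_{L^1} \le C_1\min\bigl(k^3\tau,\,\tau^{-1/2}\bigr),
$$
which follows directly from \eqref{first-int-nablaW} and \eqref{second-int-nablaW}. The two bounds balance at $\tau=k^{-2}$ with common value $C_1 k$, so $\|\nabla W_k(\cdot,\tau)\|_{L^1}$ is \emph{not} uniformly bounded near $\tau=0$; however, integrating in $\tau$ and splitting at $k^{-2}$ gives
$$
\int_0^{T-t_0}\|\nabla W_k(\cdot,\tau)\|_{L^1}\,d\tau \le \int_0^{k^{-2}}C_1 k^3\tau\,d\tau + \int_{k^{-2}}^{T-t_0}C_1\tau^{-1/2}\,d\tau \le \frac{C_1}{2k}+2C_1\sqrt{T-t_0},
$$
uniformly in $k$. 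This immediately yields a uniform bound on the second term in $\nabla h^k$.

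For the first term the prefactor $k^2$ looks dangerous, but the exponential $e^{-k^2(s-t_0)}$ localises $s$ near $t_0$, where $t-s$ is bounded \emph{away} from $0$ thanks to the hypothesis $t\ge 2t_0$. Concretely, splitting at $s_\ast=(t_0+t)/2$: on $[t_0,s_\ast]$ one has $t-s\ge (t-t_0)/2 \ge t_0/2$, so $\|\nabla W_k(\cdot,t-s)\|_{L^1}\le C_1(t_0/2)^{-1/2}$ and $k^2\int_{t_0}^{s_\ast}e^{-k^2(s-t_0)}\,ds\le 1$, giving a uniform bound. On $[s_\ast,t]$ the factor $e^{-k^2(s-t_0)} \le e^{-k^2 t_0/2}$ beats even the worst-case $\|\nabla W_k(\cdot,t-s)\|_{L^1}\le C_1 k$ together with the $k^2$ and the length of the interval, so that contribution tends to $0$ as $k\to\infty$.

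The main obstacle, and the reason a straightforward $L^1$-time estimate fails, is precisely that rescaling sends $\|\nabla W(\cdot,k^2\tau)\|_{L^1}$ into a norm that grows like $k$ at $\tau\sim k^{-2}$; the argument must use the two competing bounds on $\int|\nabla W|$ in the right regimes and, for the exponentially weighted term, exploit that the assumption $t\ge 2t_0$ (strictly bigger than $t_0$) forces $t-s$ to stay positive on the support of the exponential. Assembling the estimates yields $|\nabla h^k(x,t)|\le L$ uniformly for $x\in\R^N$ and $t\in[2t_0,T]$, with $L$ depending only on $t_0$, $T$, $N$, $J$, $\|u_0\|_\infty$ and the constants in {\bf (F2)}--{\bf (F3)}.
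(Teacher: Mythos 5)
Your proposal is correct and follows essentially the same route as the paper: differentiate $h^k$, use the two rescaled $L^1$ bounds $\|\nabla W_k(\cdot,\tau)\|_{L^1}\le C\min(k^3\tau,\tau^{-1/2})$, control the second term by the boundedness of $F(k)$ and $\|u^k\|_\infty$, and split the exponentially weighted first term so that the exponential absorbs the powers of $k$ near $s=t_0$ being impossible only where $t-s$ stays bounded away from zero. The only cosmetic differences are where you place the splits (at $(t_0+t)/2$ and $k^{-2}$ instead of the paper's $3t/4$, and the paper simply integrates $(t-s)^{-1/2}$ for the second term without splitting).
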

\begin{proof}
Recall that we have obtained estimates for the $L^1$ norm of $\nabla W(\cdot,t)$ (cf. \eqref{first-int-nablaW} and \eqref{second-int-nablaW}). Now we derive estimates for the $L^1$ norm of $\nabla W_k(x,t)=k k^N\nabla W(kx,k^2t)$.
 We have,
 $$\|\nabla W_k(\cdot,t)\|_{L^1(\R^N)}=k\|\nabla W(\cdot,k^2t)\|_{L^1(\R^N)}.$$

 Therefore,
 $$ \|\nabla W_k(\cdot,t)\|_{L^1(\R^N)}\le C k (k^2t)= C k^3 t.
 $$

 On the other hand,
$$ \|\nabla W_k(\cdot,t)\|_{L^1(\R^N)}\le C k (k^2t)^{-1/2}= C  t^{-1/2}.
 $$

 Differentiating the function $h^k$ we obtain,
 $$
 \begin{aligned}
 & |\nabla h^k(x,t)|\le k^2\int_{t_0}^t\int |\nabla W_k(x-y,t-s)|\, e^{-k^2(s-t_0)}\big(J_k*u^k(\cdot,t_0)\big)(y)\,dy\,ds\\
 &\hskip2cm +F(k)\int_{t_0}^t\int|\nabla W_k(x-y,t-s)|(u^k)^p(y,s)\,dy\,ds=
 I+II.
 \end{aligned}
 $$

 There holds,
 $$
 \begin{aligned}
 &I\le k^2 C_{t_0}\int_{t_0}^{3t/4} e^{-k^2(s-t_0)} (t-s)^{-1/2}\,ds+ k^2 C_{t_0}\int_{3t/4}^t e^{-k^2(s-t_0)}k^3(t-s)\,ds \\
 &\hskip2cm \le   C_{t_0}(t/4)^{-1/2}+C_{t_0} (t/4) k^3 e^{-k^2(\frac {3t}4-t_0)}\le C_{t_0,T}
 \end{aligned}
 $$

On the other hand,
\begin{align*}
& II\le C_{t_0}\int_{t_0}^t (t-s)^{-1/2}\,ds\le C_{t_0,T}.
\end{align*}
\end{proof}

Finally we prove one of our main results, namely, that the sequence
$\{h^k\}$ and therefore, the sequence $\{u^k\}$ is uniformly
convergent on compact sets.

\begin{teo}
There exists a subsequence that we still call $h^k$ which is uniformly convergent on every compact subset of $\R^N\times[2t_0,\infty)$.
\end{teo}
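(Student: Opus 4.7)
The strategy is a standard Arzelà–Ascoli argument: we already have uniform boundedness of $\{u^k\}$ on $\R^N\times[t_0,\infty)$ (via Remark \ref{rmkulequL} and the assumption {\bf (F2)}), which together with the integral representation of $h^k$ and the bound $\|W_k(\cdot,s)\|_{L^1}\le C$ coming from Theorem \ref{teo-barrier-W} gives a uniform $L^\infty$ bound on $h^k$ on $\R^N\times[2t_0,T]$ for every $T>2t_0$. Proposition \ref{lipschitz} already provides uniform equicontinuity in the space variable on these strips, so the only missing ingredient is uniform equicontinuity in the time variable.

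The plan for the time estimate is to differentiate $h^k(x,t)$ with respect to $t$. The derivative produces two kinds of terms: boundary terms at $s=t$ (which, after rescaling, are controlled by $\|J_k*u^k(\cdot,t_0)\|_\infty$ times $e^{-k^2(t-t_0)}$, hence vanish uniformly for $t\ge 2t_0$, together with $F(k)(u^k)^p(x,t)$ which is uniformly bounded) and interior terms of the form
$$
k^2\int_{t_0}^t\!\!\int (W_k)_t(x-y,t-s)\,e^{-k^2(s-t_0)}(J_k*u^k(\cdot,t_0))(y)\,dy\,ds
$$
and analogously for the absorption term. For these I would use the rescaled versions of the estimates on $W_t$ from Section 2. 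Specifically, $\|{(W_k)}_t(\cdot,\tau)\|_{L^1}=k^2\|W_t(\cdot,k^2\tau)\|_{L^1}$, which by \eqref{integral-Wt} is bounded by $C\tau^{-1}$ for large $k^2\tau$ and by $C(e^{-k^2\tau}+k^4\tau)$ otherwise; splitting the $s$-integral into $[t_0,3t/4]$ and $[3t/4,t]$ exactly as in the proof of Proposition \ref{lipschitz} produces a bound independent of $k$ provided $t\ge 2t_0$. This gives a uniform bound for $\partial_t h^k$ on $\R^N\times[2t_0,T]$.

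With uniform Lipschitz bounds in both $x$ and $t$, and a uniform $L^\infty$ bound, the family $\{h^k\}$ is equicontinuous and uniformly bounded on every compact set $\K\subset \R^N\times[2t_0,T]$. Arzelà–Ascoli yields a subsequence converging uniformly on $\K$. To conclude uniform convergence on every compact subset of $\R^N\times[2t_0,\infty)$ I would apply the standard Cantor diagonal procedure: exhaust $\R^N\times[2t_0,\infty)$ by the increasing sequence of compacts $\K_m=\overline{B_m}\times[2t_0,m]$, extract a subsequence convergent on $\K_1$, then a sub-subsequence convergent on $\K_2$, and so on; the diagonal subsequence converges uniformly on each $\K_m$, hence on every compact subset.

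The main obstacle I anticipate is the time-equicontinuity, because differentiating the integral defining $h^k$ in $t$ produces the singular kernel $(W_k)_t$ whose $L^1$ norm blows up as $s\to t$. The saving fact is precisely the refined barrier \eqref{barrier-Wt}, which after rescaling yields an integrable singularity of type $(t-s)^{-1}$ only in a bounded region, whereas near $s=t$ one uses the small-time bound $|V|\le Ct$ to handle the portion where the large-time decay fails. This is the same split used in Proposition \ref{lipschitz} and is the technical heart of the compactness argument.
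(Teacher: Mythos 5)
Your overall architecture (uniform bound, space equicontinuity from Proposition \ref{lipschitz}, time equicontinuity from a bound on $\partial_t h^k$, then Arzel\`a--Ascoli and a diagonal extraction) works for the first piece of $h^k$ but breaks down on the absorption term, and this is precisely the point where the paper is forced to argue differently. Write $h^k=H_0^k+H^k$, where $H_0^k$ carries the factor $e^{-k^2(s-t_0)}$ and $H^k(x,t)=F(k)\int_{t_0}^t\int W_k(x-y,t-s)(u^k)^p(y,s)\,dy\,ds$. Differentiating $H^k$ in $t$ gives $F(k)\int_{t_0}^t\int (W_k)_t(x-y,t-s)(u^k)^p(y,s)\,dy\,ds$ (the boundary term vanishes since $W_k(\cdot,0)=0$), and the only available bounds are $\|(W_k)_t(\cdot,\tau)\|_{L^1}\le C\tau^{-1}$ and $\|(W_k)_t(\cdot,\tau)\|_{L^1}\le k^2e^{-k^2\tau}+Ck^4\tau$. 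Unlike the gradient estimate, where the singularity $(t-s)^{-1/2}$ is integrable, here the singularity is $(t-s)^{-1}$, and since $(u^k)^p(\cdot,s)$ is merely bounded and carries no decay in $s$ away from $t_0$, splitting $[t_0,t]$ at $3t/4$ does not help: on $[3t/4,t]$ the optimal combination of the two bounds (crossing over at $t-s\sim k^{-2}$) still yields
\begin{equation*}
\int_{3t/4}^{t}\min\Bigl\{C(t-s)^{-1},\,k^2e^{-k^2(t-s)}+Ck^4(t-s)\Bigr\}\,ds\;\sim\;C\log k\;\longrightarrow\;\infty .
\end{equation*}
The exponential factor $e^{-k^2(s-t_0)}$ is exactly what saves $H_0^k$ (it kills the contribution of $s$ near $t\ge 2t_0$, where the kernel is singular), and that factor is absent from $H^k$. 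So the phrase ``analogously for the absorption term'' is where your argument fails.

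The paper's way around this is not to differentiate $H^k$ in time at all. It introduces $R^k(x,t)=F(k)\int_{t_0}^t\int U_\a(x-y,t-s)(u^k)^p(y,s)\,dy\,ds$ with $U_\a$ the Gaussian, proves $\|H^k-R^k\|_{L^\infty(\R^N\times[t_0,T])}\le C\,k^{-1/q}\to 0$ using the decay $\|W_k(\cdot,t)-U_\a(\cdot,t)\|_{q'}\le k^{-1/q}t^{-(N+1)/2q}$ together with a uniform $L^q$ bound on $(u^k)^p(\cdot,s)$ for $s\ge t_0$, and then observes that $R^k$ solves the heat equation $R^k_t-\a\Delta R^k=F(k)(u^k)^p$ with uniformly bounded right-hand side, so interior parabolic regularity gives uniform H\"older continuity of $R^k$ on $\R^N\times[2t_0,T]$. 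Compactness of $\{H^k\}$ then follows from compactness of $\{R^k\}$ plus the uniform convergence $H^k-R^k\to0$. Your treatment of $H_0^k$ and the final diagonal argument are fine, but to complete the proof you need some substitute for this comparison step; the $L^1$ bounds on $(W_k)_t$ alone cannot deliver time equicontinuity of the absorption term.
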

\begin{proof}
In order to prove the result, let us split $h^k$ into two terms.
\begin{align*}
h^k(x,t)&=k^2\int_{t_0}^t\int {W_k}(x-y,t-s) e^{-k^2(s-t_0)}\big(J_k*u^k(\cdot,t_0)\big)(y)\,dy\,ds\\
&\hskip2cm -F(k)\int_{t_0}^t\int {W_k}(x-y,t-s) (u^k)^p(y,s)\,dy\,ds\\
&=H_0^k(x,t)+ H^k(x,t).
\end{align*}
with
$$
H_0^k(x,t)=k^2\int_{t_0}^t\int {W_k}(x-y,t-s) e^{-k^2(s-t_0)}\big(J_k*u^k(\cdot,t_0)\big)(y)\,dy\,ds
$$
and
$$
H^k(x,t)=F(k)\int_{t_0}^t\int  W_k(x-y,t-s)(u^k)^p(y,s)\,dy\,ds.
$$

By the estimates of $\|W_t(\cdot,t)\|_{L^1(\R^N)}$ (cf. \eqref{integral-Wt}) we get
$$
\|{W_k}_t(\cdot,t)\|_{L^1}\le Ct^{-1}
$$
and
$$
\|{W_k}_t(\cdot,t)\|_{L^1}\le k^2 e^{-k^2t}+ C k^4 t.
$$

Therefore,
\begin{align*}
|{H_0^k}_t(x,t)|&\le k^2\int_{t_0}^t\int |{W_k}_t(x-y,t-s)| \, e^{-k^2(s-t_0)}\big(J_k*u^k(\cdot,t_0)\big)(y)\,dy\,ds\\
&\le Ck^2\int_{t_0}^{3t/4}(t-s)^{-1}C_{t_0}e^{-k^2(s-t_0)}\,ds+
Ck^2\int_{3t/4}^t k^2e^{-k^2(t-s)}C_{t_0}e^{-k^2(s-t_0)}\,ds\\
&+Ck^2\int_{3t/4}^t k^4(t-s)C_{t_0}e^{-k^2(s-t_0)}\,ds\\
&\le C_{t_0}(t/4)^{-1}+C_{t_0} (t/4)k^4e^{-k^2(t-t_0)}+C_{t_0}(t/4)k^4e^{-k^2(\frac{3t}4-t_0)}\\
&\le C_{t_0,T}.
\end{align*}

Therefore, the sequence $H_0^k$ has a subsequence that converges uniformly on compact subsets of $\R^N\times[2t_0,\infty)$.

In order to see that the same conclusion holds for the sequence $H^k$, let us define
$$
R^k(x,t):= F(k)\int_{t_0}^t\int  U_\a(x-y,t-s)(u^k)^p(y,s)\,dy\,ds,
$$
with $U_\a$ the fundamental solution of the heat equation with diffusivity $\a$. Then, for every $T>0$,
$$
\|R^k\|_{L^\infty(\R^N\times[t_0,T])}\le C_{t_0,T}
$$
and
$$
\|H^k-R^k\|_{L^\infty(\R^N\times[t_0,T])}\to 0\quad\mbox{as}\quad k\to \infty.
$$
In fact,
\begin{align*}
\|H^k(\cdot,t)-R^k(\cdot,t)\|_{L^\infty(\R^N)}\le F(k)\int_{t_0}^t\|W_k(\cdot,t-s)-U_\a(\cdot,t-s)\|_{q'}\|(u^k)^p(\cdot,s)\|_q\,ds.
\end{align*}

Recall that $f(k)\ge c k^\beta$ with $\beta=2/(p-1)$. Let us take $q>N/\beta p$. So that there holds,
$$
\|(u^k)^p(\cdot,s)\|_q\le \Big(\int\frac{dx}{\big(s^{1/2}+|x|\big)^{\beta pq}}\Big)^{1/q}\le C_{t_0,q} \quad\mbox{for}\quad s\ge t_0.
$$

On the other hand, since $U_\a(x,t)={U_\a}_k(x,t)$,
\begin{align*}
\|W_k(\cdot,t)-U_\a(\cdot,t)\|_{q'}&=\|W_k(\cdot,t)-{U_\a}_k(\cdot,t)\|_{q'}\\
&=k^{N/q}\|W(\cdot,k^2t)-U_\a(\cdot,k^2t)\|_{q'}\\
&\le k^{N/q}\,(k^2 t)^{-(N+1)/2q}=k^{-1/q} t^{-(N+1)/2q}\qquad\mbox{(cf. \cite{TW})}.
\end{align*}

Let us choose $q$ big enough so that we also have $q>(N+1)/2$. Then,
\begin{align*}
\|H^k(\cdot,t)-R^k(\cdot,t)\|_{L^\infty(\R^N)}&\le F(k) C_{q,t_0}k^{-1/q}\int_{t_0}^t(t-s)^{-(N+1)/2q}\,ds\\
&\le C_{q,t_0,T}\, k^{-\frac1q}\quad\mbox{if}\quad t_0\le t\le T.
\end{align*}

Hence,
$$
\|H^k-R^k\|_{L^\infty(\R^N\times[t_0,T])}\le  C_{q,t_0,T}\, k^{-\frac1q}\to 0\quad\mbox{as}\quad k\to\infty.
$$

Finally, observe that $R^k$ is a solution of the heat equation with diffusivity $\a$ and uniformly bounded right hand side. In fact,
$$
R^k_t(x,t)-\a \Delta R^k(x,t)=F(k)(u^k)^p(x,t)
$$
and
$$
0\le F(k)(u^k)^p(x,t)\le C_{t_0}\quad\mbox{if}\quad t\ge t_0.
$$

Therefore, the family $R^k$ is uniformly Holder continuous in $\R^N\times[2t_0,T]$ for every $T>2t_0$. We conclude that there exists a subsequence that is uniformly convergent on every compact subset of $\R^N\times[2t_0,\infty)$. And the same conclusion then holds for the family $H^k$.
\end{proof}

Now that we know that $\{u^k\}$ has a convergent subsequence, we
proceed in
identifying the limit function $U$. As a first step, and using the
assumptions on $F(k)$ in the introducion we establish the equation
satisfied by $U$.

\begin{prop}\label{U} Let $k_n\to\infty$ be such that $u^{k_n}\to U$ uniformly on compact sets of $\R^N\times(0,\infty)$. Then, $U$ is a solution to
\begin{equation}\label{eq-limit}
U_t-\a\Delta U=-c_0 U^p
\end{equation}
where $c_0=\lim_{k\to\infty}F(k)$.
\end{prop}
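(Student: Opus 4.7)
\textbf{Proof plan for Proposition \ref{U}.}

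My plan is to pass to the limit directly in the weak formulation of the rescaled equation \eqref{problem-rescaled}, exploiting that the subsequence $u^{k_n}$ already converges uniformly on compact subsets of $\R^N\times(0,\infty)$. The advantage of working with the weak form is that all derivatives are moved onto the test function, so the lack of smoothness in $u^k$ (the principal difficulty overcome earlier via the $h^k$ decomposition) plays no role here.

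\emph{Weak formulation.} Fix $\varphi\in C_c^\infty(\R^N\times(0,\infty))$. Since $u$ solves \eqref{eq3} classically and $u^k(x,t)=f(k)u(kx,k^2t)$, the function $u^k$ satisfies \eqref{problem-rescaled} in the classical sense for $t>0$; multiplying by $\varphi$, integrating over $\R^N\times(0,\infty)$, and integrating by parts in time, I obtain
\[
-\iint u^k\,\varphi_t\,dx\,dt=\iint (k^2 L_k u^k)\,\varphi\,dx\,dt-F(k)\iint (u^k)^p\,\varphi\,dx\,dt.
\]
By Fubini together with the symmetry $J_k(x-y)=J_k(y-x)$, the nonlocal diffusion term can be transferred onto the test function:
\[
\iint (k^2 L_k u^k)\,\varphi\,dx\,dt=\iint u^k\,(k^2 L_k\varphi)\,dx\,dt.
\]

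\emph{Passage to the limit along $k_n\to\infty$.} For the time derivative term, uniform convergence $u^{k_n}\to U$ on $\mathrm{supp}\,\varphi$ gives $-\iint u^{k_n}\varphi_t\to -\iint U\varphi_t$ by dominated convergence. For the diffusion term, the Taylor expansion argument recalled in the Introduction yields $k^2 L_k\varphi\to\a\Delta\varphi$ uniformly on compact sets whenever $\varphi$ is smooth, where $\a=\frac1{2N}\int J(x)|x|^2\,dx$; combined with the uniform convergence of $u^{k_n}$ on $\mathrm{supp}\,\varphi$, this gives $\iint u^{k_n}(k_n^2 L_{k_n}\varphi)\to\a\iint U\,\Delta\varphi$. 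For the absorption term, by Remark \ref{rmkulequL} and assumption {\bf (F2)} the functions $u^{k_n}$ are uniformly bounded on compact subsets of $\R^N\times[\tau,\infty)$ for every $\tau>0$, hence $(u^{k_n})^p\to U^p$ uniformly on $\mathrm{supp}\,\varphi$; together with $F(k_n)\to c_0$ from {\bf (F3)} this gives $F(k_n)\iint (u^{k_n})^p\varphi\to c_0\iint U^p\varphi$.

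\emph{Conclusion.} Collecting these limits, for every $\varphi\in C_c^\infty(\R^N\times(0,\infty))$,
\[
-\iint U\,\varphi_t\,dx\,dt=\a\iint U\,\Delta\varphi\,dx\,dt-c_0\iint U^p\,\varphi\,dx\,dt,
\]
which is the distributional form of \eqref{eq-limit}. Since $U$ is continuous, bounded on compact subsets of $\R^N\times(0,\infty)$, and the right-hand side $-c_0 U^p$ is continuous, standard parabolic regularity upgrades this distributional identity to a classical solution. The only nontrivial ingredient is the uniform convergence $k^2 L_k\varphi\to\a\Delta\varphi$ for smooth $\varphi$, but this is the same mechanism that drives the whole rescaling approach and has been invoked already in the Introduction; there is no regularity obstruction on the side of $u^k$ because all derivatives act on $\varphi$.
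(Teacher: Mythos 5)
Your proposal is correct and follows essentially the same route as the paper: both pass to the limit in the weak formulation, transferring $\varphi_t$ and the operator $k^2L_k$ onto the test function via symmetry of $J_k$, and then use the uniform convergence of $u^{k_n}$ on compact sets, the convergence $k^2L_k\varphi\to\a\Delta\varphi$ for smooth $\varphi$, the uniform bounds on $u^k$ for $t\ge\tau>0$, and ${\bf (F3)}$ to identify each limit. The paper merely organizes the same computation by starting from $\iint U(\varphi_t+\a\Delta\varphi)$ and estimating the difference with $\iint u^k(\varphi_t+k^2L_k\varphi)$, so there is no substantive difference.
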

\begin{proof}
For simplicity we drop the subscript $n$. Let $\varphi\in C_0^\infty(\R^N\times(0,\infty))$ and $\mathcal K$ a compact set containing its support. Then,
\begin{eqnarray*}
\int_0^\infty\int_{\R^N} U(x,t)\big(\varphi_t+\a \Delta\varphi\big)(x,t)\,dx\,dt&=&\int_0^\infty\int_{\R^N}u^k(x,t)
\big(\varphi_t+k^2
L_k\varphi\big)(x,t)\,dx\,dt+\\
&&\hspace{-20,5em}+\int_0^\infty\int_{\R^N}(U-u^k)(x,t)\big(\varphi_t+\a \Delta\varphi\big)(x,t)\,dx\,dt-\int_0^\infty\int_{\R^N}u^k(x,t)\big(k^2L_k\varphi-
\a\Delta\varphi\big)(x,t)\,dx\,dt\\
&=&-\int_0^\infty\int_{\R^N}\big(u^k_t-k^2L_ku^k\big)(x,t)\varphi(x,t)\,dx\,dt+\\
&&\hspace{-17em}+\int_0^\infty\int_{\R^N}(U-u^k)(x,t)\big(\varphi_t+\a \Delta\varphi\big)(x,t)\,dx\,dt
-\int_0^\infty\int_{\R^N}u^k(x,t)\,O\big(k^{-3}\big)\chi_{\mathcal K}\,
dx\,dt\\
&=&F(k)\int_0^\infty\int_{\R^N} (u^k)^p(x,t)\varphi(x,t)\,dx\,dt\\
&&\hspace{-17em}+\int_0^\infty\int_{\R^N}(U-u^k)(x,t)\big(\varphi_t
+\a \Delta\varphi\big)(x,t)\,dx\,dt
-\int_0^\infty\int_{\R^N}u^k(x,t)\,O\big(k^{-3}\big)\chi_{\mathcal K}\,
dx\,dt\\
&\to &\ c_0\int_0^\infty\int_{\R^N} U^p(x,t)\varphi(x,t)\,dx\,dt\quad\mbox{as }k\to\infty.
\end{eqnarray*}

So that,
$$
\int_0^\infty\int_{\R^N} U(x,t)\big(\varphi_t+\a \Delta\varphi\big)(x,t)\,dx\,dt=\
c_0\int_0^\infty\int_{\R^N} U^p(x,t)\varphi(x,t)\,dx\,dt
$$
for every$\varphi\in C_0^\infty(\R^N\times(0,\infty))$. Thus,
$$
U_t-\a\Delta U=-c_0 U^p.$$
\end{proof}

Finally, in order to identify the initial datum $U(x,0)$, we need to take into
account the behavior of $u_0$ at infinity. Moreover, it is necessary to
have some control on the $L^1$ and $L^p$ norms of $u^k$  on sets of the form $B_R\times(0,\tau)$ for $R,\tau>0$.

\begin{prop}\label{determineUa} Let $u$ be a solution to \eqref{eq3} and $u^k$ its rescaling. Let $k_n\to\infty$ and assume $u^{k_n}\to U$ as $n\to\infty$. Assume that for every $R>0$ there exists $C_R$ such that
$$
\int_0^\tau \int_{B_R} u^k(x,t)\,dx\,dt\le C_R\tau$$
and either there exists $\gamma>0$ such that
$$F(k)\int_0^\tau \int_{B_R} (u^k)^p(x,t)\,dx\,dt\le C_R\tau^\gamma
$$
or else, for every $R,\tau>0$,
$$
\lim_{k\to\infty}F(k)\int_0^\tau \int_{B_R} (u^k)^p(x,t)\,dx\,dt=0.
$$

Assume further that
$$
u_0^k(x) \to \phi(x)\quad(k\to\infty)\quad\mbox{in the sense of distributions.}
$$

Then,
there holds that $U$ is the solution to
$$
\begin{aligned}
&U_t-\a \Delta U=-c_0 U^p\\
&U(x,0)=\phi(x)
\end{aligned}
$$
\end{prop}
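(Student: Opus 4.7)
The plan is to derive a weak formulation of the equation for $u^k$ that keeps track of the initial datum, pass to the limit $k\to\infty$, and recognize the resulting identity as the distributional formulation of the Cauchy problem for $U_t-\a\Delta U=-c_0U^p$ with datum $\phi$. Since Proposition~\ref{U} already gives the PDE for $U$ on $\R^N\times(0,\infty)$, only the initial trace needs to be identified.

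Fix $\varphi\in C_0^\infty(\R^N)$ and $\tau>0$ and pick a smooth cutoff $\eta_\tau:[0,\infty)\to[0,1]$ with $\eta_\tau(0)=1$, $\eta_\tau\equiv 0$ on $[\tau,\infty)$ and $\eta_\tau'\equiv 0$ on $[0,\tau/2]$. I would test \eqref{problem-rescaled} against $\psi=\varphi\,\eta_\tau$, integrate by parts in $t$ (the only boundary term is $\int u_0^k\varphi\,dx$, since $\eta_\tau(\tau)=0$), and use the self-adjointness $\int(L_ku^k)\varphi\,dx=\int u^k(L_k\varphi)\,dx$ given by the symmetry of $J_k$. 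This produces
\begin{multline*}
\int u_0^k\varphi\,dx=-\int_0^\tau\!\!\int u^k\varphi\,\eta_\tau'\,dx\,dt-\int_0^\tau\!\!\int u^k\,(k^2L_k\varphi)\,\eta_\tau\,dx\,dt\\
+F(k)\int_0^\tau\!\!\int (u^k)^p\varphi\,\eta_\tau\,dx\,dt.
\end{multline*}

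Then I would pass to the limit $k\to\infty$ term by term. The left-hand side converges to $\int\phi\varphi$ by hypothesis. Since $\eta_\tau'$ is supported in $[\tau/2,\tau]$, the first right-hand-side term only depends on $u^k$ on the compact set $\overline{B_R}\times[\tau/2,\tau]$ (with $B_R\supset\mathrm{supp}\,\varphi$), where $u^k\to U$ uniformly, so it converges to $-\int_0^\tau\!\int U\varphi\,\eta_\tau'$. For the second term, a Taylor expansion of $\varphi$ (as already used in the proof of Proposition~\ref{U}) gives $k^2L_k\varphi\to\a\Delta\varphi$ uniformly, with error $O(k^{-1})$; splitting the time integral at some $\delta>0$, the piece over $[\delta,\tau]$ converges by uniform convergence and the piece over $[0,\delta]$ is bounded by $C_\varphi C_R\delta$ via the $L^1$ hypothesis, so letting $\delta\to 0$ yields $-\a\int_0^\tau\!\int U\eta_\tau\Delta\varphi$. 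The last term is handled in the same way: under the first alternative the $[0,\delta]$ tail is $O(\delta^\gamma)$ uniformly in $k$, while under the second alternative the whole integral tends to $0$ and $c_0=0$, so in either case the limit is $c_0\int_0^\tau\!\int U^p\varphi\,\eta_\tau$. The resulting identity
\begin{multline*}
\int\phi\varphi\,dx=-\int_0^\tau\!\!\int U\varphi\,\eta_\tau'\,dx\,dt-\a\int_0^\tau\!\!\int U\eta_\tau\Delta\varphi\,dx\,dt\\
+c_0\int_0^\tau\!\!\int U^p\varphi\,\eta_\tau\,dx\,dt,
\end{multline*}
holding for every admissible $\varphi,\tau,\eta_\tau$, is exactly the distributional Cauchy problem for $U$ with initial datum $\phi$, and together with Proposition~\ref{U} it identifies $U$.

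The principal obstacle is that $u_0^k(x)=f(k)u_0(kx)$ is unbounded as $k\to\infty$ in the nonintegrable case, and the uniform convergence $u^k\to U$ holds only on compact subsets of $\R^N\times(0,\infty)$, not up to $t=0$. The assumed $L^1$ and $L^p$ bounds for $u^k$ on $B_R\times(0,\tau)$ are precisely the tools that allow one to truncate the time integrals near $t=0$ with an error that is controlled uniformly in $k$ (linear in $\delta$ for the $L^1$ term, $\delta^\gamma$ or $o_k(1)$ for the $L^p$ term), so the limit can be justified.
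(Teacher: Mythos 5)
Your argument is correct and is essentially the paper's own proof: both pass to the limit in the weak formulation against test functions that do not vanish at $t=0$, using the assumed $L^1$ and $L^p$ bounds to control the time integrals near $t=0$ uniformly in $k$, uniform convergence on compact subsets of $\R^N\times(0,\infty)$ away from $t=0$, and the distributional convergence $u_0^k\to\phi$ for the boundary term. Your restriction to product test functions $\varphi(x)\eta_\tau(t)$ (versus the paper's general $\varphi\in C_0^\infty(\R^N\times[0,\infty))$) is a cosmetic difference, and your aside that $c_0=0$ under the second alternative is unnecessary — there the absorption term vanishes in the limit regardless, which is all that is needed.
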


\begin{proof}
We proceed as in the proof of Proposition \ref{U}. We drop the subscript $n$. Let $\varphi\in C_0^\infty(\R^N\times[0,\infty))$, $R>0$ such that $\varphi(x,t)=0$ if $|x|>R$ and let $\ep>0$. Let $\tau>0$, $k_0>0$ be such that for $k\ge k_0$,
$$
\begin{aligned}
&\int_0^\tau\int_{B_R} u^k(x,t)\,dx\,dt+F(k)\int_0^\tau\int_{B_R} (u^k)^p(x,t)\,dx\,dt<\ep\\
&\int_0^\tau\int_{B_R} U(x,t)\,dx\,dt+c_0\int_0^\tau\int_{B_R} U^p(x,t)\,dx\,dt<\ep
\end{aligned}
$$

Then,
$$
\begin{aligned}
&\Big|\int_0^\infty\int_{\R^N} U(x,t)\big(\varphi_t+\a \Delta\varphi\big)(x,t)\,dx\,dt-\int_0^\infty\int_{\R^N} c_0 U^p(x,t)\varphi(x,t)\,dx\,dt\\
+&\int_{\R^N} \phi(x)\varphi(x,0)\,dx\Big|\le C\Big|\int_0^\tau\int_{B_R} U(x,t)\,dx\,dt\Big|+C\Big|c_0\int_0^\tau\int_{B_R} U^p(x,t)\,dx\,dt\Big|\\
+&C\int_\tau^\infty\int_{B_R}|U-u^k|(x,t)\,dx\,dt+ \int_\tau^\infty\int_{B_R}\big|\,c_0 U^p-F(k)(u^k)^p\big|(x,t)\,dx\,dt\\
+&\int_\tau^\infty\int_{B_R}u^k(x,t) O(k^{-3})\,dx\,dt+C\Big|\int_0^\tau\int_{B_R} u^k(x,t)\,dx\,dt\Big|+C\Big|F(k)\int_0^\tau\int_{B_R} (u^k)^p(x,t)\,dx\,dt\Big|\\
+&\Big|\int_{\R^N}u_0^k(x)\varphi(x,0)\,dx-\int_{\R^N}\phi(x)\varphi(x,0)\,dx\Big|\le
2C\ep+C\int_\tau^\infty\int_{B_R}|U-u^k|(x,t)\,dx\,dt\\
+ &\int_\tau^\infty\int_{B_R}\big|\,c_0 U^p-F(k)(u^k)^p\big|(x,t)\,dx\,dt+
\int_\tau^\infty\int_{B_R}u^k(x,t) O(k^{-3})\,dx\,dt\\
+&\Big|\int_{\R^N}u_0^k(x)\varphi(x,0)\,dx-\int_{\R^N}\phi(x)\varphi(x,0)\,dx\Big|
\end{aligned}
$$

Therefore, taking $\limsup_{k\to\infty}$ we obtain,
$$\hspace{-5em}\Big|\int_0^\infty\int_{\R^N} U(x,t)\big(\varphi_t+\a \Delta\varphi\big)(x,t)\,dx\,dt-\int_0^\infty\int_{\R^N} c_0 U^p(x,t)\varphi(x,t)\,dx\,dt+$$
$$+\int_{\R^N} \phi(x)\varphi(x,0)\,dx\Big|\le 2C\ep.
$$

As $\ep$ is arbitrary, the proposition is proved.
\end{proof}

In order to prove the bounds assumed in the previous proposition, and
hence be able to completely determine $U$, we need to consider separate
cases, according to the behavior of $u_0$ at infinity. We begin with
the case where $u_0$ behaves as a power $-\alpha>-N$.

\begin{lema}\label{lemmaalpha} Let $u$ be the solution to \eqref{eq3}.
 Assume $(1+|x|)^{\alpha}u(x,t)\le B$ and $(1+t)^{\alpha/2}u(x,t)\le B$  with $0<\alpha< N$.
Then,  for every $R>0$ there exists $C_R$ such that,
for $k\ge \tau^{-1/2}$,
\begin{align*}
&\int_0^\tau \int_{B_R}u^k(x,t)\,dx\,dt\le C_R\tau\\
F(k)&\int_0^\tau\int_{B_R} (u^k)^p(x,t)\,dx\,dt\le C_R k^{-\alpha(p-1)+2}\tau\le C_R\tau\hskip2.3cm\;\mbox{if }\ N-\alpha p>0\\
F(k)&\int_0^\tau\int_{B_R} (u^k)^p(x,t)\,dx\,dt\le C_R k^{-\alpha(p-1)+2}\tau|\log\tau|\le C_R\tau|\log\tau|\;\mbox{if }\ N-\alpha p=0\\
F(k)&\int_0^\tau\int_{B_R} (u^k)^p(x,t)\,dx\,dt\le C k^{-\alpha(p-1)+2} \tau^{\frac{N-\alpha p+2}2}\le C \tau^{\frac{N-\alpha p+2}2}
\hskip.4cm\;\mbox{if }\ 0>N-\alpha p>-2\\
F(k)&\int_0^\tau\int_{B_R} (u^k)^p(x,t)\,dx\,dt\le C k^{\alpha-N}\le C_R\tau^{\frac{N-\alpha}2}\hskip2.9cm\;\mbox{if }\ N-\alpha p<-2\\
F(k)&\int_0^\tau\int_{B_R} (u^k)^p(x,t)\,dx\,dt\le C k^{\alpha-N}\log(1+k^2\tau)\le C \tau^{(N-\alpha)/2}\hskip.4cm\; \mbox{if }\ N-\alpha p=-2.
\end{align*}
\end{lema}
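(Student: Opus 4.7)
The plan is to reduce the whole lemma to a single change of variables in a common model integral, and then to carry out the elementary estimation of that integral in five subcases. From the two pointwise hypotheses on $u$ one immediately gets
\[
u^k(x,t)=f(k)\,u(kx,k^2t)\le B f(k)\,\min\!\Big\{(1+|kx|)^{-\alpha},\,(1+k^2t)^{-\alpha/2}\Big\},
\]
and, in the Example 1 setting where $f(k)\sim k^\alpha$, the substitution $y=kx$, $s=k^2t$ reduces every integral in the statement to the model quantity
\[
f(k)^q k^{-N-2}\int_0^{k^2\tau}\!\!\int_{|y|\le kR}\min\!\Big\{(1+|y|)^{-\alpha q},\,(1+s)^{-\alpha q/2}\Big\}\,dy\,ds,
\]
with $q=1$ for the $L^1$ inequality and, after multiplication by $F(k)=f(k)^{1-p}k^2$, with $q=p$ for the five $L^p$ inequalities (so that the prefactor becomes $f(k)k^{-N}\sim k^{\alpha-N}$).

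My next step is to estimate the inner integral by splitting the $y$-region at $|y|=1+\sqrt s$: on $|y|\le 1+\sqrt s$ the minimum equals $(1+s)^{-\alpha q/2}$ and contributes $\sim s^{(N-\alpha q)/2}$, while on $|y|>1+\sqrt s$ the minimum equals $(1+|y|)^{-\alpha q}$ and the contribution depends on the sign of $N-\alpha q$. For $q=1$, since $\alpha<N$, the tail gives $\sim (kR)^{N-\alpha}$, and after the $s$-integration the prefactor $f(k)k^{-N-2}$ brings the bound down to $C[R^{N-\alpha}\tau+\tau^{(N-\alpha)/2+1}]\le C_R\tau$ on bounded $\tau$-intervals. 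For $q=p$ the five subcases match the five displayed estimates one-to-one: if $N-\alpha p>0$ the tail dominates and gives $k^{-\alpha(p-1)+2}R^{N-\alpha p}\tau$; if $N-\alpha p=0$ the tail is logarithmic and produces $\tau|\log\tau|$ after the $s$-integration; if $0>N-\alpha p>-2$ the local-in-space piece dominates and $\int_0^{k^2\tau}s^{(N-\alpha p)/2}\,ds$ yields the power $\tau^{(N-\alpha p+2)/2}$; if $N-\alpha p<-2$ both integrals are uniformly bounded and only the prefactor $k^{\alpha-N}$ survives; and if $N-\alpha p=-2$ the borderline $\int s^{-1}\,ds$ produces $\log(1+k^2\tau)$.

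The final simplifications in the statement follow from the standing hypothesis $k\ge\tau^{-1/2}$: in subcase $N-\alpha p<-2$ one has $k^{\alpha-N}\le\tau^{(N-\alpha)/2}$ directly, and in the borderline $N-\alpha p=-2$ case the extra $\log(1+k^2\tau)$ is harmless because $u^{(\alpha-N)/2}\log(1+u)$ stays bounded on $[1,\infty)$ when $\alpha<N$. The second inequalities ``$\le C_R\tau$'' and ``$\le C\tau^{(N-\alpha p+2)/2}$'' in the first three subcases use $k^{-\alpha(p-1)+2}\le 1$, i.e.\ the critical/supercritical condition $p\ge 1+2/\alpha$, which is the regime of interest. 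The main obstacle I foresee is not any single computation but rather the bookkeeping at the thresholds $N=\alpha p$ and $N=\alpha p-2$: there the logarithmic factors must be tracked carefully through the change of variables, and in particular one has to check that $\log(kR)$ (arising from the $y$-tail) is absorbed by $|\log\tau|$ uniformly in $k\ge\tau^{-1/2}$. Away from those thresholds the whole proof reduces to one elementary Gaussian-type tail computation.
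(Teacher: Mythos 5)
Your proposal is correct and follows essentially the same route as the paper: the paper likewise reduces to $f(k)k^{-N}\int_0^{k^2\tau}\int_{B_{Rk}}u^p$, splits the spatial integral at $|x|=\sqrt t$ (time bound inside, space bound outside, which is exactly your $\min$-splitting), and then runs the same five-case time integration, invoking $k\ge\tau^{-1/2}$ in the same two borderline places. The one obstacle you flag, absorbing $\log(kR)$ at the threshold $N=\alpha p$, resolves exactly as in the paper: the substitution $s=k^2t$ turns $\log(Rk/\sqrt{s})$ into $\log(R/\sqrt{t})$ with no residual $k$-dependence, yielding $C_R\tau|\log\tau|$.
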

\begin{proof}
We begin with the estimate of the integral of $u^k$.
\begin{eqnarray}\label{ualpha}
\int_0^\tau\int_{B_R} u^k(x,t)\,dx\,dt&=&f(k) k^{-N-2}\int_0^{k^2\tau}\int_{B_{Rk}}u(x,t)\,dx\,dt\\
\nonumber&\le& Ck^{\alpha-N-2}\int_0^{k^2\tau}\int_{B_{Rk}}\frac1{(1+|x|)^\alpha}\,dx\,dt\\
\nonumber&\le& C k^{\alpha-N-2}k^2\tau (Rk)^{N-\alpha}\le C_R\tau.
\end{eqnarray}
Let us now estimate the integral of $(u^k)^p$.
There holds,
\begin{eqnarray}\label{up}
F(k)\int_0^\tau\int_{B_R} (u^k)^p(x,t)\,dx\,dt&=& F(k) f(k)^pk^{-N-2}\int_0^{k^2\tau}\int_{B_{Rk}}u^p(x,t)\,dx\,dt\\
\nonumber&=&f(k)k^{-N}\int_0^{k^2\tau}\int_{B_{Rk}}u^p(x,t)\,dx\,dt.
\end{eqnarray}

\medskip

We consider several cases.

\medskip

\noindent{\tt Case 1}: $N-\alpha p>0$

 We have,
\begin{eqnarray*}
F(k)\int_0^\tau\int_{B_R} (u^k)^p(x,t)\,dx\,dt& \le& C  F(k) k^{\alpha p-N-2}\int_0^{k^2\tau}\int_{B_{Rk}}\frac1{(1+|x|)^{\alpha p}}\,dx\,dt\\
&\le& C F(k) k^{\alpha p-N-2}k^2\tau(Rk)^{N-\alpha p}\\
&=&C_R k^{-\alpha(p-1)+2}\tau\le C_R\tau.
\end{eqnarray*}

\noindent{\tt Case 2}: $N-\alpha p<0$

 First,
\begin{eqnarray}\label{up-space}
\int_{B_{Rk}} u^p(x,t)\,dx&\le& C\int_{|x|\le\sqrt t} (1+t)^{-\alpha p/2}\,dx+C
\int_{\sqrt t<|x|<Rk}\frac1{(1+|x|)^{\alpha p}}\,dx\\
\nonumber&\le& C (1+t)^{\frac{N-\alpha p}2}.
\end{eqnarray}

\medskip

Assume $k\ge \tau^{-1/2}$. We consider 3 subcases.

\medskip

\begin{enumerate}
\item[(i)] $-2<N-\alpha p<0$.
\begin{eqnarray*}
F(k)\int_0^\tau\int_{B_R} (u^k)^p(x,t)\,dx\,dt&\le &C k^{\alpha -N}\int_0^{k^2\tau}(1+t)^\frac{N-\alpha p}2\,dt\\
&\le &C k^{\alpha -N}(k^2\tau)^{\frac {N-\alpha p+2}2}\\
&=&C k^{-\alpha(p-1)+2}\tau^{\frac {N-\alpha p+2}2}\\
 &\le &C\tau^{\frac {N-\alpha p+2}2}.
\end{eqnarray*}

\item[(ii)] $N-\alpha p=-2$.
\begin{eqnarray*}
F(k)\int_0^\tau\int_{B_R} (u^k)^p(x,t)\,dx\,dt&\le& C k^{\alpha
  -N}\int_0^{k^2\tau}(1+t)^{-1}\,dt\\
&=&C k^{\alpha -N}\log(1+k^2\tau)\\
&=&C(k^2\tau)^{(\alpha-N)/2}\log(1+k^2\tau)\tau^{(N-\alpha)/2}\\
&\le& C \tau^{(N-\alpha)/2}.
\end{eqnarray*}

\item[(iii)] $N-\alpha p<-2$.
\begin{eqnarray*}
F(k)\int_0^\tau\int_{B_R} (u^k)^p(x,t)\,dx\,dt&\le& C k^{\alpha -N}\int_0^{k^2\tau}(1+t)^{(N-\alpha p)/2}\,dt\\
&\le& C k^{\alpha -N}\le C\tau^{(N-\alpha)/2}.
\end{eqnarray*}

\end{enumerate}

\medskip

\noindent{\tt Case 3}: $N-\alpha p=0$

Instead of \eqref{up-space} we have,
$$
\int_{B_{Rk}}u^p(x,t)\,dx\le C\Big(1+\log\frac{Rk}{\sqrt t}\Big).
$$
Thus,
\begin{eqnarray*}
F(k)\int_0^\tau\int_{B_R} (u^k)^p(x,t)\,dx\,dt&\le& C k^{\alpha-N}\int_0^{k^2\tau} \Big(1+\log\frac{Rk}{\sqrt t}\Big)\,dt\\
&=&C k^{\alpha-N+2}\int_0^{\tau} \Big(1+\log\frac{R}{\sqrt
  t}\Big)\,dt\\
&\le& C_R k^{-\alpha(p-1)+2}\tau|\log\tau|\le C_R \tau|\log\tau|.
\end{eqnarray*}\end{proof}

\begin{coro} Assume $|x|^\alpha u_0(x)\to A>0$ as $|x|\to\infty$ with $0<\alpha<N$. Let $u$ be the solution to \eqref{eq3}. Then, $u$ satisfies the conclusions of Lemma \ref{lemmaalpha}
\end{coro}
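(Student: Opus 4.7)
The plan is to reduce the corollary to a direct application of Lemma \ref{lemmaalpha}, for which I need only verify its two structural hypotheses, namely $(1+|x|)^\alpha u(x,t)\le B$ and $(1+t)^{\alpha/2}u(x,t)\le B$. Both of these will come from the barriers established in Section 2 together with Remark \ref{rmkulequL}.

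First I would compute the asymptotics of $f(k)=k^N/\int_{B_k}u_0$ under the hypothesis $|x|^\alpha u_0(x)\to A>0$. Writing $\int_{B_k}u_0=\int_{B_1}u_0+\int_{1\le|x|\le k}u_0(x)\,dx$ and using that $u_0(x)\sim A|x|^{-\alpha}$ at infinity with $0<\alpha<N$ (so the integral over $\{1\le|x|\le k\}$ is dominated by the shell near $|x|=k$), one obtains $\int_{B_k}u_0\sim \frac{A\omega_{N-1}}{N-\alpha}k^{N-\alpha}$, and therefore $f(k)\sim c k^{\alpha}$ as $k\to\infty$. In particular there exist $c_1,c_2>0$ and $k_0$ such that $c_1 k^\alpha\le f(k)\le c_2 k^\alpha$ for $k\ge k_0$. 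One also checks at once that the conditions (F1), (F2), (F3) hold in this setting (indeed Example 1 in the introduction lists precisely this case), so Proposition \ref{barrier-uL} applies.

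Next, by Remark \ref{rmkulequL}, $0\le u(x,t)\le u_L(x,t)$, and Proposition \ref{barrier-uL} gives $f(|x|)u_L(x,t)\le C$ and $f(t^{1/2})u_L(x,t)\le C$. Combining with the lower bound on $f$ yields $|x|^\alpha u(x,t)\le C$ whenever $|x|\ge k_0$, and $t^{\alpha/2}u(x,t)\le C$ whenever $t^{1/2}\ge k_0$. Since $u$ is globally bounded (by $\|u_0\|_\infty$, which follows from the comparison argument in Remark \ref{rmkulequL}), the estimates extend to the whole range by enlarging the constant: one gets
\[
(1+|x|)^\alpha u(x,t)\le B\qquad\text{and}\qquad (1+t)^{\alpha/2}u(x,t)\le B,
\]
which are exactly the hypotheses of Lemma \ref{lemmaalpha}.

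Finally, with these two pointwise bounds in hand, Lemma \ref{lemmaalpha} gives directly the listed estimates on $\int_0^\tau\!\int_{B_R}u^k$ and on $F(k)\int_0^\tau\!\int_{B_R}(u^k)^p$, across all five regimes of $N-\alpha p$. The only delicate point is the verification $f(k)\sim k^\alpha$ and the fact that the $f$ appearing in the definition of $u^k$ in the corollary matches the $f$ used in Proposition \ref{barrier-uL}; this is where I would be careful, but both reduce to the same computation on $\int_{B_k}u_0$. No further work beyond assembling Remark \ref{rmkulequL}, Proposition \ref{barrier-uL} and Lemma \ref{lemmaalpha} is needed.
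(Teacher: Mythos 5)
Your proposal is correct and follows essentially the same route as the paper: invoke Remark \ref{rmkulequL} to reduce to $u_L$, apply Proposition \ref{barrier-uL} with $f(k)\sim k^\alpha$ (which the paper simply takes to equal $k^\alpha$), deduce the two pointwise bounds, and conclude by Lemma \ref{lemmaalpha}. Your additional care about the asymptotics of $\int_{B_k}u_0$ and the extension of the bounds to small $|x|$ and small $t$ merely fills in details the paper leaves implicit.
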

\begin{proof} By Remark \ref{rmkulequL}, the estimates of
Proposition \ref{barrier-uL}
 hold for $u$. In this case, since $f(k)\sim k^{\alpha}$ we take
it to be equal and hence,
$$(1+|x|)^\alpha u(x,t)\le C\quad {\rm and }\quad  (1+t)^{\alpha/2} u(x,t)\le C.$$

Thus, $u$ satisfies the assumptions of Lemma \ref{lemmaalpha}.
\end{proof}

\medskip

Using Lemma \ref{lemmaalpha}, we are able to prove with almost no computations,
the desired estimates in the other examples considered in the introduction.  We use the ideas of Kamin-Ughi in \cite{KU}.

Let us introduce some notation. For $\mu>0$, let
\begin{equation}\label{ukmu}
u^{k,\mu}(x,t)=k^\mu u(kx,k^2t).
\end{equation}

Then,
\begin{equation}\label{upmu}
F(k)\int_0^\tau\int_{B_R}(u^k)^p(x,t)\,dx\,dt=f(k)k^{-\mu}k^{-\mu(p-1)+2}\int_0^\tau\int_{B_R} (u^{k,\mu})^p(x,t)\,dx\,dt.
\end{equation}

We begin with the case where $u_0$ behaves as $|x|^{-N}$ at
infinity.
\begin{lema}Assume $|x|^N u_0(x)\to A>0$ as $|x|\to\infty$. Let  $p>1+2/N$. Then,  for every $R>0$ there exists $C_R$  such that,
\begin{align}
&\int_0^\tau\int_{B_R} u^k(x,t)\,dx\,dt\le C_R\tau\\
 F(k)&\int_0^\tau\int_{B_R} (u^k)^p(x,t)\,dx\,dt\le \frac C{\log k}.\label{N}
\end{align}
\end{lema}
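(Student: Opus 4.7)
My plan begins with the change of variables $y=kx$, $s=k^2t$. With $f(k)=k^N/\log k$ one has $F(k)f(k)^p k^{-N-2}=f(k)k^{-N}=1/\log k$, so the two quantities in the statement read
\begin{align*}
\int_0^\tau\!\int_{B_R} u^k\,dx\,dt &= \frac{1}{k^2\log k}\int_0^{k^2\tau}\!\int_{B_{Rk}} u(y,s)\,dy\,ds,\\
F(k)\int_0^\tau\!\int_{B_R} (u^k)^p\,dx\,dt &= \frac{1}{\log k}\int_0^{k^2\tau}\!\int_{B_{Rk}} u^p(y,s)\,dy\,ds.
\end{align*}
Both bounds thus reduce to space--time estimates of $u$ and $u^p$ on $B_{Rk}\times(0,k^2\tau)$. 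Throughout I will use $u\le u_L$ together with the pointwise bounds provided by Proposition~\ref{barrier-uL} and Remark~\ref{rmkulequL}, which in the present example give $u(y,s)\le C\log(1+|y|)/(1+|y|)^N$ and $u(y,s)\le C\log(1+\sqrt{s})/(1+s)^{N/2}$.

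For \eqref{N} the plan is to prove that $\int_0^\infty\!\int_{\R^N} u^p\,dy\,ds<\infty$ independently of $k$ and $\tau$, whence dividing by $\log k$ yields the conclusion. For small $s$, the kernel $U_J=e^{-s}\delta+W$ is a probability measure, so Young's inequality gives $\|u(\cdot,s)\|_p\le\|u_L(\cdot,s)\|_p\le\|u_0\|_p$; and $u_0\in L^p(\R^N)$ because $u_0$ is bounded and $u_0(x)\sim A|x|^{-N}$ at infinity, so $|u_0|^p\sim A^p|x|^{-Np}$ is integrable for every $p>1$. For large $s$, splitting $\int_{\R^N} u^p\,dy$ at $|y|=\sqrt{s}$ and using the two pointwise bounds produces $\int u^p\,dy\le C(\log s)^p s^{-N(p-1)/2}$, which is integrable in $s$ at infinity precisely when $p>1+2/N$, as hypothesized.

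For the $L^1$ estimate I would prove the uniform-in-$s$ bound $\int_{B_{Rk}} u(y,s)\,dy\le C_R\log k$ for $0\le s\le R^2 k^2$; integrating in $s$ on $(0,k^2\tau)$ then yields $C_R k^2\tau\log k$, which divided by $k^2\log k$ is $C_R\tau$. To obtain this bound, decompose $u_L(y,s)=e^{-s}u_0(y)+(W(\cdot,s)*u_0)(y)$. The first piece integrates to at most $\int_{B_{Rk}} u_0\le C+CA\log(Rk)\le C_R\log k$, directly from the asymptotics of $u_0$. For the second piece I would write $\int_{B_{Rk}}(W*u_0)\,dy=\int u_0(z)\bigl(\int_{B_{Rk}} W(y-z,s)\,dy\bigr)\,dz$ and split the $z$-integral at $|z|=2Rk$: on $|z|\le 2Rk$ use $\int_{B_{Rk}} W(\cdot-z,s)\,dy\le\|W(\cdot,s)\|_1\le 1$ to bound again by $\int_{|z|\le 2Rk} u_0\le C_R\log k$; on $|z|>2Rk$ use $|y-z|\ge|z|/2$ together with the sharp barrier $W\le Cs/|y-z|^{N+2}$ from Theorem~\ref{teo-barrier-W} to produce an $O\bigl(s/(Rk)^2\bigr)=O(1)$ contribution for $s\le R^2 k^2$.

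The main obstacle is the $L^1$ estimate, because the $\log k$ in the denominator leaves no slack: it must be compensated exactly by the $\log k$ arising from $\int_{B_k} u_0\sim A\omega_{N-1}\log k$. Unlike the power-law case of Lemma~\ref{lemmaalpha}, any crude bound replacing $u_0$ by its pointwise majorant $C\log|y|/|y|^N$ would introduce an extra logarithm, so one must track logarithms precisely through the $u_L$-decomposition and through the cancellation supplied by the far-field barrier on $W$. The $L^p$ bound, by contrast, gains the extra factor $1/\log k$ essentially for free; via Proposition~\ref{determineUa} this is the signal that the absorption term vanishes in the limit ($c_0=0$ for Example~2), as expected when $p$ is strictly supercritical.
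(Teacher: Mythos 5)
Your argument is correct, and the two halves compare differently with the paper. For the $L^1$ bound you do essentially what the paper does: decompose $u\le u_L=e^{-t}u_0+W*u_0$, split the convolution at $|z|\sim Rk$, use $\|W(\cdot,s)\|_{L^1}\le 1$ on the near range to pick up exactly the $\log(Rk)\sim\log k$ coming from $\int_{B_{Rk}}u_0$, and use the barrier of Theorem \ref{teo-barrier-W} on the far range to get the $O(s/(Rk)^2)$ remainder; this is the paper's computation almost verbatim, and your remark that the logarithm must be tracked exactly (no slack) is precisely why the paper also avoids the crude pointwise majorant here. For the $L^p$ bound your route is genuinely different in presentation: you prove directly that $\int_0^\infty\int_{\R^N}u^p\,dy\,ds<\infty$, using $L^p$-contraction of the (probability-measure) kernel $e^{-s}\delta+W(\cdot,s)$ for small $s$ together with $u_0\in L^p$, and the two pointwise decay bounds from Proposition \ref{barrier-uL} split at $|y|=\sqrt{s}$ for large $s$, where $p>1+2/N$ gives convergence of the time integral. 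The paper instead introduces the auxiliary rescaling $u^{k,\mu}$ with $\mu<N$ chosen so that $\mu p>N+2$, and invokes identity \eqref{upmu} together with Lemma \ref{lemmaalpha}, Case 2 (iii) — whose content, once unwound, is exactly the uniform boundedness of the space-time integral of $u^p$ that you establish by hand. Your version is more self-contained and even slightly sharper (you keep the logarithms rather than sacrificing a small power $N-\mu$), while the paper's Kamin--Ughi reduction has the advantage of recycling one lemma uniformly across all the logarithmically perturbed examples; both yield the same $C/\log k$. The only points to make explicit in a final write-up are the standing normalizations already implicit in the paper (e.g.\ $\tau\le 1$ and $k$ bounded below, so that terms like $C_Rk^{-2}$ from the $e^{-s}u_0$ piece are absorbed into $C_R\tau$), none of which is a genuine gap.
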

\begin{proof}
Recall that in this case $f(k)\sim \frac{k^N}{\log k}$ for $k\ge2$. Let $0<\mu<N$ to be chosen later. Then, for $|x|\ge1$, $t\ge1$,
$$
|x|^\mu u(x,t)\le C\frac{|x|^N}{\log(1+|x|)} u(x,t)\le B,\qquad t^{\mu/2} u(x,t)\le C\frac{t^{N/2}}{\log(1+t)}u(x,t)\le B.
$$
So that, we can apply the results of Lemma \ref{lemmaalpha} to $u^{k,\mu}$. Let us choose $\mu$ so close to $N$ so that $\mu p>N+2(>\mu+2)$.
By \eqref{upmu} and  Lemma \ref{lemmaalpha}, Case 2, (iii), there holds,
\begin{align*}
F(k)\int_0^\tau\int_{B_R}(u^k)^p(x,t)\,dx\,dt&\le C f(k)k^{-\mu}k^{\mu-N}
= \frac{C}{\log k}.
\end{align*}

On the other hand,

$$\begin{aligned}
\int_0^\tau\int_{B_R}u^k(x,t)\,dx\,dt&=\frac{k^{-2}}{\log k}
\int_0^{k^2\tau}\int_{B_{Rk}}
u(x,t)\,dx\,dt.
\end{aligned}$$

Now,
$$
\begin{aligned}
\int_{B_{Rk}}u(x,t)\,dx\le&e^{-t}\int_{B_{Rk}}u_0(x)\,dx+\int\int_{B_{Rk}} W(y,t)u_0(x-y)\,dx\,dy\\
=&e^{-t}\int_{B_{Rk}}u_0(x)\,dx+\int_{|y|<2Rk}\int_{B_{Rk}} W(y,t)u_0(x-y)\,dx\,dy\\
+&\int_{|y|>2Rk}\int_{B_{Rk}} W(y,t)u_0(x-y)\,dx\,dy
\le C e^{-t}\log(Rk)+I+II.
\end{aligned}
$$

There holds,
$$
I\le \int_{|y|<2Rk}W(y,t)\int_{|x|<3Rk}u_0(x)\,dx\,dy\le C\log(3Rk).
$$

On the other hand,
$$
\begin{aligned}
II\le &\int_{|y|>2Rk}W(y,t)\int_{|x|<Rk}\frac{C}{|y|^N}\,dx\,dy\\
\le & C(Rk)^N\int_{|y|>2Rk}\frac t{|y|^{2N+2}}\,dy\le C_K k^{-2}\,t.
\end{aligned}
$$

Thus,
$$
\begin{aligned}
\int_0^\tau \int_{B_R}u^k(x,t)\,dx\,dt\le & C \frac{\log(Rk)}{\log k}k^{-2}\int_0^{k^2\tau}e^{-t}\,dt\\
+& C\frac{\log(3Rk)}{\log k} k^{-2}k^2\tau
+ C_K k^{-4}\int_0^{k^2\tau}t\,dt\\
\le & C_R \tau\qquad\mbox{if }\tau<1.
\end{aligned}
$$
\end{proof}

With similar computations we can prove
\begin{lema} Assume $|x|^N(\log |x|) u_0(x)\to A>0$ as $|x|\to\infty$. Let $p>1+\frac2N$. Then,  for every $R>0$ there exists $C_R$ such that
\begin{align}\label{N-log}
&\int_0^\tau\int_{B_R}u^k(x,t)\,dx\,dt\le C_K\tau\\
F(k)&\int_0^\tau\int_{B_R}(u^k)^p(x,t)\,dx\,dt\le \frac C{\log\log k}.
\end{align}
\end{lema}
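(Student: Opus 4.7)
The plan is to mimic the structure of the previous lemma (for $|x|^N u_0 \to A$) almost verbatim, replacing the logarithmic correction $1/\log k$ coming from $f(k) \sim k^N/\log k$ by the iterated logarithmic correction $1/\log\log k$ coming from $f(k) \sim k^N/\log\log k$ (Example 6).

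For the estimate on $F(k)\int_0^\tau\int_{B_R}(u^k)^p$, I would introduce the auxiliary rescaling $u^{k,\mu}$ defined in \eqref{ukmu} with $\mu<N$ to be chosen. From Proposition \ref{barrier-uL} and Remark \ref{rmkulequL} we have $f(|x|) u(x,t) \le C$ and $f(t^{1/2})u(x,t) \le C$; since $f(k) \sim k^N/\log\log k$, this yields, for any $\mu<N$, bounds of the form $(1+|x|)^\mu u(x,t) \le B_\mu$ and $(1+t)^{\mu/2}u(x,t) \le B_\mu$ (the logarithm in the denominator is harmless for $\mu$ strictly below $N$). Since $p > 1 + 2/N$, I can choose $\mu$ so close to $N$ that $\mu p > N+2$, so Lemma \ref{lemmaalpha} applies to $u^{k,\mu}$ in its Case 2(iii), giving
$$
k^{-\mu(p-1)+2}\int_0^\tau\!\int_{B_R}(u^{k,\mu})^p\,dx\,dt \le C\,k^{\mu-N}.
$$
Invoking the identity \eqref{upmu} then yields
$$
F(k)\int_0^\tau\!\int_{B_R}(u^k)^p\,dx\,dt \le C\,f(k)\,k^{-N} \le \frac{C}{\log\log k},
$$
as required.

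For the $L^1$-type bound on $u^k$, I would start from $u\le u_L = e^{-t}u_0 + W*u_0$, change variables as in \eqref{ualpha} to reduce to estimating $f(k)k^{-N-2}\int_0^{k^2\tau}\!\int_{B_{Rk}} u(x,t)\,dx\,dt$, and split $W*u_0$ using $|y|<2Rk$ vs.\ $|y|\ge 2Rk$ exactly as in the previous lemma. The pure $e^{-t}u_0$ contribution is estimated using $\int_{B_{Rk}} u_0 \le C\log\log(Rk)$, which comes from the pointwise decay $u_0(z) \le C/(|z|^N \log|z|)$ and the fact that $\int_1^{Rk} r^{-1}/\log r\,dr \sim \log\log(Rk)$. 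The inner part (small $|y|$) uses $\int W(y,t)\,dy \le 1$ together with the same $\log\log$ bound on $\int_{B_{3Rk}}u_0$. The tail part (large $|y|$) uses Theorem \ref{teo-barrier-W} to bound $W(y,t) \le Ct/|y|^{N+2}$ combined with $u_0(x-y) \le C/(|y|^N\log|y|)$ valid because $|x-y|$ is comparable to $|y|$ on this region; this produces an extra decay $1/(Rk)^2\log(Rk)$ that makes the corresponding time integral harmless.

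Collecting the three contributions and multiplying by the prefactor $f(k)k^{-N-2}\sim k^{-2}/\log\log k$, the dominant term is $C f(k) k^{-N}\log\log(Rk)\cdot\tau \sim C\,\log\log(Rk)/\log\log k\cdot \tau$, which is bounded by $C_R\tau$ for $k$ large. The main bookkeeping issue — and the only real obstacle — is tracking these iterated logarithmic factors carefully so that one verifies that the ratio $\log\log(Rk)/\log\log k$ stays bounded as $k\to\infty$ (uniformly in $R$ for $R$ fixed), rather than adjusting constants mechanically as in the $\log$ case of the previous lemma.
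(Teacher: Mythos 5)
Your proposal is correct and follows essentially the same route as the paper, which proves this lemma by the "similar computations" to the preceding one ($|x|^N u_0\to A$): the $L^p$ bound via \eqref{upmu} and Lemma \ref{lemmaalpha}, Case 2(iii), applied to $u^{k,\mu}$ with $\mu<N$ chosen so that $\mu p>N+2$, giving $F(k)\int(u^k)^p\le C f(k)k^{-N}=C/\log\log k$; and the $L^1$ bound via the decomposition $e^{-t}u_0+W*u_0$ with the split $|y|<2Rk$ versus $|y|\ge 2Rk$, where now $\int_{B_{Rk}}u_0\sim\log\log(Rk)$. The bookkeeping point you flag is harmless since $\log\log(Rk)/\log\log k\to 1$ for fixed $R$.
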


Also,
\begin{lema} Assume $\frac{|x|^N}{\log |x|} u_0(x)\to A>0$ as $|x|\to\infty$. Let $p>1+\frac2N$. Then,  for every $R>0$ there exists $C_R$ such that
\begin{align}\label{N/log}
&\int_0^\tau\int_{B_R}u^k(x,t)\,dx\,dt\le C_K\tau\\
F(k)&\int_0^\tau\int_{B_R}(u^k)^p(x,t)\,dx\,dt\le \frac C{\log^2 k}.
\end{align}
\end{lema}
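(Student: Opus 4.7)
The plan is to mimic the proofs of the two preceding lemmas, exploiting the fact that the decay assumption on $u_0$ and the definition $f(k)\sim k^N/\log^2 k$ propagate to pointwise bounds on $u$ via Proposition~\ref{barrier-uL} and Remark~\ref{rmkulequL}. First I would note that these give
$$
u(x,t)\le C\,\frac{\log(1+|x|)}{(1+|x|)^N},\qquad u(x,t)\le C\,\frac{\log(1+t)}{(1+t)^{N/2}}.
$$
To handle the $L^p$ term, choose $0<\mu<N$ so close to $N$ that $\mu p>N+2$; the logarithmic factor is then absorbed and
$$
(1+|x|)^{\mu}u(x,t)\le B,\qquad (1+t)^{\mu/2}u(x,t)\le B
$$
for an appropriate constant. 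Thus $u^{k,\mu}(x,t)=k^\mu u(kx,k^2t)$ fits the hypotheses of Lemma~\ref{lemmaalpha}, and Case 2(iii) there (where $N-\mu p<-2$) yields $\int_0^\tau\int_{B_R}(u^{k,\mu})^p\le Ck^{\mu-N}$. Combining with the identity \eqref{upmu},
$$
F(k)\int_0^\tau\int_{B_R}(u^k)^p\,dx\,dt=f(k)\,k^{-\mu}k^{-\mu(p-1)+2}\int_0^\tau\int_{B_R}(u^{k,\mu})^p\,dx\,dt\le C\,\frac{f(k)}{k^N}\sim\frac{C}{\log^2 k},
$$
which is the bound claimed in \eqref{N/log}.

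For the $L^1$ estimate I would follow the decomposition used in the $|x|^N u_0\to A$ case. Write $u=u_L\le e^{-t}u_0+W*u_0$ and integrate over $B_{Rk}$:
$$
\int_{B_{Rk}}u(x,t)\,dx\le e^{-t}\!\int_{B_{Rk}}u_0+\!\!\int_{|y|<2Rk}\!\!\int_{B_{Rk}}W(y,t)u_0(x-y)\,dx\,dy+\!\!\int_{|y|>2Rk}\!\!\int_{B_{Rk}}W(y,t)u_0(x-y)\,dx\,dy.
$$
Since $u_0(x)\le C\log|x|/|x|^N$ for large $|x|$, a radial integration gives $\int_{B_{Rk}}u_0\le C\log^2(Rk)$, which controls the first term and (since $\|W(\cdot,t)\|_{L^1}\le 1$) also the middle term by $C\log^2(3Rk)$. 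For the tail term I use Theorem~\ref{teo-barrier-W}, $W(y,t)\le Ct/|y|^{N+2}$: since $|x|<Rk$ and $|y|>2Rk$ imply $|x-y|\sim|y|$,
$$
\int_{|y|>2Rk}W(y,t)\int_{B_{Rk}}u_0(x-y)\,dx\,dy\le C(Rk)^N t\!\int_{|y|>2Rk}\!\frac{\log|y|}{|y|^{2N+2}}\,dy\le C_R\,k^{-2}t\log k.
$$

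Rescaling back yields
$$
\int_0^\tau\int_{B_R}u^k\,dx\,dt=\frac{1}{k^2\log^2 k}\int_0^{k^2\tau}\int_{B_{Rk}}u(x,t)\,dx\,dt\le \frac{C_R}{k^2}+C_R\tau+\frac{C_R\tau^2\log k}{\log^2 k}\le C_R\tau
$$
for $\tau\le 1$ and $k$ large, giving \eqref{N/log}. The only delicate point is the choice of $\mu$: one must take $\mu$ strictly less than $N$ yet large enough to ensure $\mu p>N+2$, which is possible because $p>1+2/N$; this is the same trick as in the two previous lemmas and constitutes the step most prone to bookkeeping errors, but there is no new analytic obstacle.
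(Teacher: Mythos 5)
Your proof is correct and is exactly the ``similar computations'' the paper has in mind: it transplants the argument of the $|x|^{N}u_0\to A$ lemma, using $\mu<N$ with $\mu p>N+2$ together with Lemma \ref{lemmaalpha}, Case 2(iii) and identity \eqref{upmu} for the $L^p$ bound, and the barrier $W\le Ct/|x|^{N+2}$ for the $L^1$ bound, with $f(k)\sim k^N/\log^2k$ producing the extra power of $\log$. The only (immaterial) slip is that Proposition \ref{barrier-uL} gives $u(x,t)\le C\log^2(1+|x|)/(1+|x|)^{N}$ rather than a single logarithm, but any power of $\log$ is absorbed once you pass to the exponent $\mu<N$.
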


In a similar way we obtain,
\begin{lema} Assume $ \frac{|x|^\alpha}{\log|x|} u_0(x)\to A>0$
as $|x|\to\infty$ with $0<\alpha<N$. Let $p>1+\frac2\alpha$. Then, for every $R>0$ there exists $C_R$ such that, for $k\ge \tau^{-1/2}$,
\begin{align*}
&\int_0^\tau \int_{B_R}u^k(x,t)\,dx\,dt\le C_R\tau\\
F(k)&\int_0^\tau\int_{B_R} (u^k)^p(x,t)\,dx\,dt\le C_R \tau\quad\mbox{if }\ N-\alpha p\ge0\\
F(k)&\int_0^\tau\int_{B_R} (u^k)^p(x,t)\,dx\,dt\le C \tau^{\frac{N-\mu p+2}2}
\quad\mbox{for a certain }\mu\in(0,\alpha)\;\mbox{if }\ 0>N-\alpha p\ge-2\\
F(k)&\int_0^\tau\int_{B_R} (u^k)^p(x,t)\,dx\,dt\le C_R\tau^{\frac{N-\alpha}2}\quad \mbox{if }\ N-\alpha p<-2.
\end{align*}
\end{lema}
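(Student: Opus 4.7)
The proof imitates the preceding Kamin--Ughi-type lemmas: introduce the auxiliary rescaling $u^{k,\mu}(x,t)=k^\mu u(kx,k^2 t)$ and exploit the identity \eqref{upmu} to transfer bounds on $u^{k,\mu}$ (for which Lemma~\ref{lemmaalpha} is available) into bounds on $F(k)\int(u^k)^p$. Since $f(k)\sim k^\alpha/\log k$, Proposition~\ref{barrier-uL} together with Remark~\ref{rmkulequL} gives $\frac{(1+|x|)^\alpha}{\log(1+|x|)}u(x,t)\le C$ and the analogous estimate in time. Hence, for \emph{any} $0<\mu<\alpha$ the logarithm is absorbed and one has $(1+|x|)^\mu u(x,t)\le C$ and $(1+t)^{\mu/2}u(x,t)\le C$, so Lemma~\ref{lemmaalpha} can be applied to $u^{k,\mu}$. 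From \eqref{upmu} and $f(k)\le C k^\alpha/\log k$,
\[
F(k)\int_0^\tau\!\!\int_{B_R}(u^k)^p\,dx\,dt\;\le\;\frac{C\,k^{\alpha-\mu p+2}}{\log k}\int_0^\tau\!\!\int_{B_R}(u^{k,\mu})^p\,dx\,dt.
\]

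For the $L^1$ estimate I would use the direct computation \eqref{ualpha} together with $u(y,s)\le C\log(1+|y|)/(1+|y|)^\alpha$: this produces an overall factor $\log(Rk)/\log k$ that is uniformly bounded, giving $\int_0^\tau\!\!\int_{B_R}u^k\le C_R\tau$. For the case $N-\alpha p\ge 0$ I would again proceed directly: $\int_{B_{Rk}}u^p\,dy\le C_R(Rk)^{N-\alpha p}(\log Rk)^p$ when $\alpha p<N$ (and $\le C(\log Rk)^{p+1}$ when $\alpha p=N$); combined with $F(k)\int(u^k)^p=(f(k)/k^N)\int_0^{k^2\tau}\!\int_{B_{Rk}}u^p$ and $f(k)\sim k^\alpha/\log k$ this gives a coefficient of the form $k^{\alpha(1-p)+2}(\log k)^{p-1}\tau$, which is uniformly bounded because $p>1+2/\alpha$ forces $\alpha(1-p)+2<0$, so a polynomial in $k$ beats a power of $\log k$.

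For $-2\le N-\alpha p<0$ the idea is to pick $\mu<\alpha$ with $\mu p\in(\max(\alpha+2,N),\,N+2)$; this interval is non-empty since $p>1+2/\alpha$ gives $(\alpha+2)/p<\alpha$, while $N<\alpha p\le N+2$ gives $\alpha\in(N/p,(N+2)/p]$. Then Case~2(i) of Lemma~\ref{lemmaalpha} applied to $u^{k,\mu}$ yields $\int(u^{k,\mu})^p\le C\tau^{(N-\mu p+2)/2}$, and the requirement $\mu p\ge \alpha+2$ is exactly what bounds the prefactor $k^{\alpha-\mu p+2}/\log k$, delivering $F(k)\int(u^k)^p\le C\tau^{(N-\mu p+2)/2}$. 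For $N-\alpha p<-2$ I would take $\mu\in((N+2)/p,\alpha)$, which is non-empty in this range; Case~2(iii) of Lemma~\ref{lemmaalpha} gives $\int(u^{k,\mu})^p\le C k^{\mu p-N-2}$, whence $F(k)\int(u^k)^p\le C k^{\alpha-N}/\log k$, and the assumption $k\ge\tau^{-1/2}$ converts $k^{\alpha-N}$ into $\tau^{(N-\alpha)/2}$.

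The main obstacle is the middle case $0>N-\alpha p\ge -2$: the parameter $\mu$ must simultaneously obey $\mu<\alpha$ (so that the logarithm in the a priori decay can be absorbed), $\mu p\ge \alpha+2$ (so that the $k^{\alpha-\mu p+2}/\log k$ prefactor arising from $f$ stays bounded), and $N<\mu p<N+2$ (so that one lands in the subcase of Lemma~\ref{lemmaalpha} that produces the announced power of $\tau$). Verifying the compatibility of these three conditions is the delicate point: it uses both $p>1+2/\alpha$ and the assumed range of $\alpha p$, and it is the reason why the statement has to say ``for a certain $\mu\in(0,\alpha)$'' rather than prescribing $\mu$ explicitly.
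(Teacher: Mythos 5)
Your proposal is correct and follows essentially the same route as the paper: the auxiliary rescaling $u^{k,\mu}$ with $\mu<\alpha$ chosen so that the logarithm is absorbed, the transfer identity \eqref{upmu}, and the case-by-case application of Lemma~\ref{lemmaalpha} with exactly the paper's constraints on $\mu p$ (namely $\mu p>\alpha+2$ together with the appropriate position of $\mu p$ relative to $N$ and $N+2$). The only deviation is the case $N-\alpha p\ge0$, which you handle by a direct computation of $\int_{B_{Rk}}u^p$ with the logarithmic weight instead of choosing $\mu p\in(\alpha+2,N)$; both arguments work and yield the same bound $C_R\tau$.
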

\begin{proof} In this case $f(k)=\frac{k^\alpha}{\log(1+k)}$. We will chose $\mu\in(0,\alpha)$ according to the relative sizes of $\alpha$ and $p$. For any such $\mu$ there holds that $|x|^\mu u(x,t)\le C \frac{|x|^\alpha}{\log (1+|x|)} u(x,t)\le B$ if $|x|\ge 1$, $t^{\mu/2}u(x,t)\le C\frac{t^{\alpha/2}}{\log(1+t)}u(x,t)$ if $t\ge1$. So that, we can apply the results of Lemma \ref{lemmaalpha} to $u^{k,\mu}$.

Recall that $\alpha p>\alpha+2$. We consider several cases.

\noindent{\tt Case 1}: $N-\alpha p\ge0$

 Chose $\mu\in (0,\alpha)$ so that $\alpha+2<\mu p<N$. Then, by \eqref{upmu} and Lemma \ref{lemmaalpha}, Case (i),
    \begin{align*}
    F(k)\int_0^\tau\int_{B_R} (u^k)^p(x,t)\,dx\,dt\le C_R f(k) k^{-\mu}k^{-\mu(p-1)+2} \tau=C_R\frac{k^{\alpha+2-\mu p}}{\log k}\,\tau\le C_R\tau.
    \end{align*}

    \noindent{\tt Case 2}: $N-\alpha p<0$
    \medskip

    \begin{enumerate}
    \item[(i)] $-2\le N-\alpha p<0$. Chose $\mu\in (0,\alpha)$ so that
    $-2<N-\mu p<0$ and $\mu p>\alpha+2$. Then, by \eqref{upmu} and Lemma \ref{lemmaalpha}, Case 2, (ii),
    \begin{align*}
    F(k)\int_0^\tau\int_{B_R} (u^k)^p(x,t)\,dx\,dt&\le C_R f(k) k^{-\mu}k^{-\mu(p-1)+2}\,\tau^{\frac{N-\mu p+2}2}\\
    &=C_R\frac{k^{\alpha+2-\mu p}}{\log k}\tau^{\frac{N-\mu p+2}2}\le C_R \tau^{\frac{N-\mu p+2}2}.
    \end{align*}

    \item[(ii)] $N-\alpha p<-2$. Choose $\mu\in(0,\alpha)$ so that $N-\mu p<-2$. Then, by \eqref{upmu} and Lemma \ref{lemmaalpha}, Case 2, (iii),
      \begin{align*}
    F(k)\int_0^\tau\int_{B_R} (u^k)^p(x,t)\,dx\,dt&\le C f(k) k^{-\mu }k^{\mu-N}=C\frac{k^{\alpha-N}}{\log k}\le C\tau^{\frac{N-\alpha}2}.
    \end{align*}

    \end{enumerate}
    On the other hand, computations similar to those in \eqref{ualpha} give
    $$
    \int_0^\tau \int_{B_R}u^k(x,t)\,dx\,dt\le C_R\tau.
    $$
     This time we use that
    $$
    \int_{B_{Rk}}\frac{dx}{(2+|x|)^\alpha\log(2+|x|)}\sim (Rk)^N (f(Rk))^{-1} \sim{(Rk)^{N-\alpha}}{\log(Rk)}\qquad\mbox{as }k\to\infty.
    $$
    \end{proof}

    With similar computations we can prove,
    \begin{lema}\label{alpha-log-super} Assume $ {|x|^\alpha}(\log|x|)u_0(x)\to A>0$
as $|x|\to\infty$ with $0<\alpha<N$. Let $p>1+\frac2\alpha$. Then, for every $R>0$ there exists $C_R$ such that, for $k\ge\tau^{-1/2}$,
\begin{align*}
&\int_0^\tau \int_{B_R}u^k(x,t)\,dx\,dt\le C_R\tau\\
F(k)&\int_0^\tau\int_{B_R} (u^k)^p(x,t)\,dx\,dt\le C_R \tau\quad\mbox{if }\ N-\alpha p\ge0\\
F(k)&\int_0^\tau\int_{B_R} (u^k)^p(x,t)\,dx\,dt\le C \tau^{\frac{N-\mu p+2}2}
\quad\mbox{for a certain }\mu\in(0,\alpha)\;\mbox{if }\ 0>N-\alpha p\ge-2\\
F(k)&\int_0^\tau\int_{B_R} (u^k)^p(x,t)\,dx\,dt\le C_Rk^{\alpha-N}\log k\le C_R\tau^{\frac{N-\alpha}4}\quad \mbox{if }\ N-\alpha p<-2.
\end{align*}
\end{lema}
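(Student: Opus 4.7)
The plan is to follow the Kamin--Ughi rescaling strategy already employed in this section: introduce $u^{k,\mu}(x,t) = k^\mu u(kx,k^2t)$ for an auxiliary exponent $\mu \in (0,\alpha)$ chosen case by case, verify that $u$ satisfies the decay hypotheses of Lemma \ref{lemmaalpha} with $\alpha$ replaced by $\mu$, apply that lemma to $u^{k,\mu}$, and transfer the estimate back to $u^k$ through the identity \eqref{upmu}. Since $|x|^\alpha(\log|x|)u_0(x) \to A$ we have $f(k) \sim k^\alpha \log k$, so Proposition \ref{barrier-uL} together with Remark \ref{rmkulequL} yields $(1+|x|)^\alpha \log(2+|x|)\, u(x,t) \le C$ and the analogous time bound. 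For any $\mu \in (0,\alpha)$ these imply $(1+|x|)^\mu u \le C$ and $(1+t)^{\mu/2} u \le C$, so the hypotheses of Lemma \ref{lemmaalpha} with exponent $\mu$ are in force, and \eqref{upmu} reads
$$F(k)\int_0^\tau \int_{B_R} (u^k)^p\,dx\,dt = k^{\alpha+2-\mu p}(\log k) \int_0^\tau \int_{B_R} (u^{k,\mu})^p\,dx\,dt.$$

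Since $p > 1 + 2/\alpha$ gives $\alpha p > \alpha + 2$, $\mu$ can always be chosen close enough to $\alpha$ that $\mu p > \alpha + 2$, making the prefactor $k^{\alpha+2-\mu p}\log k$ tend to $0$. In Case 1 ($N - \alpha p \ge 0$), I would choose $\mu$ with $\alpha+2 < \mu p < N$ and apply Case 1 of Lemma \ref{lemmaalpha} to $u^{k,\mu}$, giving $\int (u^{k,\mu})^p \le C_R\tau$ and hence $F(k)\int(u^k)^p \le C_R\tau$. In Case 2 ($0 > N - \alpha p \ge -2$), I would choose $\mu$ with $\max(N/p,(\alpha+2)/p) < \mu < \min((N+2)/p,\alpha)$ and apply Case 2(ii) of Lemma \ref{lemmaalpha} to get $\int (u^{k,\mu})^p \le C\tau^{(N-\mu p+2)/2}$; multiplying by the vanishing prefactor produces the desired bound. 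In Case 3 ($N - \alpha p < -2$), I would pick $\mu \in (\max((N+2)/p,(\alpha+2)/p),\alpha)$ and apply Case 2(iii) of Lemma \ref{lemmaalpha}, obtaining $\int(u^{k,\mu})^p \le C k^{\mu p - N - 2}$ and therefore $F(k)\int(u^k)^p \le C k^{\alpha-N}\log k$. The $L^1$ bound on $u^k$ is independent of this case split and follows directly from $u(x,t) \le C/(|x|^\alpha \log|x|)$, which yields $\int_{B_{Rk}} u \sim (Rk)^{N-\alpha}/\log(Rk)$; the $\log(Rk)$ there cancels the $\log k$ in $f(k)$.

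The main (mild) technical obstacle is the last inequality $Ck^{\alpha-N}\log k \le C_R \tau^{(N-\alpha)/4}$ of Case 3. The hypothesis $k \ge \tau^{-1/2}$ alone gives only $k^{\alpha-N} \le \tau^{(N-\alpha)/2}$, so the spurious $\log k$ forces us to sacrifice half the $\tau$-power. Writing $k^{\alpha-N}\log k = k^{(\alpha-N)/2}\cdot\bigl(k^{(\alpha-N)/2}\log k\bigr)$, the second factor is bounded for $k \ge 1$ since $(\alpha-N)/2 < 0$, while the first is dominated by $\tau^{(N-\alpha)/4}$; this accounts for the unusual exponent $(N-\alpha)/4$ appearing in the statement.
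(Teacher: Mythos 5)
Your proposal is correct and follows exactly the route the paper intends: the paper proves the companion lemma for $\frac{|x|^\alpha}{\log|x|}u_0\to A$ via the auxiliary rescaling $u^{k,\mu}$, identity \eqref{upmu} and Lemma \ref{lemmaalpha}, and then asserts the present lemma ``with similar computations''; your case-by-case choices of $\mu$ (in particular requiring $\mu p>\alpha+2$ so that the prefactor $k^{\alpha+2-\mu p}\log k$ stays bounded) and your explanation of the exponent $\frac{N-\alpha}{4}$ as the price of absorbing the extra $\log k$ are precisely the needed adaptations. The only cosmetic slip is that the bound $\tau^{(N-\mu p+2)/2}$ comes from sub-case (i) of Case 2 of Lemma \ref{lemmaalpha} ($-2<N-\mu p<0$), not sub-case (ii) --- a mislabel the paper itself also makes.
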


In order to consider the critical case $p=1+2/\alpha$ we need to perform different computations. There holds,
\begin{lema} Assume $ {|x|^\alpha}(\log|x|)u_0(x)\to A>0$
as $|x|\to\infty$ with $0<\alpha<N$. Let $p=1+\frac2\alpha$. Then, for every $\tau,R>0$ there exists $C_R$ such that,
\begin{align*}
&\int_0^\tau \int_{B_R}u^k(x,t)\,dx\,dt\le C_R\tau
\end{align*}
and
$$
\lim_{k\to\infty}F(k)\int_0^\tau\int_{B_R} (u^k)^p(x,t)\,dx\,dt=0.
$$
\end{lema}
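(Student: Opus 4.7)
The plan is to mirror the arguments of Lemmas \ref{lemmaalpha} and \ref{alpha-log-super}, but to exploit the extra $\log|x|$ factor in the decay of $u_0$ in order to upgrade a merely bounded estimate to one that tends to $0$. The first step is to record the right barrier. Since $f(k)=k^\alpha\log k$ here, Proposition \ref{barrier-uL} combined with Remark \ref{rmkulequL} gives $|x|^\alpha(\log|x|)\,u(x,t)\le C$ for $|x|\ge 2$ and $t^{\alpha/2}(\log t)\,u(x,t)\le C$ for $t\ge 2$, and these combine to the uniform estimate
$$u(y,s)\le \frac{C}{(1+s^{1/2}+|y|)^{\alpha}\,\log(2+s^{1/2}+|y|)}\qquad\forall (y,s)\in\R^N\times[0,\infty).$$

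For the $L^1$ bound I would proceed exactly as in Lemma \ref{lemmaalpha}: change variables $y=kx$, $s=k^2t$, use the barrier to bound $\int_{B_{Rk}} u(y,s)\,dy\le C(Rk)^{N-\alpha}/\log(Rk)$ uniformly in $s$, and conclude with the prefactor $f(k) k^{-N-2}\cdot k^2\tau=k^{\alpha-N}\tau\log k$, which cancels the polynomial growth and leaves the harmless ratio $\log k/\log(Rk)$, yielding $\le C_R\tau$.

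The $L^p$ statement rests on the identity $F(k)f(k)^p=f(k)k^2$. After the same change of variables,
$$F(k)\int_0^\tau\!\!\int_{B_R}(u^k)^p(x,t)\,dx\,dt= k^{\alpha-N}(\log k)\int_0^{k^2\tau}\!\!\int_{B_{Rk}}u^p(y,s)\,dy\,ds.$$
Because $p=1+2/\alpha$ implies $\alpha p=\alpha+2$, raising the barrier to the $p$-th power gives
$$u^p(y,s)\le \frac{C}{(1+s^{1/2}+|y|)^{\alpha+2}\,(\log(2+s^{1/2}+|y|))^{p}},$$
and the extra factor $(\log)^{-p}$ in the denominator is exactly what forces the limit to be $0$ rather than a positive constant. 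I would then split the $(y,s)$-domain according to whether $s^{1/2}$ or $|y|$ dominates, and further according to whether $s\le R^2k^2$; a direct polar computation, using $\log(2+s^{1/2}+|y|)\ge c\log k$ in the bulk of the domain of integration, yields in every sub-case a bound of the form $C_R\,\tau^{\gamma}\,k^{N-\alpha}/(\log k)^p$ for some $\gamma\ge 0$. Multiplying by $k^{\alpha-N}\log k$ leaves $C_R\,\tau^{\gamma}/(\log k)^{p-1}$, which tends to $0$ since $p>1$. The residual contribution from the corner $\{s\le 1,\,|y|\le 2\}$ is trivially $O(1)$ and vanishes after the factor $k^{\alpha-N}\log k$ since $\alpha<N$.

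The main obstacle is the case analysis in the last step: one must distinguish whether $N-\alpha-2$ is positive, zero, or negative (and in the first case whether $s^{1/2}\le Rk$ or not), because the radial integral $\int r^{N-\alpha-3}/(\log r)^p\,dr$ has qualitatively different behavior in each regime. However, the powers of $k$ always cancel thanks to the critical relation $\alpha p=\alpha+2$, so only the logarithmic factor survives and no new difficulty beyond bookkeeping appears; the whole proof is, in spirit, a log-weighted refinement of Lemma \ref{lemmaalpha}.
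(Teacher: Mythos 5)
Your strategy is the same as the paper's: use the barriers $f(|x|)u\le C$, $f(t^{1/2})u\le C$ from Proposition \ref{barrier-uL} and Remark \ref{rmkulequL}, exploit the critical relation $\alpha p=\alpha+2$ so that all powers of $k$ cancel after multiplying by $F(k)f(k)^pk^{-N-2}=f(k)k^{-N}=k^{\alpha-N}\log k$, and let the surviving factor $(\log k)^{-(p-1)}$ drive the limit to zero; the paper likewise splits into the three cases according to the sign of $N-\alpha p$, the only cosmetic difference being that it integrates first in space and then evaluates the time integral by L'H\^opital's rule rather than by a joint polar computation. Your $L^1$ estimate and your treatment of the cases $N-\alpha p\ne0$ are fine.

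One warning, though: your closing claim that ``no new difficulty beyond bookkeeping appears'' is too optimistic in the borderline sub-case $N-\alpha p=0$ (i.e.\ $N=\alpha+2$). There the radial integral is $\int_{\sqrt s}^{Rk}\frac{dr}{r\log^p(2+r)}$, and if you bound it by its value with the upper limit sent to infinity, namely $C\log^{1-p}(2+\sqrt s)$, the subsequent time integration gives $Ck^2\tau/\log^{p-1}k$, and after multiplying by $k^{-2}\log k$ you are left with $C\tau\,(\log k)^{2-p}$, which does \emph{not} tend to zero when $p\le 2$ (that is, when $\alpha\ge2$, $N\ge4$). The point is that for $\sqrt s$ comparable to $k$ --- which is exactly where the time integral concentrates its mass --- the two endpoint evaluations nearly cancel:
$$\int_{\sqrt s}^{Rk}\frac{dr}{r\log^p(2+r)}=\frac{1}{p-1}\Bigl(\frac{1}{\log^{p-1}(2+\sqrt s)}-\frac{1}{\log^{p-1}(2+Rk)}\Bigr)\le \frac{C}{\log^{p}(2+\sqrt s)}\log\frac{2+Rk}{2+\sqrt s},$$
and it is this sharper bound (retaining the upper cutoff) that the paper carries through its L'H\^opital computation. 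Your headline device --- pulling out the uniform factor $(\log k)^{-p}$ over the bulk $\{s^{1/2}+|y|\ge k^{\ep}\}$ \emph{before} doing the power integral $\int\int(1+s^{1/2}+|y|)^{-N}\,dy\,ds\le C_{R,\tau}k^2\tau$ --- also avoids the trap; just make sure you do that, and do not first reduce to a bound in $s$ alone that has forgotten the cutoff $Rk$.
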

\begin{proof} The $L^1$ estimate follows as in the previous lemmas. For the $L^p$ estimate we consider several cases.

\noindent{\tt Case 1:} $N-\alpha p>0$. There holds,
$$
\int_{B_{Rk}}u^p(x,t)\,dx\le C\int_{B_{Rk}}\frac{dx}{(2+|x|)^{\alpha p}\log^p(2+|x|)}\le C\int_0^{Rk}\frac{(2+r)^{N-\alpha p-1}}{\log^p(2+r)}\,dr
$$
where the last integral goes to infinity as $k$ goes to infinity. Moreover,
\begin{eqnarray*}
\lim_{k\to\infty}\frac{\int_0^{Rk}\frac{(2+r)^{N-\alpha p-1}}{\log^p(2+r)}\,dr}{\frac{k^{N-\alpha p}}{\log^p k}}&=&\lim_{k\to\infty}\frac{\frac{R(2+Rk)^{N-\alpha p-1}}{\log^p(2+Rk)}}{
\frac {(N-\alpha p)k^{N-\alpha p-1}}{\log^p k}-\frac{p\,k^{N-\alpha p-1}}{\log^{p+1} k}}\\
&=&\lim_{k\to\infty}
\frac{(2+Rk)^{N-\alpha p-1}}{k^{N-\alpha p-1}}\frac{\log^p
  k}{\log^p(2+Rk)}\frac R{N-\alpha p-\frac p{\log k}}\\
&= &C_{N,R,\alpha,p}<\infty.
\end{eqnarray*}
Therefore, for $k$ large,
\begin{eqnarray*}
F(k)\int_0^\tau\int_{B_R}(u^k)^p(x,t)\,dx\,dt&\le &C\tau k^{\alpha+2-N}\log k\int_0^{Rk}\frac{(2+r)^{N-\alpha p-1}}{\log^p(2+r)}\,dr\\
&\le& C\tau k^{\alpha+2-N}\log k\frac{k^{N-\alpha p}}{\log^p k}\\
&=&C\tau
\frac{k^{-\alpha(p-1)+2}}{\log^{p-1}k}=C\tau\frac1{\log^{p-1}k}\to 0\quad\mbox{as }k\to\infty.
\end{eqnarray*}

\medskip

\noindent{\tt Case 2:} $N-\alpha p< 0$.
There holds,

\begin{eqnarray*}
\int_{B_{Rk}}u^p(x,t)\,dx&\le& C\int_{|x|\le \sqrt
  t}\frac{dx}{(2+t^{1/2})^{\alpha p}\log^p(2+t^{1/2})}+\\
& & \hspace{3em}+C\int_{\sqrt t\le |x|\le Rk}\frac{dx}{(2+|x|)^{\alpha p}\log^p(2+|x|)}\\
&\le& C\frac{(2+t^{1/2})^{N-\alpha p}}{\log^p(2+t^{1/2})}
+C\int_{\sqrt t}^{Rk}\frac{(2+r)^{N-\alpha p-1}}{\log^p(2+r)}\,dr\\
&\le&
C\frac{(2+t^{1/2})^{N-\alpha p}}{\log^p(2+t^{1/2})}
+C\frac1{\log^p(2+t^{1/2})}\int_{\sqrt t}^{Rk}(2+r)^{N-\alpha
  p-1}\,{dr}\\
&\le& C \frac{(2+t^{1/2})^{N-\alpha p}}{\log^{p}(2+t^{1/2})}
\end{eqnarray*}

Since $\alpha p= \alpha+2$, there holds that $N-\alpha p=N-\alpha-2>-2$. We have,
\begin{align*}
F(k)\int_0^\tau\int_{B_R}(u^k)^p(x,t)\,dx\,dt&\le C k^{\alpha-N}\log k\int_0^{k^2 \tau}\frac{(2+t^{1/2})^{N-\alpha p}}{\log^{p}(2+t^{1/2})}\,dt,
\end{align*}
and this last integral goes to infinity as $k$ goes to infinity. Thus,
\begin{align*}
\lim_{k\to\infty}F(k)&\int_0^\tau\int_{B_R}(u^k)^p(x,t)\,dx\,dt\le \lim_{k\to\infty}
\frac{C\int_0^{k^2 \tau}\frac{(2+t^{1/2})^{N-\alpha p}}{\log^{p}(2+t^{1/2})}\,dt}{\frac{k^{N-\alpha}}{\log k}}\\
=&\lim_{k\to\infty}\frac{\frac{C2k\tau(2+k\tau^{1/2})^{N-\alpha p}}{\log^p(2+k\tau^{1/2})}}{\frac{(N-\alpha)k^{N-\alpha-1}}{\log k}-\frac{k^{N-\alpha-1}}{\log^2 k}}\\
=&\lim_{k\to\infty}
\frac{C2k\tau(2+k\tau^{1/2})^{N-\alpha-2}}{k^{N-\alpha-1}}\frac{\log k}{\log^p(2+k\tau^{1/2})}\frac1{N-\alpha-\frac1{\log k}}=0
\end{align*}

\medskip

\noindent{\tt Case 3:} $N-\alpha p=0$. We begin as in Case 2.

\begin{eqnarray*}
\int_{B_{Rk}}u^p(x,t)\,dx&\le& C\int_{|x|\le \sqrt t}\frac{dx}{(2+t^{1/2})^{\alpha p}\log^p(2+t^{1/2})}+\\
&&\hspace{3em}+C\int_{\sqrt t\le |x|\le Rk}\frac{dx}{(2+|x|)^{\alpha p}\log^p(2+|x|)}\\
&\le& C\frac{1}{\log^p(2+t^{1/2})}
+C\int_{\sqrt t}^{Rk}\frac{(2+r)^{-1}}{\log^p(2+r)}\,dr\\
&\le&
C\frac{1}{\log^{p}(2+t^{1/2})}\Big(1+\log\frac{2+Rk}{2+t^{1/2}}\Big)
\end{eqnarray*}

Thus,
\begin{align*}
F(k)\int_0^\tau\int_{B_R}(u^k)^p(x,t)\,dx\,dt&\le C k^{\alpha-N}\log k\int_0^{k^2 \tau}\frac{1}{\log^{p}(2+t^{1/2})}\Big(1+\log\frac{2+Rk}{2+t^{1/2}}\Big)\,dt,
\end{align*}
and this last integral goes to infinity as $k$ goes to infinity. Therefore, since $N-\alpha-1=1$,
\begin{align*}
\lim_{k\to\infty}F(k)&\int_0^\tau\int_{B_R}(u^k)^p(x,t)\,dx\,dt\le \lim_{k\to\infty}
\frac{C\int_0^{k^2 \tau}\frac{1}{\log^{p}(2+t^{1/2})}\big(1+\log\frac{2+Rk}{2+t^{1/2}}\big)\,dt}{\frac{k^{N-\alpha}}{\log k}}\\
=&\lim_{k\to\infty}\frac{\frac{C2k\tau}{\log^{p}(2+k\tau^{1/2})}\big(1+\log\frac{2+Rk}{2+k\tau^{1/2}}\big)}{\frac{(N-\alpha)k^{N-\alpha-1}}{\log k}-\frac{k^{N-\alpha-1}}{\log^2 k}}\\
=&\lim_{k\to\infty}
{2C\tau}\frac{\log k}{\log^{p}(2+k\tau^{1/2})}\Big(1+\log\frac{2+Rk}{2+k\tau^{1/2}}\Big)\frac1{N-\alpha-\frac1{\log k}}=0
\end{align*}
\end{proof}

\bigskip

If $u_0\in L^1(\R^N)$, the limit $U$ of $u^k$ does not satisfy that $U(x,0)=\lim u_0^k=M_0\delta$ with $M_0=\int u_0$. Therefore, estimates as the ones we have just stated do not hold. In order to establish our result we need the  following lemma.
\begin{lema}\label{compact} Let $u_0\in L^\infty(\R^N)$. Assume
  further that
$|x|^{N+2}u_0(x)\le B$. Let $u$ be the solution to \eqref{eq3} and $u^k$ its rescaling. Assume $u^{k_n}\to U$  with $k_n\to\infty$ as $n\to\infty$. Then, there exists a constant $C>0$ such that
$$
U(x,t)\le C\frac t{|x|^{N+2}}.
$$

In particular, for every $\mu>0$,
$$
\lim_{t\to0}U(x,t)=0 \qquad\mbox{uniformly in }\ |x|
\ge \mu.
$$
\end{lema}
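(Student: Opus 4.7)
\noindent{\bf Proof Plan.} Since $u_0\ge0$, Remark~\ref{rmkulequL} gives $u(x,t)\le u_L(x,t)$, and hence
$$
u^k(x,t)\le f(k)\,u_L(kx,k^2t)=f(k)e^{-k^2t}u_0(kx)+f(k)\int W(kx-y,k^2t)\,u_0(y)\,dy.
$$
The hypothesis $|x|^{N+2}u_0(x)\le B$ forces $u_0\in L^1(\R^N)$, so in the present setting $f(k)\sim k^N/\|u_0\|_1$ and in particular $f(k)\le C_0 k^N$ for $k$ large. The plan is to estimate each of the two terms above uniformly for $x\ne 0$ and $t>0$, and then pass to the limit along $k_n\to\infty$.

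For the exponential piece, using $u_0(kx)\le B/(k|x|)^{N+2}$ together with $f(k)\le C_0 k^N$ we get
$$
f(k)e^{-k^2t}u_0(kx)\le \frac{C_0 B\,e^{-k^2t}}{k^2\,|x|^{N+2}},
$$
which tends to $0$ as $k\to\infty$ for every fixed $x\ne 0$, $t>0$. For the convolution piece I would split $\R^N$ into the regions $\{|y|\le |kx|/2\}$ and $\{|y|>|kx|/2\}$. On the first region $|kx-y|\ge |kx|/2$, so Theorem~\ref{teo-barrier-W} gives
$$
W(kx-y,k^2t)\le \frac{C\,k^2 t}{|kx-y|^{N+2}}\le \frac{C\,2^{N+2}\,t}{k^N|x|^{N+2}},
$$
and multiplying by $f(k)$ and integrating against $u_0$ controls this contribution by $C\,t/|x|^{N+2}$, where the constant is uniform in $k$ because $f(k)\|u_0\|_{L^1(\R^N)}/k^N$ is bounded. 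On the second region, $u_0(y)\le B(2/|kx|)^{N+2}$, so pulling this bound out of the integral and using that $\int W(\cdot,s)\,dy=1-e^{-s}\le 1$ yields
$$
f(k)\int_{|y|>|kx|/2}W(kx-y,k^2t)\,u_0(y)\,dy\le \frac{2^{N+2}B\,f(k)}{k^{N+2}|x|^{N+2}}\le \frac{C B}{k^2|x|^{N+2}},
$$
which again vanishes as $k\to\infty$.

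Combining the three estimates, for any fixed $x\ne 0$ and $t>0$ and all $k$ sufficiently large,
$$
u^k(x,t)\le \frac{C\,t}{|x|^{N+2}}+o(1),
$$
the $o(1)$ being uniform on compact subsets of $(\R^N\setminus\{0\})\times(0,\infty)$. Taking $k=k_n\to\infty$ and using uniform convergence of $u^{k_n}$ to $U$ on compact sets yields $U(x,t)\le C\,t/|x|^{N+2}$. The final assertion is then immediate: for $|x|\ge\mu$,
$$
U(x,t)\le \frac{C\,t}{\mu^{N+2}}\longrightarrow 0\quad\text{as }t\to 0,
$$
uniformly in $|x|\ge\mu$.

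The only point that requires care is that the barrier of Theorem~\ref{teo-barrier-W} degenerates at $x=0$, so the convolution cannot be estimated by simply moving the pointwise bound inside the integral; this is precisely why the splitting at $|y|=|kx|/2$ is necessary, with the inner region handled by the barrier on $W$ and the outer region handled by the decay of $u_0$.
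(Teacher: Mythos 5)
Your argument is correct, but it reaches the key bound by a different technical route than the paper. The paper writes $u\le u_L=e^{-t}u_0+z$, where $z=W*u_0$ solves $z_t-Lz=e^{-t}(J*u_0)$ with $z(x,0)=0$, and then repeats for $z$ the supersolution construction of Theorem~\ref{teo-barrier-W}: the hypothesis $|x|^{N+2}u_0(x)\le B$ makes the source $e^{-t}(J*u_0)$ decay like $|x|^{-(N+2)}$, so $Ct/|x|^{N+2}$ is a supersolution of the problem for $z$ in $\{t\le K|x|^2,\ |x|\ge 2\}$, while the crude bounds $z\le Ct$ and $t^{N/2}z\le C$ handle the complement. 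This produces the pointwise, scaling-invariant estimate $u(x,t)\le e^{-t}u_0(x)+Ct/|x|^{N+2}$ for the \emph{unrescaled} solution, which passes to $U$ immediately under $f(k)=k^N$. You instead take the barrier on $W$ from Theorem~\ref{teo-barrier-W} as a black box and estimate the rescaled convolution directly, splitting at $|y|=|kx|/2$: the inner region inherits the barrier with a $k$-independent constant precisely because $f(k)\lesssim k^N$ and $u_0\in L^1$ (which indeed follows from $u_0\in L^\infty$ and $|x|^{N+2}u_0\le B$), while the outer region and the term $f(k)e^{-k^2t}u_0(kx)$ are $O\bigl(k^{-2}|x|^{-(N+2)}\bigr)$ and vanish uniformly on $\{|x|\ge\mu\}$ as $k\to\infty$. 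Both proofs rest on the same scaling-invariant profile $t/|x|^{N+2}$; yours avoids a second comparison-principle computation at the price of error terms that only disappear in the limit $k_n\to\infty$ (which is all the lemma requires), whereas the paper's version yields a clean bound for $u$ at every fixed time. The only bookkeeping point worth making explicit is that $f(k)\le C_0k^N$ holds for $k\ge k_0$ with $\int_{B_{k_0}}u_0>0$, which is harmless since $u_0\not\equiv0$ and $k=k_n\to\infty$.
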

\begin{proof} We proceed as in the proof of Theorem \ref{teo-barrier-W}. There holds that $u(x,t)\le e^{-t}u_0(x)+z(x,t)$ with $z$ the solution to
\begin{equation}\label{z}
\left\{\begin{aligned}
&z_t-Lz=e^{-t}(J*u_0)(x)\\
&z(x,0)=0
\end{aligned}\right.
\end{equation}

By the assumption on the growth of $u_0$ at infinity there holds that $v(x,t)=C\frac t{|x|^{N+2}}$ is a supersolution to \eqref{z} in $\frac t{|x|^2}\le K$, $|x|\ge 2$. On the other hand, $z(x,t)\le v(x,t)$ in the complement of this set since $z(x,t)\le Ct$ and $t^{N/2}z(x,t)\le C$ in this case ($u_0\in L^1(\R^N)$).

Therefore, $u(x,t)\le e^{-t} u_0(x)+ C\frac t{|x|^{N+2}}$. Rescaling, and recalling that in the present case $f(k)=k^N$ we have that  $u^k(x,t) \le e^{-k^2t}u_0^k(x)+ C\frac t{|x|^{N+2}}$. Passing to the limit as $k\to\infty$ with $x\neq0$ we get $U(x,t)\le C\frac t{|x|^{N+2}}$. And the result follows.
\end{proof}

\bigskip

\begin{prop} Let $u_0\in L^1\cap L^\infty$. Let $u$ be the solution to \eqref{eq3} and $u^k$ its rescaling. Assume for some sequence $k_n\to\infty$ there holds that $u^{k_n}\to U$ uniformly on compact sets of $\R^N\times(0,\infty)$.

Assume $p>1+2/N$. Then, $U(x,t)=M U_\a(x,t)$ where $U_\a$ is  the fundamental solution of the heat equation with diffusivity $\a$ and $M=\int u_0(x)\,dx-\int_0^\infty\int u^p(x,t)\,dx\,dt$.

When $p=1+2/N$ there holds that $U\equiv0$.
\end{prop}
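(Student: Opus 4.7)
The plan is to identify the initial trace of $U$ via the mass-balance identity for $u$ together with the pointwise decay supplied by Lemma \ref{compact}, and then invoke uniqueness for the heat equation (supercritical case) or the classical Brezis--Friedman nonexistence theorem for Dirac initial data at and above the Fujita exponent (critical case). Integrating \eqref{eq3} over $\R^N$ and using that $\int Lu\,dx=0$ by the symmetry of $J$ gives
$$M(t):=\int u(\cdot,t)\,dx = \int u_0\,dx-\int_0^t\!\int u^p\,dx\,ds;$$
for $p>1+2/N$, \cite{PR} yields $\int_0^\infty\!\int u^p\,dx\,dt<\infty$, so $M(t)\to M:=\int u_0-\int_0^\infty\!\int u^p\,dx\,dt$. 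A change of variables gives $\int u^k(\cdot,t)\,dx=f(k)k^{-N}M(k^2t)$, which under the normalization $f(k)\sim k^N$ of Example 7 tends to $M$ for every $t>0$.

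In the supercritical case $p>1+2/N$ we have $c_0=0$, so Proposition \ref{U} implies that $U$ is a bounded nonnegative solution of the heat equation $U_t=\a\Delta U$. In particular $\int U(\cdot,t)\,dx$ is independent of $t>0$, and the uniform tail estimate from Remark \ref{rmkulequL} (applied to the rescaled profiles) together with dominated convergence identifies this constant as $M$. Applying Lemma \ref{compact} (after approximating $u_0$ by data with the required fast decay, if needed) gives $U(x,t)\le Ct/|x|^{N+2}$, so $U(\cdot,t)\to 0$ uniformly on $\{|x|\ge\mu\}$ as $t\to 0^+$. Combined with $\int U(\cdot,t)\,dx=M$, this forces $U(\cdot,t)\rightharpoonup M\delta$ in the sense of measures as $t\to 0^+$, and uniqueness for the Cauchy problem for the heat equation with Dirac initial data then yields $U=MU_\a$.

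In the critical case $p=1+2/N$ we have $c_0=1$, so $U$ is a bounded nonnegative weak solution of $U_t-\a\Delta U=-U^p$. The uniform bound $\int u^k(\cdot,t)\,dx\le M_0$ together with Lemma \ref{compact} implies that every weak-$*$ limit of $U(\cdot,t)$ as $t\to 0^+$ is a finite nonnegative measure supported at $\{0\}$, hence of the form $c\delta$ for some $c\ge 0$. By the classical Brezis--Friedman theorem, no nonzero nonnegative solution of the heat equation with absorption at the critical Fujita exponent $p=1+2/N$ has initial datum $c\delta$ with $c>0$; hence $c=0$, and uniqueness for the Cauchy problem with zero initial datum forces $U\equiv 0$. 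The principal technical obstacle is making rigorous the limit $\int u^k(\cdot,t)\,dx\to M$ and the weak-$*$ convergence $U(\cdot,t)\rightharpoonup c\delta$ as $t\to 0^+$: both require uniform-in-$k$ tail control on $u^k$, which is secured by the scaling-invariant sharp estimates of Proposition \ref{barrier-uL} and of Section 2.
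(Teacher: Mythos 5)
Your supercritical argument is in substance the paper's: the mass-balance identity from integrating \eqref{eq3}, the pointwise bound $U(x,t)\le Ct/|x|^{N+2}$ from Lemma \ref{compact} to show the initial trace concentrates at the origin, identification of the total mass as $M$, and uniqueness for the heat equation with datum $M\delta$. In the critical case you diverge: you invoke the Brezis--Friedman nonexistence theorem for $\delta$ initial data at $p=1+2/N$, whereas the paper stays self-contained and more elementary. It shows that $\int U(\cdot,t)\,dx$ equals the \emph{constant} $M$ for all $t>0$ (because $\int u^k(\cdot,t)\,dx\to\int u_0-\int_0^\infty\int u^p$ for each fixed $t$, a limit independent of $t$), while the limit equation $U_t-\a\Delta U=-U^p$ forces $t\mapsto\int U(\cdot,t)\,dx$ to be strictly decreasing unless $U\equiv0$. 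Your route is legitimate but imports an external theorem and requires verifying that $U$ attains its initial trace in the sense that theorem demands; the paper's mass-constancy contradiction needs nothing beyond what is already established.

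Two steps you defer as ``technical obstacles'' are exactly where the paper spends most of its effort, and as written they are gaps. First, Lemma \ref{compact} requires $|x|^{N+2}u_0(x)\le B$, which a general $u_0\in L^1\cap L^\infty$ need not satisfy; ``approximating by data with the required fast decay, if needed'' is the content of the entire second half of the paper's proof, where $u_0$ is truncated to $u_0^n=u_0\chi_{|x|<n}$, the nonnegative difference $v_n=u-u_n$ is controlled through the variation of constants formula so that $U_n\le\liminf_k u^k\le\limsup_k u^k\le U_n+\ep$ for $n$ large, and the masses $M_n\to M$ are shown to converge. This interchange of the limits in $n$ and $k$ is not automatic and must be carried out. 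Second, the uniform-in-$k$ tail bound $\int_{|x|>K}u^k(\cdot,t)\,dx<\ep$ does not follow from Proposition \ref{barrier-uL} as you claim: with $f(k)\sim k^N$ that proposition only yields $u^k(x,t)\lesssim |x|^{-N}$, which is not integrable at infinity. The paper obtains the tail bound in \eqref{masaKc} and \eqref{3.3} from the barrier $W(x,t)\le Ct/|x|^{N+2}$ of Theorem \ref{teo-barrier-W} combined with $\int_{|x|>Kk}u_0\to0$. Both points are repairable, but the sources you cite for them do not suffice.
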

\begin{proof} As before we drop the subscript $n$. We already know that $U$ is a solution to
$$
U_t-\a\Delta U=-c_0 U^p.
$$

Assume for simplicity that $u_0$ has compact support. Then, by Lemma \ref{compact} there exists $C>0$ such that $U(x,t)\le C\frac t{|x|^{N+2}}$.
Let $M(t)=\int U(x,t)\,dx$. We will see that $M(t)\equiv M$ is constant, and  $U(x,t)\to M \delta$ as $t\to0$ in the sense of distributions where $\delta$ is the Dirac delta. In fact, assume we already proved that $M(t)$ is constant $M$. Let $\varphi\in C_0^\infty(\R^N)$. Then, given $\ep>0$, if $\mu$ is small we get,
\begin{align*}
&\big|\int U(x,t)\varphi(x)\,dx- M\varphi(0)\big|=\big|\int
U(x,t)\big[\varphi(x)-\varphi(0)\big]\,dx\big|\\
&\hskip3cm\le \int_{|x|<\mu}U(x,t)|\varphi(x)-\varphi(0)|\,dx+
C\int_{|x|>\mu}\frac t{|x|^{N+2}}|\varphi(x)-\varphi(0)|\,dx\\
&\hskip3cm\le \ep\int U(x,t)\,dx+ 2C\|\varphi\|_\infty \int_{|x|>\mu}\frac
t{|x|^{N+2}}\,dx\\
&\hskip3cm\le M\ep +\bar C_\mu \,t<(M+1)\ep\qquad\mbox{if }t\mbox{ is small}.
\end{align*}

Now, integrating the equation satisfied by $u^k$ and using the
symmetry of $J$ we obtain
\begin{eqnarray}\label{masa}
\int u^k(x,t)\,dx&=&\int u_0^k(x)\,dx-F(k)\int_0^t\int (u^k)^p(x,s)\,dx\,ds\\
\nonumber &=&\int u_0(x)-\int_0^{k^2 t}\int u^p(x,s)\,dx\, ds.
\end{eqnarray}
Moreover
\begin{equation}\label{masaK}
\int_{B_K}u^k(x,t)\,dx\to\int_{B_K} U(x,t)\,dx\quad\mbox{for every
}K>0
\end{equation}
and
\begin{equation}\label{masaKc}
\begin{aligned}
&\int_{|x|>K} u^k(x,t)\,dx\le e^{-k^2t}\int_{|x|>Kk}u_0(x)\,dx+Ct
\int_{|x|>K}\frac{dx}{|x|^{N+2}} <\ep\\
& \int_{|x|>K} U(x,t)\,dx\le Ct
\int_{|x|>K}\frac{dx}{|x|^{N+2}} <\ep
\end{aligned}
\end{equation}
if K is large.

Hence, we have that
$$M(t)=\int U(x,t)\,dx=\int u_0(x)\,dx-\int_0^\infty\int u^p(x,t)\,dx\,dt=M.
$$

When $p>1+2/N$ there holds that $c_0=0$, so that $U$ is $M$ times the fundamental  solution of the heat equation with diffusivity $\a$. On the other hand, if
$p=1+2/N$ there holds that $c_0=1$, so that
$$
\int U(x,t)\,dx=\int U(x,\tau)\,dx-\int_\tau^t\int U^p(x,s)\,dx\,ds.
$$

Therefore, $M(t)$ cannot be constant unless $U\equiv0$.
And the proposition is proved when $u_0$ has compact support.

\bigskip

Let now $u_0\in L^\infty\cap L^1$ be arbitrary.  We have to prove that $u^k$ converges to  $U=M U_\a$ uniformly on compact subsets of $\R^N\times(0,\infty)$ with $M=\int u_0(y)\,dy-\int_0^\infty\int u^p(y,t)\,dy\,dt$.

Let $u_0^n=u_0\chi_{|x|<n}$ and $u_n$ the solution to \eqref{eq3} with initial datum $u_0^n$. Then, $u_n\le u$. Let $v_n=u-u_n$. There holds that $v_n$ is a nonnegative solution to
$$
{v_n}_t-L v_n=-u^p+u_n^p\le 0.
$$

Therefore,
$$
0\le v_n(x,t)\le e^{-t}\{u_0(x)-u_0^n(x)\}+\int W(x-y,t)\{u_0(y)-u_0^n(y)\}\,dy.
$$

Thus, if $t\ge\tau>0$,
$$
\begin{aligned}
0\le v_n^k(x,t)\le &k^Ne^{-k^2 t} \{u_0(kx)-u_0^n(kx)\}+\int  W_k(x-y,t)\{u_0^k(y)-(u_0^n)^k(y)\}\,dy\\
\le& C k^Ne^{-k^2 t}+ C_\tau \int \{u_0(y)-u_0^n(y) \} \,,dy
\end{aligned}
$$

Then, if $t\ge\tau>0$  we find,

$$
\begin{aligned}
U_n(x,t)\le \liminf_{k\to\infty}u^k(x,t)&\le \limsup_{k\to
\infty}u^k(x,t)\\
&\le U_n(x,t)+C_\tau \int\{u_0(y)-u_0^n(y)\}\,dy\\
&\le U_n(x,t)+\ep\qquad\mbox{if }n\ge n_0(\tau,\ep).
\end{aligned}
$$

Therefore, if there exists $\lim_{n\to\infty}U_n$ there holds that there exists $U=\lim_{k\to\infty}u^k=\lim_{n\to\infty}U_n$. Moreover, it is easy to prove from the estimates above that this limit is uniform on compact sets of $\R^N\times(0,\infty)$.

\medskip

On the other hand, by similar arguments we find that for $m>n$,
$$
\begin{aligned}
 0\le &\int \{U_m(x,t)-U_n(x,t) \}\,dx\\
 \le &\limsup_{k\to\infty} e^{-k^2 t}\int \{u_0^m(x)-u_0^n(x)\}\,dx+  \int\{u_0^m(y)-u_0^n(y)\}\,dy \to 0\quad\mbox{as } m,n\to\infty.
\end{aligned}
$$

Therefore, $M_n=\int U_n(x,t)\,dx\to M$ and, since $U_n=M_n U_\a$, we deduce that $ U=M U_\a$.

\medskip

So, it only remains to prove that $M=\int u_0(y)\,dy-\int_0^\infty\int u^p(y,t)\,dy\,dt$.

The estimates \eqref{masa}, \eqref{masaK}  hold in the present situation and, instead of the estimate  \eqref{masaKc} on the mass of $u^k$ and $U$ outside a large ball, we get
\begin{equation}\label{3.3}
\begin{aligned}
 \int_{|x|>K}u^k(x,t)\,dx\le& \int_{|x|>Kk}u_0(x)\,dx+ \int_{|x|>K}\int_{|y|<K/2}W_k(x-y,t)u_0^k(y)\,dy\,dx\\
+& \int_{|x|>K}\int_{|y|>K/2}W_k(x-y,t)u_0^k(y)\,dy\,dx\\
\le &  \int_{|x|>Kk}u_0(x)\,dx+ \int_{|x|>K}\int_{|y|<K/2}C\frac t{|x-y|^{N+2}}u_0^k(y)\,dy\,dx\\
+&\int_{|y|>K/2}u_0^k(y) \int_{|x|>K}W_k(x-y,t)\,dx\,dy\\
\le &  \int_{|x|>Kk}u_0(x)\,dx+C t \int_{|x|>K}\frac 1{|x|^{N+2}}\int_{|y|<Kk/2}u_0(y)\,dy\,dx\\
+&\int_{|y|>Kk/2}u_0(y)\,dy <\ep\quad\mbox{if }K\mbox{is large independently of }k\ge1
\end{aligned}\end{equation}

So, that we also have
$$\int_{|x|>K} U(x,t)\,dx\le \liminf_{k\to\infty}\int_{|x|>K}  u^k(x,t)\,dx\le \ep$$ if $K$ is large. So,
we get again that
$$M=\int U(x,t)\,dx= \int u_0(x)\,dx-\int_0^\infty\int u^p(x,t)\,dx\,dt
$$
 and the proposition is proved by using again that, in the critical case, $\int U(x,t)\,dx$ cannot be constant unless $U$ is identically zero.
\end{proof}

\bigskip

\section{Main results}
\setcounter{equation}{0}
In this section we prove our main results. Namely, we establish the asymptotic behavior
of solutions to
\begin{equation}\label{eq4}
\begin{cases}
u_t(x,t)=\int J(x-y)(u(y,t)-u(x,t))\,dy - u^p(x,t)\quad & \mbox{in }\,\R^N\times(0,\infty)\\
u(x,0) =  u_0(x)& \mbox{in }\,\R^N.
\end{cases}
\end{equation}
depending on the values of $p$, $N$ and the coefficient $\alpha$ of
non-integrability of the initial data $u_0$. At the end of this
section we also address the case where $u_0$ is integrable and
bounded. As mentioned before, we
have to distinguish between several cases.

We begin with the case $0<\alpha<N$. We further divide the analysis between the
supercritical case $p>1+2/\alpha$ and the critical case
$p=1+2/\alpha$. In the previous paper \cite{TW}, we were able to
establish the asymptotic behavior only for the supercritical
case. Now we present a different proof that actually allows us to
obtain the result both in the critical and supercritical cases. Moreover, in the
supercritical case, we prove the same type of result for
 more general non-integrable initial data.

\begin{teo} Let $u_0\in L^\infty$ be such that $|x|^\alpha u_0(x)\to A>0$ as $|x|\to\infty$ with  $0<\alpha<N$.  Let $p\ge 1+2/\alpha$ and $u$  the solution to \eqref{eq4}. Then, for every $R>0$,
$$
t^{\alpha/2} |u(x,t)-U(x,t)| \to0\quad\mbox{as }t\to\infty\quad\mbox{uniformly in }|x|\le R\sqrt t
$$
where  $U$ is the solution to
\begin{equation}\label{eqUa}
\left\{\begin{aligned}
&U_t-\a\Delta U=-c_0 U^p\\
&U(x,0)=  \frac {C_{A,N}}{|x|^\alpha}
\end{aligned}\right.
\end{equation}
with $c_0=0$ if $p>1+2/\alpha$ and $c_0=1$ if $p=1+2/\alpha$.

In a similar way we get the following results: When $\frac{|x|^\alpha}{\log |x|} u_0(x)\to A>0$ as $|x|\to\infty$ with $ 0<\alpha<N$ and $p>1+2/\alpha$, for every $R>0$ there holds that
$$
{t^{\alpha/2}}\Big|\frac{ u(x,t)}{\log t^{1/2}}-U(x,t)\Big| \to0\quad\mbox{as }t\to\infty\quad\mbox{uniformly in }|x|\le R\sqrt t
$$
with $U$ as above.

When $|x|^\alpha(\log|x|)u_0(x)\to A>0$ as $|x|\to\infty$ with $0<\alpha<N$ and $p\ge 1+2/\alpha$, for every $R>0$ there holds that
$$
{t^{\alpha/2}}\Big|u(x,t)\log t^{1/2}-U(x,t)\Big| \to0\quad\mbox{as }t\to\infty\quad\mbox{uniformly in }|x|\le R\sqrt t
$$
with $U$ the solution to \eqref{eqUa} with $c_0=0$ even in the critical case $p=1+2/\alpha$.

\end{teo}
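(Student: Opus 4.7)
The plan is to apply the rescaling framework of Section 3 at the specific scale $k=\sqrt{t}$ and exploit the self-similarity of the limiting profile; the desired estimates drop out after a back-substitution.

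\textbf{Step 1 (identify the limit).} In each of the three cases I take $f$ as in Examples 1, 3, or 4 of the Introduction, so that $f(k)$ is comparable to $k^\alpha$, $k^\alpha/\log k$, or $k^\alpha\log k$ respectively, and $F(k)\to c_0$, with $c_0=0$ in every supercritical situation and $c_0=1$ only in the critical case of the first assertion. By Remark \ref{rmkulequL} combined with Proposition \ref{barrier-uL}, the family $\{u^k\}$ is uniformly bounded on $\R^N\times[t_0,\infty)$ for every $t_0>0$, and the convergence result for $\{h^k\}$ extracts a subsequence $u^{k_n}\to U$ uniformly on compact subsets of $\R^N\times(0,\infty)$. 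Using the three asymptotic hypotheses on $u_0$ together with $\log(k|x|)/\log k\to 1$, a direct computation will give
$$u_0^{k}(x)=f(k)\,u_0(kx)\longrightarrow \frac{C_{A,N}}{|x|^\alpha}\qquad\text{locally uniformly in }\R^N\setminus\{0\},$$
hence in $\mathcal D'(\R^N)$. Combining Propositions \ref{U} and \ref{determineUa} with the $L^1$ and $L^p$ bounds supplied, for each form of $u_0$, by Lemma \ref{lemmaalpha} and its analogues for the logarithmic variants, identifies $U$ as a solution of
$$U_t-\a\Delta U=-c_0\,U^p,\qquad U(x,0)=C_{A,N}|x|^{-\alpha}.$$

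\textbf{Step 2 (uniqueness and full convergence).} Every subsequential limit inherits from Proposition \ref{barrier-uL} the pointwise bound $U(x,t)\le C(t^{1/2}+|x|)^{-\alpha}$, and classical uniqueness for the heat equation with absorption in this decay class determines $U$ uniquely from its initial datum. Every subsequence of $\{u^k\}$ therefore has a sub-subsequence converging to the same limit, which upgrades the convergence to $u^k\to U$ along the full family, uniformly on compact subsets of $\R^N\times(0,\infty)$.

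\textbf{Step 3 (self-similarity and back-substitution).} Since $C_{A,N}|x|^{-\alpha}$ is homogeneous of degree $-\alpha$ and the rescaling $V(x,t)\mapsto\lambda^\alpha V(\lambda x,\lambda^2 t)$ preserves the limit PDE — either because $c_0=0$, or, in the critical case of the first assertion, because $\alpha p=\alpha+2$ — uniqueness forces the self-similar form
$$U(x,t)=t^{-\alpha/2}\Phi(x/\sqrt{t}),\qquad \Phi(x):=U(x,1).$$
Specializing the uniform convergence $u^k(\cdot,1)\to\Phi$ on $\{|x|\le R\}$ to $k=\sqrt{t}$ and substituting $y=\sqrt{t}\,x$ gives
$$f(\sqrt{t})\,u(y,t)-t^{\alpha/2}U(y,t)\longrightarrow 0\qquad\text{uniformly in }|y|\le R\sqrt{t}.$$
Dividing by $t^{\alpha/2}/f(\sqrt{t})$ — which equals $1$, $1/\log\sqrt{t}$, or $\log\sqrt{t}$ in the three cases respectively — yields the three conclusions of the theorem.

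The step I expect to be most delicate is the uniqueness assertion of Step 2 in the critical case: one must guarantee that the absorption equation with initial datum $C_{A,N}|x|^{-\alpha}$ admits a unique solution in the decay class $U(x,t)\le C(t^{1/2}+|x|)^{-\alpha}$. Once this is in place, the self-similarity and back-substitution of Step 3 are essentially automatic.
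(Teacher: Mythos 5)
Your proposal is correct and follows essentially the same route as the paper: compactness of $\{u^k\}$ from Section 3, identification of the limit via Propositions \ref{U} and \ref{determineUa} together with the $L^1$/$L^p$ bounds of Lemma \ref{lemmaalpha} and its logarithmic analogues, uniqueness to upgrade subsequential to full convergence, and the scale-invariance $U^k=U$ of the limit problem to convert $u^k(\cdot,1)\to U(\cdot,1)$ into the stated self-similar asymptotics via $k=\sqrt t$. You are in fact more explicit than the paper about the uniqueness of $U$ in its decay class, which the paper uses tacitly when asserting that the whole family converges.
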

\begin{proof} We know that the family $u^k$ is precompact in $C(\mathcal K)$ for every compact set $\mathcal K\subset \R^N\times(0,\infty)$. Therefore, for every sequence $k_n\to \infty$ there exists a subsequence that we still call $k_n$ such that $u^{k_n}$ converges uniformly on every compact subset of $ \R^N\times(0,\infty)$ to a function $U$.

On the other hand, it is easy to see that $u_0^k\to \frac {C_{A,N}}{|x|^\alpha}$ in the sense of distributions. Moreover, we are in the situation of
 Proposition \ref{determineUa}. So that, $U$ is the solution to \eqref{eqUa}. Thus, the whole family $u^k$ converges to $U$ as $k\to \infty$.
 In particular, $u^k(x,1)\to U(x,1)$ uniformly on compact sets of $\R^N$.

 As the solution of \eqref{eqUa} is invariant under the present rescaling. There holds that $U(y,1)=U^k(y,1)$. Thus, for every $R>0$,
 $$
 k^\alpha|u(ky,k^2)-U(ky,k^2)|\to0\quad\mbox{uniformly for }|y|\le R.
 $$

 By calling $x=y\sqrt t$, $t=k^2$ we get the result.

 \bigskip

 When $\frac{|x|^\alpha}{\log |x|} u_0(x)\to A>0$ as $|x|\to\infty$ with $0<\alpha<N$, it is easy to see that we still have  that $u_0^k\to  \frac {C_{A,N}}{|x|^\alpha}$ in the sense of distributions.  So the result follows also in this case.

 Analogously, when $|x|^\alpha(\log|x|)u_0(x)\to A>0$ as $|x|\to\infty$ with $0<\alpha<N$, we also have that $u_0^k\to  \frac {C_{A,N}}{|x|^\alpha}$ in the sense of distributions, and the result follows.
\end{proof}

\bigskip

We now analyze the case $\alpha=N$. Once again, we prove the result for more general non-integrable
initial data than the one considered in \cite{TW}.

\begin{teo}  Let $u_0\in L^\infty$ be such that $|x|^N u_0(x)\to A>0$ as $|x|\to\infty$. Let $p> 1+2/N$. Then, for every $R>0$,
$$
{t^{N/2}}\Big |\frac{u(x,t)}{\log t^{1/2}}-U(x,t)\Big| \to0\quad\mbox{as }t\to\infty\quad\mbox{uniformly in }|x|\le R\sqrt t
$$
where $u$ is the solution to \eqref{eq4} and $U$ is the solution to
$$
\left\{\begin{aligned}
&U_t-\a\Delta U=0\\
&U(x,0)= C_{A,N}\delta
\end{aligned}\right.
$$
with $\delta$ Dirac's delta.

In a similar way we get the following result when $\frac{|x|^N}{\log |x|} u_0(x)\to A>0$ as $|x|\to\infty$ and $p>1+2/N$: for every $R>0$,
$$
{t^{N/2}}\Big |\frac{u(x,t)}{\log^2 t^{1/2}}-U(x,t)\Big| \to0\quad\mbox{as }t\to\infty\quad\mbox{uniformly in }|x|\le R\sqrt t
$$
with $U$ as above.

When $|x|^N(\log |x|) u_0(x)\to A>0$ as $|x|\to\infty$ and $p>1+2/N$ we get: for every $R>0$,
$$
{t^{N/2}}\Big |\frac{u(x,t)}{\log\log t^{1/2}}-U(x,t)\Big| \to0\quad\mbox{as }t\to\infty\quad\mbox{uniformly in }|x|\le R\sqrt t
$$
with $U$ as above.
\end{teo}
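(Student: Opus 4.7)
The argument follows the same architecture as the previous theorem, combining the compactness of the rescaled sequence $\{u^k\}$ from Section 3 with the identification result of Proposition \ref{determineUa}. Given a sequence $k_n\to\infty$, Section 3 provides a subsequence (still denoted $k_n$) along which $u^{k_n}\to U$ uniformly on compact subsets of $\R^N\times(0,\infty)$. The goal is to show that $U=C_{A,N}U_\a$, where $U_\a$ denotes the fundamental solution of $v_t=\a\Delta v$; once this is proved for every subsequence, uniqueness of the limit forces the whole family $u^k$ to converge to this $U$, and the statement follows by undoing the rescaling.

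To apply Proposition \ref{determineUa} I would verify its three inputs. The $L^1$ and $L^p$ bounds are precisely those recorded in the lemma containing \eqref{N} for the basic case and in the lemmas containing \eqref{N-log} and \eqref{N/log} for the two variants. Since $p>1+2/N$, a direct check using $f(k)\sim k^N/\log k$ (respectively $k^N/(\log k)^2$, $k^N/\log\log k$) gives $F(k)=f(k)^{1-p}k^2\to 0$, so the PDE satisfied by the limit reduces to the pure heat equation ($c_0=0$). The crucial remaining task is the distributional convergence $u_0^k\to C_{A,N}\delta$. For $\varphi\in C_0^\infty(\R^N)$ with support in $B_R$, I would write
\[
\int u_0^k(x)\varphi(x)\,dx=\frac{1}{\int_{B_k}u_0}\int u_0(y)\varphi(y/k)\,dy,
\]
and split the $y$-integral into $\{|y|<\ep k\}$, where $\varphi(y/k)\to\varphi(0)$ uniformly, and $\{\ep k\le|y|\le Rk\}$, where the change of variables $z=y/k$ combined with $k^N u_0(kz)\to A/|z|^N$ provides a contribution of order $O(1)$. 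Because $\int_{B_k}u_0\sim A\omega_{N-1}\log k$ grows logarithmically and dominates the bounded remainder, the ratio above converges to $\varphi(0)C_{A,N}$ with an explicit constant. The two variants proceed identically once one inserts the asymptotics $\int_{B_k}u_0\sim (A\omega_{N-1}/2)(\log k)^2$ and $\int_{B_k}u_0\sim A\omega_{N-1}\log\log k$ respectively.

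With $U=C_{A,N}U_\a$ identified, I would conclude by translating back. Taking $k=\sqrt t$ and $y=x/\sqrt t$ one has $u^k(y,1)=f(\sqrt t)\,u(x,t)$, while the self-similarity $U_\a(x,t)=t^{-N/2}U_\a(x/\sqrt t,1)$ gives $U_\a(y,1)=t^{N/2}U_\a(x,t)$. In the main case $f(\sqrt t)\sim t^{N/2}/\log t^{1/2}$, so the uniform convergence $u^k(\cdot,1)\to C_{A,N}U_\a(\cdot,1)$ on compacts of $\R^N$ reads precisely as
\[
t^{N/2}\left|\frac{u(x,t)}{\log t^{1/2}}-U(x,t)\right|\to 0\quad\text{uniformly in }|x|\le R\sqrt t.
\]
The two variants follow analogously, the only change being to replace $\log t^{1/2}$ by $(\log t^{1/2})^2$ or by $\log\log t^{1/2}$, matching the respective $f$.

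The genuine difficulty is the distributional limit in the second step. The case $\alpha=N$ sits exactly at the borderline of non-integrability, so the naive change of variables $z=y/k$ produces $\int A|z|^{-N}\varphi(z)\,dz$, an integral that diverges near $z=0$. One must therefore balance the matching logarithmic divergences of the numerator and of $\int_{B_k}u_0$ carefully in order to isolate the coefficient $C_{A,N}$ and to confirm that the bounded remainders are negligible. The other ingredients -- compactness of $\{u^k\}$, identification of the limiting PDE, and the $L^1$-$L^p$ bounds on balls -- are already furnished by Sections 2 and 3.
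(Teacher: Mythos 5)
Your argument is correct and follows essentially the same route as the paper's proof: compactness of $\{u^k\}$ from Section 3, the $L^1$--$L^p$ bounds of the corresponding lemmas feeding into Proposition \ref{determineUa}, the distributional limit $u_0^k\to C_{A,N}\delta$, and the change of variables $k=\sqrt t$, $y=x/\sqrt t$ together with the self-similarity of $U_\a$. Your treatment of the borderline case $\alpha=N$ — splitting the integral at $|y|=\ep k$ so that the $O(\log(R/\ep))$ contribution from the annulus is absorbed by the logarithmically divergent normalization $\int_{B_k}u_0$ — is in fact a more careful justification of the key distributional limit than the paper's one-line remark.
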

\begin{proof} We proceed as in the previous theorem. This time, $u_0^k\to C_{A,N}\delta$ in the sense of distributions.
In fact,
$$
\int u_0^k(x)\varphi(x)\,dx= \frac1{\log k}\int_{|x|<Kk} u_0(x)\varphi(x/k)\,dx
$$
where $K$ is such that $\varphi(x)=0$ if $|x|>K$. Since $\varphi(x/k)\to\varphi(0)$ uniformly in $\R^N$ the result easily follows from this formula and the fact that
$$
 \frac1{\log K k}\int_{|x|<Kk} u_0(x)\,dx\to C_{A,N},\qquad \frac{\log Kk}{\log k}\to1 \qquad\mbox{as }k\to\infty.
 $$

The other cases follow similarly.
\end{proof}

\bigskip

Finally we consider the case where $u_0$ is
integrable and bounded. Such case has been studied by Pazoto and Rossi
in \cite{PR} for non-critical values of $p$ . Here we present
a new proof the includes the critical case and therefore settles the
question as far as integrable data is concerned.

\begin{teo} Let $u_0\in L^1\cap L^\infty$ and $p\ge 1+2/N$. Let $u$ be the solution to \eqref{eq4}.

First, assume $p>1+2/N$. Then, for every $R>0$,
$$
t^{N/2} |u(x,t)-U(x,t)| \to0\quad\mbox{as }t\to\infty\quad\mbox{uniformly in }|x|\le R\sqrt t
$$
where  $U$ is the solution to
\begin{equation}\label{eqUN}
\left\{\begin{aligned}
&U_t-\a\Delta U=0\\
&U(x,0)= M\delta
\end{aligned}\right.
\end{equation}
with $\delta$ Dirac's delta and $M=\int u_0(x)\,dx-\int_0^\infty\int u^p(x,t)\,dx\,dt$.

Now, let $p=1+2/N$. For every $R>0$ there holds that
$$
t^{N/2} u(x,t)\to0\quad\mbox{as }t\to\infty\quad\mbox{uniformly in }|x|\le R\sqrt t
$$

\end{teo}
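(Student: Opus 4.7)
The plan is to obtain this theorem as an almost immediate consequence of the convergence machinery of Section 3, combined with the self-similarity of the limiting profile. I would begin by taking $f(k)=k^N$ (as in Example 7) and working with the rescaling $u^k(x,t)=k^N u(kx,k^2t)$. Precompactness of $\{u^k\}$ in $C_{\mathrm{loc}}(\R^N\times(0,\infty))$ has already been established in Section 3 through the decomposition $u^k=v^k+h^k$ and the sharp barrier estimates on $W_k$. The final proposition of Section 3 then identifies every cluster point uniquely: if $p>1+2/N$ then $c_0=0$ and any limit equals $MU_\a$ with $M=\int u_0-\int_0^\infty\!\int u^p$, whereas if $p=1+2/N$ then $c_0=1$ and the mass identity \eqref{masa} together with the tail estimate \eqref{3.3} forces any limit to vanish identically. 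Since every subsequence of $\{u^k\}$ has a sub-subsequence converging to the same $U$, the full family converges to $U$ as $k\to\infty$, uniformly on compact subsets of $\R^N\times(0,\infty)$.

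The decisive step is to translate this time-$1$ convergence into the claimed asymptotic statement. In the supercritical case the limit
$$U(x,t)=M(4\pi\a t)^{-N/2}e^{-|x|^2/(4\a t)}$$
is itself invariant under the parabolic rescaling $v(x,t)\mapsto k^N v(kx,k^2t)$, so $U^k\equiv U$ for every $k>0$. Uniform convergence of $u^k(\cdot,1)$ to $U(\cdot,1)$ on $\{|y|\le R\}$ then reads
$$k^N\bigl|u(ky,k^2)-U(ky,k^2)\bigr|\longrightarrow 0\quad\text{uniformly on }|y|\le R.$$
Setting $t=k^2$ and $x=ky$, the constraint $|y|\le R$ becomes $|x|\le R\sqrt t$, and the display becomes precisely
$$t^{N/2}\bigl|u(x,t)-U(x,t)\bigr|\to 0\quad\text{uniformly in }|x|\le R\sqrt t,$$
which is the first assertion. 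For the critical case, $U\equiv 0$, so $U^k\equiv 0$ trivially and the same change of variables applied to $u^k(y,1)\to 0$ yields $t^{N/2}u(x,t)\to 0$ uniformly in $|x|\le R\sqrt t$.

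The main obstacle is thus concentrated entirely in the inputs from Sections 2 and 3. The present argument is essentially bookkeeping: rescale, invoke the already-established convergence, and exploit self-similarity. The one detail that warrants care is the upgrade from subsequential convergence to convergence of the entire family; this relies on the uniqueness of the cluster point provided by the proposition just cited, which is precisely why the identification of $U$ (either as $MU_\a$ or as the zero function) is the indispensable ingredient.
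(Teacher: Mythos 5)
Your proposal is correct and follows essentially the same route as the paper: invoke the compactness and limit-identification results of Section 3 (with $f(k)=k^N$), upgrade subsequential convergence to full convergence via uniqueness of the cluster point, and use the invariance of $MU_\a$ (or of $0$) under the parabolic rescaling together with the substitution $t=k^2$, $x=ky$ to convert convergence at time $1$ into the stated large-time asymptotics.
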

\begin{proof}
As in the previous theorems we know that $u^k(y,1)\to U(y,1)$ uniformly on compact sets of $\R^N\times(0,\infty)$ as $k\to\infty$.
In the present situation, if $p>1+2/N$,  we know that  $U$ is the solution to \eqref{eqUN}. In particular, $U$ is invariant under the present rescaling. When $p=1+2/N$ we know that $U\equiv0$.
So, we get the result as in the previous theorems.
\end{proof}

\bigskip

\begin{rem} The same method allows to study the asymptotic behavior of the solution $u_L$ of the equation without absorption.
\end{rem}

\end{document}